\def\tr{\mathop{\text{tr}}\kern.2ex}
\def\P{{\mathbb P}}
\def\E{{\mathbb E}}
\def\Z{{\mathbb Z}}
\def\supp{\mathop{\text{supp}}}
\long\def\comment#1{}
\def\tr{\mathop{\text{Tr}}}
\def\ind{\mathds{1}}
\let\hat\widehat
\let\tilde\widetilde
\newcommand{\bs}{\bm{s}}
\newcommand{\bu}{\bm{u}}
\newcommand{\bv}{\bm{v}}
\newcommand{\bw}{\bm{w}}
\newcommand{\Db}{\mathbf{D}}
\newcommand{\Ib}{\mathbf{I}}
\newcommand{\Mb}{\mathbf{M}}
\newcommand{\Ob}{\mathbf{O}}
\newcommand{\bG}{\bm{G}}
\newcommand{\bR}{\bm{R}}
\newcommand{\bU}{\bm{U}}
\newcommand{\bX}{\bm{X}}
\newcommand{\bY}{\bm{Y}}
\newcommand{\bZ}{\bm{Z}}
\newcommand{\cB}{\mathcal{B}}
\newcommand{\cF}{\mathcal{F}}
\newcommand{\cG}{\mathcal{G}}
\newcommand{\cM}{\mathcal{M}}
\newcommand{\cQ}{\mathcal{Q}}
\newcommand{\cR}{\mathcal{R}}
\newcommand{\cS}{{\mathcal{S}}}
\newcommand{\cT}{{\mathcal{T}}}
\newcommand{\bSigma}{\bm{\Sigma}}
\newcommand*{\zero}{{\bm 0}}
\numberwithin{equation}{section}
\theoremstyle{plain}
\newtheorem{lemma}{Lemma}[section]
\newtheorem{theorem}{Theorem}[section]
\newtheorem{assumption}{Assumption}[section]
\newtheorem{remark}{Remark}[section]
\newcommand{\norm}[1]{\|#1\|}
\providecommand{\norm}[1]{\vvvert#1\vvvert}
\newcommand{\bel}{\begin{eqnarray}\label}
\newcommand{\eel}{\end{eqnarray}}
\newcommand{\bes}{\begin{eqnarray*}}
\newcommand{\ees}{\end{eqnarray*}}
\def\reals{{\mathbb{R}}}
\def\T{{\sf T}}
\let\bar\overline
\begin{document}

\begin{frontmatter}

\title{On Gaussian comparison inequality and its application to spectral analysis of large random matrices}

\runtitle{Gaussian Comparison Inequality and Random Matrix Theory}

\begin{aug}

\author{\fnms{Fang} \snm{Han}\thanksref{t1}\ead[label=e1]{fanghan@uw.edu}},
\author{\fnms{Sheng} \snm{Xu}\thanksref{t3}\ead[label=e2]{sheng.xu@yale.edu}},
\and
\author{\fnms{Wen-Xin} \snm{Zhou}\thanksref{t4}\ead[label=e3]{stevewenxin@hotmail.com}}

\address[t1]{Department of Statistics, University of Washington, Seattle, WA 98195, USA. \printead{e1}}
\address[t3]{Department of Statistics, Yale University, New Haven, CT 06511, USA. \printead{e2}}
\address[t4]{Department of Mathematics, University of California, San Diego, La Jolla, CA 92093, USA. \printead{e3}}

\runauthor{F. Han, S. Xu, and W.-X. Zhou}

\affiliation{University of Washington, Johns Hopkins University, and Princeton University}

\end{aug}

\begin{abstract}
Recently, Chernozhukov, Chetverikov, and Kato [{\em Ann. Statist.} {\bf 42} (2014) 1564--1597] developed a new Gaussian comparison inequality for approximating the suprema of empirical processes. This paper exploits this technique to devise sharp inference on spectra of large random matrices. In particular, we show that two long-standing problems in random matrix theory can be solved: (i) simple bootstrap inference on sample eigenvalues when true eigenvalues are tied; (ii) conducting two-sample Roy's covariance test in high dimensions. To establish the asymptotic results, a generalized $\epsilon$-net argument regarding the matrix rescaled spectral norm and several new empirical process bounds are developed and of independent interest.
\end{abstract}

\begin{keyword}
\kwd{Gaussian comparison inequality}
\kwd{extreme value theory}
\kwd{random matrix theory}
\kwd{Roy's largest root test}
\kwd{spectral analysis}
\end{keyword}


\end{frontmatter}

\section{Introduction}\label{sec:introduction}

Spectral analysis of large random matrices plays {an important} role in multivariate statistical estimation and testing problems. For example, variances of the principal components are functions of covariance eigenvalues \citep{muirhead2009aspects}, and Roy's largest root test statistic is the spectral distance between the sample covariance and its population counterpart \citep{roy1957some}. 

Asymptotic behaviors of sample covariance eigenvalues have been extensively studied in the literature. When the dimension $d$ is small and the population eigenvalues are distinct, \cite{anderson1963asymptotic} and \cite{waternaux1976asymptotic} proved the asymptotic normality for sample eigenvalues. \cite{fujikoshi1980asymptotic} established the Edgeworth expansion and showed that the convergence rate is of order $O(n^{-1/2} )$ under various settings when $d$ is fixed. For non-Gaussian data,  \cite{waternaux1976asymptotic} and \cite{fujikoshi1980asymptotic} illustrated the effects of skewness and kurtosis on the limiting distribution.

When $d$ is large, \cite{johnstone2001distribution} revealed for Gaussian data that the largest sample eigenvalue, after proper standardization, follows the Tracy-Widom law asymptotically \citep{tracy1996orthogonal}. \cite{johnstone2008multivariate} further proved that the convergence rate to the Tracy-Widom law is of order $O(d^{-2/3})$, which is astonishingly fast. Despite these elegant properties, existing results rely heavily on some simple Gaussian or sub-Gaussian assumptions \citep{peche2009universality,pillai2012edge,bao2015universality}. Their applications to hypothesis testing and constructing confidence intervals under more general settings are largely unknown.

Motivated by the covariance testing problem, the major focus of this paper is to study asymptotic behaviors of a particular type of spectral statistics related to the covariance matrix. Here we are interested in the non-Gaussian setting with the dimension $d$ allowed to grow with the sample size $n$. Specifically, let $\bX_1,\ldots,\bX_n$ be $n$ independent realizations of a $d$-dimensional random vector $\bX$ with mean $\zero$ and covariance matrix $\bSigma\in\reals^{d\times d}$. Denote the sample covariance matrix by $\hat\bSigma = n^{-1} \sum_{i=1}^n \bX_i\bX_i^\T $. We shall derive the limiting distribution and establish bootstrap confidence intervals for the following statistic
\begin{align}\label{eq:statistics}
\hat{Q}_{\max} :=\sup_{\norm{\bv}_2\leq 1, \,\norm{\bv}_0\leq s} \bigg|\frac{\sqrt{n} \, \bv^\T(\hat\bSigma-\bSigma)\bv}{\bv^\T\bSigma\bv} \bigg|,
\end{align}
where $1\leq s\leq d$ is a prespecified integer-valued parameter representing the ``degree of sparsity". The statistic $\hat{Q}_{\max}$ is of general and strong practical interest. By setting $s=d$, it reduces  to the conventional Roy's test statistic $\sqrt{n}\norm{\bSigma^{-1/2}\hat\bSigma\bSigma^{-1/2}-\Ib_d}_2$, where $\norm{\Mb}_2$ denotes the spectral norm of $\Mb$. If $s\leq d-1$, we obtain a generalized version of Roy's test statistic, allowing us to deal with large covariance matrices\footnote{The techniques built in this paper can also be exploited to study the non-normalized version of $\hat{Q}_{\max}$, i.e., $\sup_{\norm{\bv}_2\leq 1,\norm{\bv}_0\leq s}|\bv^\T(\hat\bSigma-\bSigma)\bv|$. We defer to Section \ref{sec:main} for more details.}. 

To study the limiting behavior of $\hat{Q}_{\max}$ in high dimensions, a major insight is to build the connection between the analysis of the maximum eigenvalue and recent developments in extreme value theory. In particular, by viewing the maximum eigenvalue as the extreme value of a specific infinite-state stochastic process, the Gaussian comparison inequality recently developed in \cite{chernozhukov2014gaussian} can be used. New empirical process bounds are established to ensure the validity of the inference procedure. In the end, bootstrap inference follows.

Two interesting observations are discovered. First, in the low-dimensional regime ($d/n \to 0$), the results in this paper solve a long standing question on bootstrap inference of eigenvalues when multiple roots exist \citep{beran1985bootstrap,eaton1991wielandt}. The $m$-out-of-$n$ bootstrap \citep{hall1993inconsistency} is known to be rather sensitive to the choice of $m$. In comparison, the multiplier-bootstrap-based inference procedure used in this paper does not involve any tuning parameter, and is fairly accurate in approximating the distribution of the test statistic. Secondly, it is well-known that Roy's largest root test is optimal against rank-one alternatives \citep{kritchman2009non}. Previously it was unclear whether such a result could be extended to high dimensional settings. This paper demonstrates that such a generalization can be made.

\subsection{Notation}\label{sec:notation}

Throughout the paper, let $\reals$ and $\Z$ denote the sets of real numbers and integers. Let $\mathds{1}(\cdot)$ be the indicator function. Let $\bv=(v_1,\ldots,v_d)^{\T}$ and $\Mb= ( \Mb_{jk} ) \in\reals^{d\times d}$ be a $d$ dimensional real vector and a $d\times d$ real matrix. For sets $I,J \subset \{1,\ldots,d\}$, let $\bv_I$ be the subvector of $\bv$ with entries indexed by $I$, and $\Mb_{I,J}$ be the submatrix of $\Mb$ with entries indexed by $I$ and $J$. We define the vector $\ell_0$ and $\ell_2$ (pseudo-)norms of $\bv$ to be $\norm{\bv}_0 =\sum_j \mathds{1}(v_j\ne 0)$ and $\norm{\bv}_2=\big( \sum_{j=1}^d |v_j |^2 \big)^{1/2}$. We define the matrix spectral ($\ell_2$) norm as $\norm{\Mb}_2= \max_{\bv} \norm{\Mb\bv}_2/\norm{\bv}_2$. For every real symmetric matrix $\Mb$, we define $\lambda_{\max}(\Mb)$ and $\lambda_{\min}(\Mb)$ to be its largest and smallest eigenvalues. For any integer $1\leq s\leq d$ and real symmetric matrix $\Mb$, we define the $s$-sparse smallest and largest eigenvalues of $\Mb$ to be
\[
\lambda_{\min, s}(\Mb )=\inf_{\bv\in\mathbb{V}(s,d)}\bv^{\T}\Mb\bv~~{\rm and}~~\lambda_{\max,s}(\Mb )=\sup_{\bv\in\mathbb{V}(s,d)}\bv^{\T}\Mb\bv,
\] 
where
\begin{align} \label{cdt0}
\mathbb{V}(s,d):= \big\{\bv\in\reals^d: \norm{\bv}_2=1,\norm{\bv}_0\leq s \big\}
\end{align}
is the set of all $s$-sparse vectors on the $d$-dimensional sphere $\mathbb{S}^{d-1}$.
Moreover, we write $\gamma_s(\Mb)=\sqrt{\lambda_{\max,s}(\Mb )/\lambda_{\min,s }(\Mb )}$ for any positive definite matrix $\Mb$. For any $\bv\in\reals^d$ and positive definite real-valued matrix $\Mb$, we write
\[
\|\bv\|_{\Mb}=(\bv^{\T}\Mb\bv)^{1/2}~~{\rm and}~~\bv_{\Mb}=\bv/\|\bv\|_{\Mb}.
\]

For any random vectors $\bX,\bY\in \reals^d$, we write $\bX\stackrel{{\sf d}}{=}\bY$ if $\bX$ and $\bY$ are identically distributed. We use $c, C$ to denote absolute positive constants, which may take different values at each occurrence. For any two real sequences $\{a_n\}$ and $\{b_n\}$, we write $a_n \lesssim b_n$, $a_n=O(b_n)$, or equivalently $b_n \gtrsim a_n$, if there exists an absolute constant $C$ such that $|a_n|\leq C|b_n|$ for any large enough $n$. We write $a_n \asymp b_n$ if both $a_n \lesssim b_n$ and $a_n\gtrsim b_n$ hold. We write $a_n=o(b_n)$ if for any absolute constant $C$, we have $|a_n|\leq C|b_n|$ for any large enough $n$. We write $a_n=O_{\mathbb{P}}(b_n)$ and $a_n=o_{\mathbb{P}}(b_n)$ if $a_n=O(b_n)$ and $a_n=o(b_n)$ hold stochastically. For arbitrary positive integer $n$, we write $[n]=\{a\in\Z: 1\leq a\leq n\}$. For any set $\mathbb{A}$, denote by $|\mathbb{A}|$ its cardinality and $\supp(\mathbb{A})$ its support. For any $a,b\in\reals$, we write $a\vee b=\max(a, b)$.

\subsection{Structure of the paper}

The rest of this paper is organized as follows. Section \ref{sec:main} provides the main results and technical tools involved. Sections \ref{sec:app1} and \ref{sec:app2} give two applications of the main results. In particular, Section \ref{sec:app1} discusses the application to bootstrap inference of largest and smallest eigenvalues for spherical distributions. Section \ref{sec:app2} extends the main results to conduct the two-sample Roy's largest root test. {In Section \ref{sec:discussion}, we conclude the paper with a short discussion}. {The technical proofs are relegated to Section \ref{sec:proof}}.

\section{Main results}\label{sec:main}

Let $\bX_1,\ldots,\bX_n$ be independent and identically distributed (i.i.d.) realizations of $\bX\in\reals^d$ with mean $\zero$ and covariance matrix $\bSigma$, and let $\hat\bSigma$ be the sample covariance matrix. Define $\hat Q_{\bv}$ and $\hat Q_{\max}$ to be the normalized rank-one projection and normalized $s$-sparse largest singular value of $\hat\bSigma-\bSigma$, given respectively by
\[
\hat Q_{ \bv} = \frac{\sqrt{n} \, \bv^\T(\hat\bSigma-\bSigma)\bv}{\bv^\T\bSigma\bv}~~{\rm and}~~\hat Q_{\max}  =\sup_{\bv\in\mathbb{V}(s,d)}|\hat Q_{ \bv}|.
\]
We aim to derive the limiting distribution of $\hat Q_{\max}$. Of note, when setting $s=d$ and assuming the positive definiteness of $\bSigma$, we have
\[
\hat Q_{\max} =\sqrt{n} \, \norm{\bSigma^{-1/2}\hat\bSigma\bSigma^{-1/2}-\Ib_d}_2,
\]
which coincides with Roy's largest root test statistic \citep{roy1957some,johnstone2013roy}. The statistic $\hat Q_{\max}$ is of strong practical interest. We shall discuss in Sections~\ref{sec:app1} and \ref{sec:app2} two applications based on its limiting properties stated below.

To derive the limiting distribution of $\hat Q_{\max}$, we impose the following two assumptions.
\begin{assumption}\label{cdt1}
There exists a random vector $\bU\in\reals^d$ satisfying $\E ( \bU )=\zero$ and $\E( \bU\bU^{\T} )=\Ib_d$, such that
\[
\bX=\bSigma^{1/2 }\bU~~{\rm and}~~K_1:=\sup_{\bv\in\mathbb{S}^{d-1}}\|\bv^{\T}\bU\|_{\psi_2}<\infty.
\]
Here $\norm{\cdot}_{\psi_2}$ stands for the standard Orlicz norm with respect to the function $\psi_2(x) :=\exp(x^2)-1$ \citep{van1996weak}.
\end{assumption}
\begin{assumption}\label{cdt2}
$\{\bX_i\}_{i=1}^n$ are independent realizations of $\bX$.
\end{assumption}

Assumptions~\ref{cdt1} and \ref{cdt2} are mild and are regularly imposed in the literature. Note that the sub-Gaussian condition in Assumption \ref{cdt1} can be easily relaxed at the cost of a more stringent scaling constraint on $(n,d,s)$ \citep{cai2013two}. Assumption \ref{cdt2} can also be slightly relaxed. Such relaxations are beyond the scope of this paper, and we will not pursue the details here.

With Assumptions~\ref{cdt1} and \ref{cdt2} satisfied, the following theorem gives a Gaussian comparison result regarding the limiting distribution of $\hat{Q}_{\max}$. Below, for an arbitrary set $\mathbb{A}$ equipped with a metric $\rho$, we call $\mathbb{N}_{\epsilon}$ an $\epsilon$-net of $\mathbb{A}$ if for every $a\in \mathbb{A}$ there exists some $a' \in \mathbb{N}_{\epsilon}$ such that $\rho(a , a' )\leq \epsilon $. For each $s \in [d]$, with slight abuse of notation, we write $\gamma_s  = \gamma_s (\bSigma)$ for simplicity. By Lemma~\ref{lem:discret} in Section~\ref{sec:proof}, for any $\epsilon\in (0,1)$, there exists an $\epsilon$-net $\mathbb{N}^0_{\epsilon }=\{\bv_j:j=1,\ldots,p_{\epsilon}^0 \}$ of $\mathbb{V}(s,d)$ equipped with the Euclidean metric, with its cardinality $p_\epsilon^0 =|\mathbb{N}^0_{\epsilon }|$ satisfying $\log p^0_\epsilon \lesssim s\log(ed/\epsilon  s)$.

\begin{theorem}\label{thm:limiting_dist}
Let Assumptions \ref{cdt1} and \ref{cdt2} be satisfied and put $\epsilon_1 =(n\gamma_s)^{-1}$. Then, for any $\epsilon_1$-net $\mathbb{N}_{\epsilon_1}$ of $\mathbb{V}(s,d)$ with cardinality $p_{\epsilon_1} =|\mathbb{N}_{\epsilon_1}|$, there exists a $p_{\epsilon_1}$-dimensional centered Gaussian random vector $(G_1,\ldots,G_{p_{\epsilon_1}})^{\T}$ satisfying $\E ( G_j G_k )=\E( R_{j} R_{k} )$ for $1\leq j, k \leq p_{\epsilon_1}$ with $R_{j} = \bv_j^{\T}(\bX\bX^{\T}-\bSigma)\bv_j /\bv_j^{\T}\bSigma\bv_j$, such that 
\begin{align}\label{eq:21}
\sup_{t\geq0}\bigg|\P( \hat{Q}_{\max}  \leq t)-\P\bigg(\max_{1\leq j\leq p_{\epsilon_1} }|G_{j}|\leq t\bigg)\bigg|\leq CK_2^2  \frac{\{   \gamma_n(s,d) \vee \log  p_{\epsilon_1} \} ^{9/8}}{n^{1/8}} ,
\end{align}
where $C>0$ is an absolute constant, $K_2 := K_1^2+1$, and $\gamma_n(s,d) := s\log (\gamma_s \cdot ed/s) \vee s\log n$.
\end{theorem}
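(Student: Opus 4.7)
The plan is to prove the comparison bound in two stages: first discretize the $s$-sparse sphere $\mathbb{V}(s,d)$ via the $\epsilon_1$-net $\mathbb{N}_{\epsilon_1}$ at scale $\epsilon_1=(n\gamma_s)^{-1}$, reducing $\hat Q_{\max}$ to the finite maximum $M_n := \max_{j \le p_{\epsilon_1}} \bigl| n^{-1/2} \sum_{i=1}^n R_{ij}\bigr|$ with $R_{ij}:=\bv_j^{\T}(\bX_i\bX_i^{\T}-\bSigma)\bv_j/\bv_j^{\T}\bSigma \bv_j$; then invoke the Chernozhukov--Chetverikov--Kato (CCK) Gaussian comparison inequality for maxima of sums of independent sub-exponential random vectors applied to $M_n$.

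For the discretization, given $\bv \in \mathbb{V}(s,d)$ with nearest net point $\bv' \in \mathbb{N}_{\epsilon_1}$, the difference $\bv-\bv'$ is $2s$-sparse, and the identity
\[
\hat Q_{\bv} - \hat Q_{\bv'} = \frac{\sqrt n\,\bigl[\bv^{\T}(\hat\bSigma-\bSigma)\bv - (\bv')^{\T}(\hat\bSigma-\bSigma)\bv'\bigr]}{\bv^{\T}\bSigma\bv} + \hat Q_{\bv'}\cdot\frac{(\bv')^{\T}\bSigma\bv' - \bv^{\T}\bSigma\bv}{\bv^{\T}\bSigma\bv}
\]
combined with $\lambda_{\min,s}(\bSigma)\le \bv^{\T}\bSigma\bv \le \lambda_{\max,s}(\bSigma)$ and the polarization identity $\bv^{\T}A\bv - (\bv')^{\T}A(\bv') = (\bv-\bv')^{\T}A(\bv+\bv')$ for symmetric $A$, yields a bound of order $\epsilon_1\gamma_s\sqrt n\,\sup_{\bw\in\mathbb{V}(2s,d)}|\bw^{\T}(\hat\bSigma-\bSigma)\bw|/\lambda_{\max,s}(\bSigma) + \epsilon_1\gamma_s^{2}\hat Q_{\max}$. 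The first summand is controlled by a standard sub-Gaussian tail bound for the $2s$-sparse operator norm of $\hat\bSigma-\bSigma$ (an instance of the new empirical process estimates advertised in the abstract); the second is negligible once $\epsilon_1\gamma_s^2 = \gamma_s/n$ is multiplied by the in-probability order of $\hat Q_{\max}$.

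For the Gaussian comparison, note that each $R_{ij}$ is centered, independent across $i$, and uniformly sub-exponential: Assumption~\ref{cdt1} gives $\bv_j^{\T}\bU_i = \bv_j^{\T}\bSigma^{-1/2}\bX_i/\|\bv_j\|_{\bSigma^{-1}}^{-1}$-type scaling so that $\|R_{ij}\|_{\psi_1}\lesssim K_1^{2}\lesssim K_2$, uniformly in $j$, precisely because the denominator $\bv_j^{\T}\bSigma\bv_j$ matches the sub-Gaussian norm of the quadratic form in the numerator. The CCK comparison then produces a centered Gaussian vector $(G_j)_{j\le p_{\epsilon_1}}$ with the prescribed covariance and yields a Kolmogorov-distance bound of order $K_2^{2}(\log p_{\epsilon_1})^{7/8} n^{-1/8}$ between $M_n$ and $\max_{j}|G_j|$. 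To pass from an event-wise closeness of $\hat Q_{\max}$ and $M_n$ to a Kolmogorov-distance bound, one invokes the Gaussian anti-concentration of $\max_j|G_j|$ (Nazarov-type), which costs an extra $\sqrt{\log p_{\epsilon_1}}$ factor. Tracking the contributions $\log p_{\epsilon_1}\lesssim s\log(ed/s)+s\log(n\gamma_s)$ that enter through the net cardinality and through $\epsilon_1^{-1}$, and bundling them into $\gamma_n(s,d)\vee\log p_{\epsilon_1}$, gives the final exponent $9/8$.

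The main technical obstacle is the discretization under the rescaled metric: the ratio structure of $\hat Q_{\bv}$ means that a plain Euclidean $\epsilon$-net only preserves the quadratic form up to a multiplicative factor involving $\gamma_s^{2}$, which can be large, so one must extract this factor carefully through the $2s$-sparse operator norm (rather than the full spectral norm) and tune $\epsilon_1 = (n\gamma_s)^{-1}$ finely enough that the resulting error still dominates $n^{-1/8}$. A subordinate but non-trivial issue is to obtain the uniform-in-$j$ sub-exponential moment control of $R_{ij}$ with a constant that does not deteriorate as $\bv_j^{\T}\bSigma\bv_j \to 0$; the normalization in the definition of $\hat Q_{\bv}$ is engineered precisely to resolve this.
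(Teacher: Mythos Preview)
Your three-step architecture (discretize to an $\epsilon_1$-net, apply the CCK coupling to the resulting finite maximum, then convert via Gaussian anti-concentration) matches the paper exactly. The gap is in Step I, the discretization, and it is precisely the obstacle you flag at the end but do not actually overcome.

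Your algebraic decomposition of $\hat Q_{\bv}-\hat Q_{\bv'}$ into a numerator term and a denominator term, followed by the polarization identity, yields a bound of order $\epsilon_1\gamma_s^{2}\hat Q_{\max}$ (your own second summand), and routing the first summand through the $2s$-sparse operator norm of $\hat\bSigma-\bSigma$ introduces $\lambda_{\max,2s}(\bSigma)/\lambda_{\min,s}(\bSigma)$, which is not controlled by $\gamma_s$ at all. Even if you restrict to a support-preserving net so that only $s$-sparse quantities appear, relating the un-normalized difference $\bv^{\T}(\hat\bSigma-\bSigma)\bv-(\bv')^{\T}(\hat\bSigma-\bSigma)\bv'$ back to $\hat Q_{\max}$ costs $\lambda_{\max,s}/\lambda_{\min,s}=\gamma_s^{2}$, not $\gamma_s$. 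With the prescribed $\epsilon_1=(n\gamma_s)^{-1}$ this leaves a residual of order $\gamma_s\hat Q_{\max}/n$; after anti-concentration it contributes $\gamma_s\,\gamma_n(s,d)/n$ to the Kolmogorov distance. Since the theorem only constrains $\gamma_s$ through $s\log\gamma_s\le\gamma_n(s,d)$, one can have $\gamma_s$ as large as $\exp(\gamma_n(s,d)/s)$, and then $\gamma_s\,\gamma_n(s,d)/n$ vastly exceeds the claimed bound $\gamma_n(s,d)^{9/8}/n^{1/8}$.

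The paper resolves this with Lemma~\ref{lem:obs}: instead of separating numerator and denominator, one writes $\hat Q_{\bv}=\sqrt n\,|\bv_{\bSigma}^{\T}(\hat\bSigma-\bSigma)\bv_{\bSigma}|$ with $\bv_{\bSigma}=\bv/\|\bv\|_{\bSigma}$ and works entirely in the $\bSigma$-geometry. The key identity
\[
\|\bv_{\bSigma}-\tilde\bv_{\bSigma}\|_{\bSigma}^{2}=\frac{(\bv-\tilde\bv)^{\T}\bSigma(\bv-\tilde\bv)-(\|\bv\|_{\bSigma}-\|\tilde\bv\|_{\bSigma})^{2}}{\|\bv\|_{\bSigma}\|\tilde\bv\|_{\bSigma}}\le\gamma_s^{2}\|\bv-\tilde\bv\|_2^{2}
\]
then gives $|\hat Q_{\bv}-\hat Q_{\tilde\bv}|\le 2\gamma_s\|\bv-\tilde\bv\|_2\,\hat Q_{\max}$ directly, so that at scale $\epsilon_1=(n\gamma_s)^{-1}$ the discretization error is $O(\hat Q_{\max}/n)$ with no leftover $\gamma_s$. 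This single power of $\gamma_s$ (rather than $\gamma_s^{2}$) is exactly what makes the choice $\epsilon_1=(n\gamma_s)^{-1}$ work and is the content of the ``generalized $\epsilon$-net argument'' advertised in the abstract. Your proposal does not contain this step, and without it the argument does not close.
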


There are several interesting observations drawn from Theorem \ref{thm:limiting_dist}. First, as long as $  s\log (\gamma_s \cdot ed/s) \vee s\log n \vee \log  p_\epsilon=o(n^{1/9})$ for a properly chosen $\epsilon$, the distribution of $\hat{Q}_{\max}$ can be well approximated by that of the maximum of a Gaussian sequence. It is worth noting that no parametric assumption is imposed on the data generating scheme. Secondly, the result in Theorem \ref{thm:limiting_dist}, though not reflecting the exact limiting distribution of $\hat{Q}_{\max}$, sheds light on its asymptotic behavior. Following the standard extreme value theory, when $s=1$ and the covariance matrix $\bSigma$ is sparse, $\hat{Q}_{\max}$ follows a Gumbel distribution asymptotically as $d \to \infty$ \citep{cai2013two}. Thirdly, we note that when $\bSigma=\Ib_d$, the techniques used to prove Theorem \ref{thm:limiting_dist} can be adapted to derive the limiting distributions of extreme sample eigenvalues. See Section \ref{sec:app1} for details.

The detailed proof of Theorem \ref{thm:limiting_dist} is involved. Hence, a heuristic sketch is useful. A major ingredient stems from a Gaussian comparison inequality recently developed by \cite{chernozhukov2014gaussian}.

\begin{lemma}[Gaussian comparison inequality]\label{lem:coupling_ineq}
Let $\bX_1,\ldots,\bX_n$ be independent random vectors in $\reals^d$ with mean zero and finite absolute third moments, that is, $\E ( X_{ij} )=0$ and $\E ( |X_{ij}|^3 ) <\infty$ for all $1\leq i\leq n$ and $1\leq j\leq d$. Consider the statistic $Z:=\max_{1\leq j\leq d}\sum_{i=1}^n X_{ij}$. Let $\bY_1,\ldots,\bY_n$ be independent random vectors in $\reals^d$ with $\bY_i\sim N_d(\zero,\E ( \bX_i\bX_i^{\T} ) )$, $1\leq i\leq n$. Then, for every $\delta>0$, there exists a random variable $\tilde{Z}\stackrel{{\sf d}}{=}\max_{1\leq j\leq d}\sum_{i=1}^n Y_{ij}$ such that
\begin{align*}
\P(|Z-\tilde{Z}|\geq16\delta)\lesssim  \delta^{-2} \log(d n)  \{  D_1+ \delta^{-1} \log(d n) (D_2+D_3) \} + n^{-1} \log n ,
\end{align*}
where we write
\begin{align*}
D_1 =\E\bigg[\max_{1\leq j,l\leq d}\bigg|\sum_{i=1}^n \{ X_{ij}X_{il}-\E ( X_{ij}X_{il}) \} \bigg|\bigg],~~D_2=\E\bigg( \max_{1\leq j\leq d}\sum_{i=1}^n|X_{ij}|^3 \bigg), \\
D_3 =\sum_{i=1}^n \E\bigg[\max_{1\leq j\leq d}|X_{ij}|^3\cdot \mathds{1}\bigg\{ \max_{1\leq j\leq d}|X_{ij}|>\frac{\delta}{\log(d  n)}\bigg\} \bigg].
\end{align*}
\end{lemma}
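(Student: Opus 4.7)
The approach follows the Stein--Lindeberg smoothing strategy of Chernozhukov--Chetverikov--Kato, split into four stages: (i) replace the max by a smooth surrogate; (ii) Lindeberg-swap $X_i$ for $Y_i$ and bound the resulting telescoping sum by carefully truncating the heavy tails; (iii) convert the bound on smooth test functions into a Kolmogorov-distance bound between the laws of $Z$ and $\max_{1\leq j\leq d}\sum_{i=1}^n Y_{ij}$; and (iv) upgrade that Kolmogorov bound to an almost sure coupling via Strassen's theorem, with the $n^{-1}\log n$ residual absorbing the tail quantiles of the Gaussian max.

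For stage (i), I would introduce the soft-max $F_\beta(x)=\beta^{-1}\log \sum_{j=1}^d e^{\beta x_j}$, which obeys $\max_j x_j\leq F_\beta(x)\leq \max_j x_j+\beta^{-1}\log d$ and whose first, second, and third mixed partials satisfy the dimension-free bounds $\sum_j|\partial_j F_\beta|=1$, $\sum_{j,k}|\partial_{jk} F_\beta|\leq 2\beta$, and $\sum_{j,k,l}|\partial_{jkl} F_\beta|\leq 6\beta^2$. Composing with a $C^3$ mollifier $g_0$ of the indicator $\mathds{1}\{\,\cdot\,\leq t\}$, with $\|g_0^{(k)}\|_\infty\lesssim \varphi_0^{-k}$, yields a smooth surrogate $m=g_0\circ F_\beta$ for $\mathds{1}\{Z\leq t\}$ whose distribution-function error is at most $\beta^{-1}\log d+\varphi_0$.

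Stage (ii) is the heart of the argument. Telescoping along the Lindeberg chain $W_i=\sum_{j<i} X_j+\sum_{j>i} Y_j$, one writes
$$\E\bigl[m(Z^X)\bigr]-\E\bigl[m(Z^Y)\bigr]=\sum_{i=1}^n \E\bigl[m(W_i+X_i)-m(W_i+Y_i)\bigr].$$
A third-order Taylor expansion around $W_i$ kills zeroth- and first-order terms by independence and mean-zero. The second-order terms reduce to $\tfrac12 \sum_{j,k}\partial_{jk}m(W_i)\bigl\{X_{ij}X_{ik}-Y_{ij}Y_{ik}\bigr\}$, which does not cancel pointwise but has zero \emph{conditional} mean given $W_i$ because $Y_i$ matches the covariance of $X_i$; summing over $i$ and integrating by parts against the smooth derivative, the resulting $L^1$ norm is dominated by $\beta\,\varphi_0^{-1} D_1$ after a Nazarov-type conditioning. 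To handle the third-order Taylor remainder without Gaussian tails on $X_i$, I would truncate at height $\tau=\delta/\log(dn)$: decompose $X_i=X_i\mathds{1}\{\|X_i\|_\infty\leq \tau\}+X_i\mathds{1}\{\|X_i\|_\infty>\tau\}$, so that the bulk part generates a third-order remainder of order $\beta^2\varphi_0^{-2}D_2$ and the tail part contributes via $D_3$; the analogous bounds for $Y_i$ are automatic since Gaussian third moments are dominated by second moments.

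Combining stages (iii)--(iv), the Gaussian anti-concentration lemma of CCK gives $\sup_t \P(|Z^Y-t|\leq \eta)\lesssim \eta\sqrt{\log d}$, which converts the smooth-function bound with the calibrated choices $\beta\asymp \delta^{-1}\log(dn)$ and $\varphi_0\asymp \delta$ into a Kolmogorov-distance bound of order $\delta^{-2}\log(dn)\{D_1+\delta^{-1}\log(dn)(D_2+D_3)\}$ between the laws of $Z$ and $\max_j\sum_i Y_{ij}$. Strassen's theorem, applied after using a Borell--TIS concentration bound to restrict to Gaussian-max quantiles in $[n^{-1},1-n^{-1}]$ (which produces the residual $n^{-1}\log n$), then constructs $\tilde Z\stackrel{\sf d}{=}\max_j\sum_i Y_{ij}$ with $\P(|Z-\tilde Z|\geq 16\delta)$ dominated by the stated expression. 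The main obstacle is stage (ii): simultaneously tuning the truncation threshold $\tau$, the smoothness $\beta^{-1}$, and the mollifier width $\varphi_0$ so that the three sources $D_1,D_2,D_3$ enter only with polylogarithmic factors in $dn$, and so that the third-derivative bound $6\beta^2$ is routed through the Lindeberg remainder without generating spurious dimension factors.
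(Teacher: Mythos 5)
First, a point of reference: the paper does not prove this lemma at all. It is imported verbatim from Chernozhukov, Chetverikov, and Kato (their coupling theorem for maxima of sums of high-dimensional random vectors), so there is no in-paper argument to compare against; your proposal has to be judged as a reconstruction of the CCK proof. At that level, your architecture is the right one -- soft-max smoothing with the dimension-free derivative bounds, a swapping/interpolation step with a third-order Taylor expansion, truncation at $\tau=\delta/\log(dn)$ to split the remainder into the $D_2$ and $D_3$ pieces, Gaussian anti-concentration, and a Strassen-type step to turn a distributional comparison into an almost-sure coupling -- and your calibration $\beta\asymp\delta^{-1}\log(dn)$, $\varphi_0\asymp\delta$ is exactly what reproduces the exponents in the stated bound for the $(D_2+D_3)$ term.

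The genuine gap is in your treatment of the second-order term and, consequently, in where $D_1$ comes from. As you correctly observe, under the Lindeberg swap the term $\tfrac12\sum_{j,k}\partial_{jk}m(W_i)\{X_{ij}X_{ik}-Y_{ij}Y_{ik}\}$ has zero expectation, because $W_i$ is independent of $(X_i,Y_i)$ and the covariances match; but then it contributes \emph{nothing} to $\E[m(Z^X)]-\E[m(Z^Y)]$, and your subsequent claim that it nevertheless produces a contribution of order $\beta\varphi_0^{-1}D_1$ ``after a Nazarov-type conditioning'' is not an actual argument -- the two assertions are in tension, and no mechanism is exhibited. This matters because $D_1$ is a real and, in general, dominant term in the stated bound (it is a covariance-fluctuation quantity of order $\sqrt{n}$ for i.i.d.\ data, multiplied by precisely the second-derivative mass $\varphi_0^{-2}+\beta\varphi_0^{-1}\asymp\delta^{-2}\log(dn)$). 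In the source argument it does not arise from the unconditional Lindeberg expansion at all: the coupling is built by conditioning on the data and passing through an intermediate Gaussian maximum whose covariance is the \emph{empirical} matrix $\sum_i X_{ij}X_{ik}$, and the price of replacing that empirical covariance by the population one in a Gaussian-to-Gaussian (Slepian--Stein) comparison is exactly $\E[\max_{j,k}|\sum_i(X_{ij}X_{ik}-\E X_{ij}X_{ik})|]=D_1$. The same conditional construction is also what generates the $n^{-1}\log n$ residual (via the discretization needed to verify the Strassen hypothesis over all Borel sets), rather than a Borell--TIS quantile restriction as you suggest. So as written, stage (ii) either proves a different ($D_1$-free) statement without justification or leaves the $D_1$ term unaccounted for; you would need to add the conditional/empirical-covariance layer to recover the lemma as stated.
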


In view of Lemma \ref{lem:coupling_ineq} and the fact that $\hat{Q}_{\max}$ is the supremum of an infinite-state process, the proof can be divided into three steps. In the first step, we prove that the difference between $\hat{Q}_{\max}$ and its ``discretized version" is negligible asymptotically. This is implied by the following generalized $\epsilon$-net argument for the rescaled spectral norm. It extends the standard $\epsilon$-net argument \citep{vershynin2010introduction}.
\begin{lemma}\label{lem:obs}
For any $\bv,\tilde\bv\in\mathbb{V}(s,d)$ with the same support, positive definite matrix $\bSigma$, and any real symmetric matrix $\Mb$, we have
\begin{align*}
\big||\bv_{\bSigma}^{\T}\Mb\bv_{\bSigma}|-|\tilde\bv_{\bSigma}^{\T}\Mb\tilde\bv_{\bSigma}|\big|\leq 2\gamma_s \|\bv-\tilde\bv\|_2 \sup_{\bv\in\mathbb{V}(s,d)}|\bv_{\bSigma}^{\T}\Mb\bv_{\bSigma}|.
\end{align*}
\end{lemma}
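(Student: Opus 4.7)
The plan is to write $f(\bv) := \bv_\bSigma^\T \Mb \bv_\bSigma$ and first replace $||f(\bv)|-|f(\tilde\bv)||$ with $|f(\bv)-f(\tilde\bv)|$ via the reverse triangle inequality. Using symmetry of $\Mb$, I would then rewrite the difference by the polarization-type identity
\[
f(\bv)-f(\tilde\bv) = \bv_{\bSigma}^\T \Mb \bv_{\bSigma} - \tilde\bv_{\bSigma}^\T \Mb \tilde\bv_{\bSigma} = (\bv_{\bSigma}-\tilde\bv_{\bSigma})^\T \Mb (\bv_{\bSigma}+\tilde\bv_{\bSigma}),
\]
so that the problem reduces to bounding a bilinear form on two $s$-sparse vectors supported on the common support $S$.

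Next I would prove the auxiliary estimate that for any $\bu,\bw$ supported on $S$,
\[
|\bu^\T \Mb \bw| \leq Q \, \|\bu\|_{\bSigma} \|\bw\|_{\bSigma}, \qquad Q := \sup_{\bw'\in\mathbb{V}(s,d)} |f(\bw')|.
\]
The argument is a change of variables on the restricted subspace: setting $\Ab_S := \bSigma_S^{-1/2}\Mb_S \bSigma_S^{-1/2}$ and $\bx := \bSigma_S^{1/2}\bu|_S$, $\by := \bSigma_S^{1/2}\bw|_S$, one has $\bu^\T\Mb\bw = \bx^\T \Ab_S \by$ and $\|\bx\|_2 = \|\bu\|_\bSigma$, $\|\by\|_2 = \|\bw\|_\bSigma$. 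Cauchy--Schwarz then yields $|\bu^\T\Mb\bw|\le \|\Ab_S\|_2\|\bu\|_\bSigma\|\bw\|_\bSigma$, and one checks $\|\Ab_S\|_2 \leq Q$ by observing that for any unit $\by\in\mathbb{R}^{|S|}$ the vector $\bw = \bSigma_S^{-1/2}\by/\|\bSigma_S^{-1/2}\by\|_2$, embedded into $\mathbb{R}^d$, lies in $\mathbb{V}(s,d)$ and satisfies $\by^\T\Ab_S\by = f(\bw)$.

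Applying this to the polarization identity gives $|f(\bv)-f(\tilde\bv)| \leq Q\,\|\bv_\bSigma - \tilde\bv_\bSigma\|_\bSigma\, \|\bv_\bSigma+\tilde\bv_\bSigma\|_\bSigma$. The factor $\|\bv_\bSigma+\tilde\bv_\bSigma\|_\bSigma$ is immediately bounded by $2$ via the triangle inequality since $\|\bv_\bSigma\|_\bSigma=\|\tilde\bv_\bSigma\|_\bSigma = 1$. The remaining and most delicate step is to show $\|\bv_\bSigma-\tilde\bv_\bSigma\|_\bSigma \leq \gamma_s\|\bv-\tilde\bv\|_2$; this is where keeping the constant at $2$ (rather than a naive $4$) matters. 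Writing $a = \|\bv\|_\bSigma$, $\tilde a = \|\tilde\bv\|_\bSigma$ and expanding, one obtains the identity
\[
\|\bv_\bSigma-\tilde\bv_\bSigma\|_\bSigma^2 = \frac{\|\bv-\tilde\bv\|_\bSigma^2 - (a-\tilde a)^2}{a\tilde a} \leq \frac{\|\bv-\tilde\bv\|_\bSigma^2}{a\tilde a}.
\]
Since $\bv-\tilde\bv$ is supported on $S$ with $|S|\le s$, $\|\bv-\tilde\bv\|_\bSigma^2 \leq \lambda_{\max,s}(\bSigma)\|\bv-\tilde\bv\|_2^2$, and similarly $a\tilde a \geq \lambda_{\min,s}(\bSigma)$, giving the claimed $\gamma_s$ factor.

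The main obstacle is achieving the constant $2$: a routine bound $\|\bv_\bSigma-\tilde\bv_\bSigma\|_\bSigma \leq \|\bv-\tilde\bv\|_\bSigma/a + |\tilde a - a|/a$ loses a factor of $2$ and yields $4\gamma_s$. The sharp estimate requires the closed-form identity above, which exploits cancellation between the magnitude mismatch $(a-\tilde a)^2$ and the $\bSigma$-distance between the normalized vectors. Combining the three displayed bounds then gives $|f(\bv)-f(\tilde\bv)| \leq 2\gamma_s Q\,\|\bv-\tilde\bv\|_2$, as stated.
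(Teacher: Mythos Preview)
Your proof is correct and follows essentially the same route as the paper's: both hinge on the identity $\|\bv_\bSigma-\tilde\bv_\bSigma\|_\bSigma^2=\{\|\bv-\tilde\bv\|_\bSigma^2-(a-\tilde a)^2\}/(a\tilde a)$ to get the sharp $\gamma_s$ factor, and on bounding a bilinear form $|\bu^\T\Mb\bw|$ by $Q\,\|\bu\|_\bSigma\|\bw\|_\bSigma$. The only differences are cosmetic: the paper uses the telescoping split $\bv_\bSigma^\T\Mb(\bv_\bSigma-\tilde\bv_\bSigma)+(\bv_\bSigma-\tilde\bv_\bSigma)^\T\Mb\tilde\bv_\bSigma$ rather than your polarization $(\bv_\bSigma-\tilde\bv_\bSigma)^\T\Mb(\bv_\bSigma+\tilde\bv_\bSigma)$, and it handles the bilinear bound by recasting each factor in the form $(\,\cdot\,)_\bSigma$ instead of via your explicit substitution $\Ab_S=\bSigma_S^{-1/2}\Mb_S\bSigma_S^{-1/2}$.
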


In the second step, we show that this discretized version of $\hat{Q}_{\max}$ converges in distribution to the maximum of a finite Gaussian sequence. This can be achieved by exploiting Lemma \ref{lem:coupling_ineq}.  Lastly, anti-concentration bounds  \citep{chernozhukov2014comparison} are established to bridge the gap between the distributions of $\hat{Q}_{\max}$ and its discretized version. 
The  complete proof is provided in Section \ref{sec:proof}.

The asymptotic result in Theorem \ref{thm:limiting_dist} is difficult to use in practice. To estimate the limiting distribution of $\hat{Q}_{\max}$ empirically, bootstrap approximation is preferred. For any $\bv\in\mathbb{V}(s,d)$, define
\begin{align}\label{eq:MBdef}
\hat{B}_{\bv} =\frac{1}{\sqrt{n}} \frac{  \sum_{i=1}^n\xi_i(\bv^{\T} \bX_i\bX_i^{\T}\bv -\bv^{\T}\bSigma\bv )}{\bv^{\T}\bSigma\bv} ~~~{\rm and}~~~ \hat B_{\max} = \sup_{\bv\in\mathbb{V}(s,d)}| \hat{B}_{\bv} |,
\end{align}
where $\xi_1,\ldots,\xi_n$ are i.i.d. standard normal that are independent of $\{\bX_i\}_{i=1}^n$. We use the conditional distribution of $\hat B_{\max}$ given the data to approximate the distribution of $\hat{Q}_{\max}$. The next theorem characterizes the validity of bootstrap approximation.

\begin{theorem}\label{thm:multiplier_bootstrap}
Let Assumptions \ref{cdt1} and \ref{cdt2} be satisfied, and assume that $ \gamma_n(s,d)=s\log (\gamma_s \cdot ed/s) \vee s\log n=o(n^{1/9})$ as $n\to \infty$. Then, there exists a sufficiently large absolute constant $C>0$ such that
\begin{align*}
\P\bigg\{ \sup_{t\geq0}\Big|\P\big(\hat{Q}_{\max}\leq t\big)-\P\big(\hat B_{\max}\leq t  \, | \bX_1,\ldots,\bX_n\big)\Big|\geq C  \frac{ \gamma^{9/8}_n(s,d)}{n^{1/8}} \bigg\} =o(1).
\end{align*}
In other words, we have
\begin{align*}
 \sup_{t\geq0}\big|\P (\hat{Q}_{\max}\leq t )-\P ( \hat B_{\max} \leq t \,  |  \bX_1,\ldots,\bX_n ) \big|  = o_{\mathbb{P}}(1) .
\end{align*}
\end{theorem}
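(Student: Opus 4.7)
The plan is to bound the target quantity through the auxiliary Gaussian maximum $M_G:=\max_{1\leq j\leq p_{\epsilon_1}}|G_j|$ produced in Theorem~\ref{thm:limiting_dist}, via the triangle inequality
\[
\sup_{t\geq0}\big|\P(\hat Q_{\max}\leq t)-\P(\hat B_{\max}\leq t\,|\,\bX_1,\ldots,\bX_n)\big|\leq \Delta_1+\Delta_2,
\]
where $\Delta_1 := \sup_t|\P(\hat Q_{\max}\leq t)-\P(M_G\leq t)|$ is controlled directly by Theorem~\ref{thm:limiting_dist}, and $\Delta_2 := \sup_t|\P(M_G\leq t)-\P(\hat B_{\max}\leq t\,|\,\bX_1,\ldots,\bX_n)|$ is what remains. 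Under the scaling $\gamma_n(s,d)=o(n^{1/9})$, Theorem~\ref{thm:limiting_dist} already gives $\Delta_1=O(\gamma_n^{9/8}(s,d)/n^{1/8})$, so it suffices to match this rate for $\Delta_2$ with probability tending to one.

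The crucial structural observation is that, \emph{conditional on} $\{\bX_i\}_{i=1}^n$, the restriction of the bootstrap process $\bv\mapsto\hat B_{\bv}$ to the $\epsilon_1$-net $\mathbb{N}_{\epsilon_1}$ is \emph{exactly} a centered Gaussian vector in $\reals^{p_{\epsilon_1}}$, since each $\hat B_{\bv_j}$ is a linear combination of the i.i.d.\ standard normals $\xi_1,\ldots,\xi_n$. Its conditional covariance is
\[
\hat\Sigma_{jk}:=\frac{1}{n}\sum_{i=1}^n R_{ij}R_{ik},\qquad R_{ij}:=\frac{\bv_j^{\T}\bX_i\bX_i^{\T}\bv_j-\bv_j^{\T}\bSigma\bv_j}{\bv_j^{\T}\bSigma\bv_j},
\]
whereas the reference Gaussian vector $(G_1,\ldots,G_{p_{\epsilon_1}})^{\T}$ of Theorem~\ref{thm:limiting_dist} has population covariance $\Sigma_{jk}:=\E[R_j R_k]$. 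I then bound $\Delta_2$ in three short steps: (i) replace $\hat B_{\max}$ by its discretization $\hat B^{\sharp}:=\max_{\bv\in\mathbb{N}_{\epsilon_1}}|\hat B_{\bv}|$ via Lemma~\ref{lem:obs} applied to the data-dependent symmetric matrix $\Mb_n:=n^{-1/2}\sum_i\xi_i(\bX_i\bX_i^{\T}-\bSigma)$, costing a multiplicative $1+O(n^{-1})$ factor that is absorbed using the Gaussian anti-concentration bound of \cite{chernozhukov2014comparison}; (ii) invoke the Chernozhukov--Chetverikov--Kato Gaussian-to-Gaussian comparison inequality to obtain
\[
\sup_t\big|\P(\hat B^{\sharp}\leq t\,|\,\bX_1,\ldots,\bX_n)-\P(M_G\leq t)\big|\lesssim \bar\Delta^{1/3}(1\vee\log p_{\epsilon_1})^{2/3},
\]
with $\bar\Delta:=\max_{1\leq j,k\leq p_{\epsilon_1}}|\hat\Sigma_{jk}-\Sigma_{jk}|$; and (iii) control $\bar\Delta$ uniformly through a Bernstein-type bound.

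The main technical obstacle is step~(iii). Under Assumption~\ref{cdt1}, for every $\bv\in\mathbb{V}(s,d)$ the linear score $\bv^{\T}\bX$ is sub-Gaussian, so each $R_{ij}$ is centered sub-exponential with uniformly bounded Orlicz $\psi_1$-norm, and each product $R_{ij}R_{ik}$ is sub-Weibull of order $1/2$. A Bernstein-type tail bound together with a union bound over the $p_{\epsilon_1}^2\leq\exp\{2\gamma_n(s,d)\}$ pairs yields
\[
\bar\Delta\;\lesssim\; \sqrt{\frac{\log p_{\epsilon_1}}{n}}+\frac{(\log p_{\epsilon_1})^2}{n}
\]
with probability $1-o(1)$. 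Since $\log p_{\epsilon_1}\lesssim\gamma_n(s,d)=o(n^{1/9})$ by Lemma~\ref{lem:discret}, the second summand is negligible, and feeding the first into step~(ii) produces $\Delta_2=o_{\mathbb{P}}(\gamma_n^{9/8}(s,d)/n^{1/8})$. Combining $\Delta_1$ and $\Delta_2$ yields the desired high-probability bound, and the second ``$o_{\mathbb{P}}(1)$'' assertion follows at once. The delicate point is producing an exponential-in-$s\log(ed/s)$ concentration for the quartic functionals $R_{ij}R_{ik}$ without incurring extra polynomial-in-$s$ factors, which requires carefully exploiting the $s$-sparsity of the $\bv_j$'s together with the sub-Gaussian tail in Assumption~\ref{cdt1}.
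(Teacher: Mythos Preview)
Your proposal is correct and follows essentially the same architecture as the paper's proof: triangulate through the Gaussian maximum $M_G$ from Theorem~\ref{thm:limiting_dist}, discretize $\hat B_{\max}$ to the net via Lemma~\ref{lem:obs}, apply the Chernozhukov--Chetverikov--Kato Gaussian-to-Gaussian comparison (the paper's Lemma~\ref{lem:comparison}), and then control the covariance discrepancy $\bar\Delta=\max_{j,k}|\hat\Sigma_{jk}-\Sigma_{jk}|$. The only minor difference is in the last step: the paper expands $R_{ij}R_{ik}=(W_{ij}^2-1)(W_{ik}^2-1)$ algebraically and splits $\bar\Delta$ into a fourth-moment piece $\Delta_{G,1}$ (handled by a truncation argument, Lemma~\ref{lem:LT2}) and a second-moment piece $\Delta_{G,2}$ (handled by Lemma~\ref{lem:connecting}), whereas you bound $\bar\Delta$ directly via a sub-Weibull($1/2$) Bernstein inequality; both routes give the same rate $\bar\Delta=O_{\mathbb P}(\sqrt{\gamma_n(s,d)/n})$ and hence $\Delta_2=O_{\mathbb P}(\gamma_n^{5/6}(s,d)/n^{1/6})\leq \gamma_n^{9/8}(s,d)/n^{1/8}$.
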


The proof of Theorem \ref{thm:multiplier_bootstrap} heavily relies on characterizing the convergence rates of sub-Gaussian fourth-order terms. We defer this result and the detailed proof of Theorem \ref{thm:multiplier_bootstrap} to Section \ref{sec:proof}.

\medskip

The rest of this section gives asymptotic results for the non-normalized version of $\hat{Q}_{\max}$. To this end, let $\tilde Q_{\bv}$ and $\tilde Q_{\max}$ be the rank-one projection and $s$-sparse largest singular value of $\hat\bSigma-\bSigma$, given respectively by
\[
\tilde Q_{\bv} = \sqrt{n}\, \bv^\T(\hat\bSigma-\bSigma)\bv~~{\rm and}~~\tilde Q_{\max}  =\sup_{\bv\in\mathbb{V}(s,d)}| \tilde Q_{ \bv}|.
\]
Technically speaking, $\tilde Q_{\max}$ is a simpler version of $\hat{Q}_{\max}$. We show that, under an additional eigenvalue assumption, $\tilde Q_{\max}$ converges weakly to the extreme of a Gaussian sequence. In particular, the following condition assumes that the $s$-sparse (restricted) largest eigenvalue of $\bSigma$ is upper bounded by an absolute constant.

\begin{assumption}\label{cdt4}
There exists an absolute constant $L>0$ such that $\lambda_{\max, s}(\bSigma)\leq L$.
\end{assumption}

We define, for any $\bv \in \mathbb{V}(s,d)$,
\begin{align}\label{eq:MBdef2}
\tilde B_{ \bv} =\frac{1}{\sqrt{n}} \sum_{i=1}^n\xi_i(\bv^{\T} \bX_i\bX_i^{\T} \bv - \bv^{\T} \bSigma  \bv )  ~~~{\rm and}~~~\tilde B_{\max} =\sup_{\bv\in\mathbb{V}(s,d)}|\tilde B_{ \bv}|,
\end{align}
where $\xi_1,\ldots,\xi_n$ are i.i.d. standard normal random variables independent of $\{\bX_i\}_{i=1}^n$. The following theorem gives the Gaussian approximation result for $\tilde Q_{\max}$.

\begin{theorem}\label{thm:limiting_dist2}
Let Assumptions \ref{cdt1}--\ref{cdt4} be satisfied and set $\epsilon_2 = n^{-1}$. Then, for any $\epsilon_2$-net $\mathbb{N}_{\epsilon_2}$ of $\mathbb{V}(s,d)$ with cardinality $p_{\epsilon_2} =|\mathbb{N}_{\epsilon_2}|$, there exists a $p_{\epsilon_2}$-dimensional centered Gaussian random vector $(\tilde G_1,\ldots, \tilde G_{p_{\epsilon_2}})^{\T}$ satisfying $\E( \tilde G_j \tilde G_k ) =\E ( \tilde R_{j} \tilde R_k )$ for $1\leq j, k \leq p_{\epsilon_2}$ with $\tilde R_{j} :=\bw_j^{\T}(\bX\bX^{\T}-\bSigma)\bw_j$, such that
\begin{align}\label{eq:24}
\sup_{t\geq0}\bigg| \P( \tilde{Q}_{\max} \leq t )-\P\bigg(\max_{1\leq j\leq p_{\epsilon_2}}| \tilde G_{j} |\leq t\bigg)\bigg|\leq C_LK_2^2   \frac{  \delta^{9/8}_n(s,d)}{n^{1/8}} ,
\end{align}
where $C_L>0$ is a constant depending only on $L$, $K_2 =K_1^2+1$, and $\delta_n(s,d):=s\log (ed/s) \vee s \log n$. In addition, if $(s,d,n)$ satisfies $s\log (ed/s) \vee s \log n = o(n^{1/9})$ as $n\to \infty$, then there exists an absolute constant $C>0$ large enough such that
\begin{align}\label{eq:25}
\P\bigg\{ \sup_{t\geq0}\big|\P(\tilde Q_{\max} \leq t  )-\P ( \tilde B_{\max} \leq t \, | \bX_1,\ldots,\bX_n )\big|\geq C \frac{ \delta^{9/8}_n(s,d)}{n^{1/8}} \bigg\} =o(1).
\end{align}
In other words, we have
\begin{align*}
 \sup_{t\geq0}\big|\P (\tilde{Q}_{\max}\leq t )-\P ( \tilde B_{\max} \leq t \, |  \bX_1,\ldots,\bX_n ) \big|  = o_{\mathbb{P}}(1) .
\end{align*}
\end{theorem}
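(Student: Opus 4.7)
The plan is to mirror the three-step strategy already used for Theorems \ref{thm:limiting_dist} and \ref{thm:multiplier_bootstrap}, adapted to the unnormalized quadratic forms and letting Assumption \ref{cdt4} play the role that the denominator $\bv^{\T}\bSigma\bv$ played previously. The key conceptual point is that, without the $\bSigma$-rescaling, all terms that were previously automatically of unit scale must instead be scaled by $L$, so $\gamma_s$ in the proof of Theorem \ref{thm:limiting_dist} gets replaced by an absolute constant depending only on $L$ (and $K_1$).

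\emph{Step 1: Discretization.} The same argument that proves Lemma \ref{lem:obs}, but applied to $|\bv^{\T}\Mb\bv|$ instead of $|\bv_{\bSigma}^{\T}\Mb\bv_{\bSigma}|$, gives, for any $\bv,\tilde\bv\in\mathbb{V}(s,d)$ with common support and any real symmetric $\Mb$,
\[
\bigl||\bv^{\T}\Mb\bv|-|\tilde\bv^{\T}\Mb\tilde\bv|\bigr|\leq 2\|\bv-\tilde\bv\|_2\sup_{\bu\in\mathbb{V}(s,d)}|\bu^{\T}\Mb\bu|.
\]
Taking $\Mb=\sqrt{n}(\hat\bSigma-\bSigma)$ and unioning over the $\binom{d}{s}$ supports, the choice $\epsilon_2=1/n$ yields $\tilde Q_{\max}\leq (1-2/n)^{-1}\max_{1\leq j\leq p_{\epsilon_2}}|\tilde Q_{\bv_j}|$, an error absorbed into the final rate.

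\emph{Step 2: Gaussian coupling.} I apply Lemma \ref{lem:coupling_ineq} to $X_{ij}:=n^{-1/2}(\bv_j^{\T}\bX_i\bX_i^{\T}\bv_j-\bv_j^{\T}\bSigma\bv_j)$ for $j=1,\dots,p_{\epsilon_2}$ (doubling the index to accommodate absolute values). Under Assumption \ref{cdt1}, $\bv_j^{\T}\bX_i$ is sub-Gaussian, and under Assumption \ref{cdt4}, $\|\bv_j^{\T}\bX_i\|_{\psi_2}^2\leq K_1^2\bv_j^{\T}\bSigma\bv_j\leq K_1^2 L$, so each $X_{ij}$ is a centered sub-exponential variable whose Orlicz norm is bounded by an absolute multiple of $K_2^2 L$. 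The bounds $D_1,D_2,D_3$ in Lemma \ref{lem:coupling_ineq} are then controlled by the standard maximal Bernstein/Orlicz estimates, exactly as in the proof of Theorem \ref{thm:limiting_dist} but with $\gamma_s$ replaced by $L$; after optimizing $\delta$ one gets the rate $\delta_n^{9/8}(s,d)/n^{1/8}$ up to a constant $C_L$.

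\emph{Step 3: Anti-concentration.} The Chernozhukov--Chetverikov--Kato anti-concentration inequality for Gaussian maxima converts the coupling bound into the Kolmogorov-distance bound \eqref{eq:24}, absorbing the $O(n^{-1})$ discretization error.

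For the bootstrap validity \eqref{eq:25}, I follow the proof of Theorem \ref{thm:multiplier_bootstrap}. Conditionally on $\bX_1,\dots,\bX_n$, the discretized vector $(\tilde B_{\bv_j})_{j=1}^{p_{\epsilon_2}}$ is centered Gaussian with covariance $\hat\bSigma^{(4)}_{jk}:=n^{-1}\sum_{i=1}^n(\bv_j^{\T}\bX_i\bX_i^{\T}\bv_j-\bv_j^{\T}\bSigma\bv_j)(\bv_k^{\T}\bX_i\bX_i^{\T}\bv_k-\bv_k^{\T}\bSigma\bv_k)$, whose population counterpart $\bSigma^{(4)}_{jk}:=\E[\tilde R_j\tilde R_k]$ is precisely the covariance of the Gaussian vector produced in Step 2. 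I apply the Gaussian-to-Gaussian comparison inequality of Chernozhukov--Chetverikov--Kato, whose error scales with $\max_{j,k}|\hat\bSigma^{(4)}_{jk}-\bSigma^{(4)}_{jk}|$ up to polylog factors. The main technical obstacle is to show with high probability
\[
\max_{1\leq j,k\leq p_{\epsilon_2}}\bigl|\hat\bSigma^{(4)}_{jk}-\bSigma^{(4)}_{jk}\bigr|\lesssim L^2 K_2^4\sqrt{\delta_n(s,d)/n},
\]
which is a sub-Gaussian fourth-order chaos bound; this is the same ingredient the paper flags just after Theorem \ref{thm:multiplier_bootstrap}, and Assumption \ref{cdt4} again controls the absolute scale of the entries. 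Combining this fourth-moment deviation bound, the Gaussian--Gaussian comparison, the discretization argument from Step 1 applied to $\tilde B_{\max}$, and the anti-concentration inequality yields \eqref{eq:25}; the $o_{\mathbb{P}}(1)$ statement follows from the scaling hypothesis $\delta_n(s,d)=o(n^{1/9})$. The fourth-order chaos bound is the only substantially new ingredient relative to the proof of Theorem \ref{thm:limiting_dist}, and it is where I expect the detailed work to concentrate.
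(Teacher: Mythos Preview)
Your proposal is correct and follows essentially the same approach as the paper, which explicitly states that Theorem \ref{thm:limiting_dist2} ``can be proved based on similar arguments used in the proofs of Theorems \ref{thm:limiting_dist} and Theorem \ref{thm:multiplier_bootstrap}'' and omits the details. Your identification of the key adaptations---that the standard $\epsilon$-net argument replaces Lemma \ref{lem:obs} (no $\gamma_s$ factor needed), and that Assumption \ref{cdt4} supplies the uniform $L$-scale that the denominator $\bv^{\T}\bSigma\bv$ previously provided---is exactly right; one minor slip is that the $\psi_1$-norm bound on $X_{ij}$ should be of order $K_2 L$ rather than $K_2^2 L$, but this does not affect the argument.
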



\begin{remark}
{\rm
By comparing Theorems \ref{thm:limiting_dist} and \ref{thm:limiting_dist2}, we immediately observe some difference between the properties of $\hat{Q}_{\max}$ and $\tilde Q_{\max}$. To ensure the validity of the multiplier bootstrap approximation for $\hat{Q}_{\max}$, we only require $ s\log (\gamma_s   ed/s) \vee s\log n =o(n^{1/9})$, and thus allow $\lambda_{\max, s}(\bSigma)$ to grow quickly.  In contrast, the bootstrap approximation consistency for $\tilde Q_{\max}$ relies on $C_L$, a constant of the same order as $\lambda_{\max, s}^2(\bSigma)$. }
\end{remark}

\section{Application I: bootstrap inference on largest  and smallest eigenvalues for spherical distributions}\label{sec:app1}

A direct application of Theorem \ref{thm:limiting_dist} is on inferring extreme sample eigenvalues of spherical distributions. A random vector is said to be spherically distributed if its covariance matrix is proportional to the identity. Note that this definition is slightly different from its counterpart in robust statistics, where a more stringent rotation-invariant property is required \citep{fang1990symmetric}.

It is known that when multiple roots exist (i.e., the population eigenvalues are not distinct), the sample eigenvalues are not asymptotically normal even under the Gaussian assumption \citep{anderson1963asymptotic}. \cite{waternaux1976asymptotic} and \cite{tyler1983asymptotic} showed that inference is even more challenging for non-Gaussian data as the limiting distributions of the sample eigenvalues rely on the skewness and kurtosis of the underlying distribution. Estimation of these parameters is statistically costly. Bootstrap methods are hence recommended for conducting inference. 

In the presence of multiple roots, Beran and Srivastava \citep{beran1985bootstrap} pointed out that the nonparametric bootstrap for eigenvalue inference is inconsistent. The $m$-out-of-$n$ bootstrap \citep{hall1993inconsistency} and its modification \citep{hall2009tie} are hence proposed to correct this. The implementation, however, is complicated since tuning parameters are involved.  

Based on Theorem \ref{thm:multiplier_bootstrap}, we show that a simple multiplier bootstrap method leads to asymptotically valid inference for extreme eigenvalues, as stated in the next theorem.
\begin{theorem}\label{thm:limiting_dist3}
Suppose that Assumptions \ref{cdt1} and \ref{cdt2} hold. In addition, assume that $\bSigma=\sigma^2\Ib_d$ with $\sigma^2>0$ an absolute constant. Then, as long as $d=o(n^{1/9})$,
\begin{align*}
\sup_{t}\bigg|\P\big\{ \lambda_{\max}(\hat\bSigma ) \!\leq \sigma^2+t\big\}\! -\! \P\bigg[ \lambda_{\max}\bigg\{  \frac{1}{n}\sum_{i=1}^n\xi_i(\bX_i\bX_i^\T\!-\!\sigma^2\Ib_d) \bigg\} \!\leq t \,\bigg| \bX_1,\ldots,\bX_n\bigg] \bigg|\!=\!o_{\mathbb{P}}(1),
\end{align*}
and
\begin{align*}
\sup_{t}\bigg|\P\big\{ \lambda_{\min}(\hat\bSigma) \!\geq \sigma^2+t\big\} \!-\! \P\bigg[ \lambda_{\min}\bigg\{ \frac{1}{n}\sum_{i=1}^n\xi_i(\bX_i\bX_i^\T\!-\!\sigma^2\Ib_d) \bigg\} \!\leq t \, \bigg| \bX_1,\ldots,\bX_n\bigg] \bigg|\!=\!o_{\mathbb{P}}(1).
\end{align*}
Here $\{\xi_i\}_{i=1}^n$ forms an independent standard Gaussian sequence independent of the data. 
\end{theorem}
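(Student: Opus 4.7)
The plan is to specialize Theorem~\ref{thm:multiplier_bootstrap} to $\bSigma = \sigma^2 \Ib_d$ with $s = d$. Under this choice $\mathbb{V}(d,d) = \mathbb{S}^{d-1}$, $\bv^\T \bSigma \bv \equiv \sigma^2$, and $\gamma_d(\bSigma) = 1$, so that $\gamma_n(d,d) \asymp d \log n$ and the scaling hypothesis of Theorem~\ref{thm:multiplier_bootstrap} reduces to the assumed $d = o(n^{1/9})$. By the variational characterization of extreme eigenvalues,
\[
\sup_{\bv \in \mathbb{S}^{d-1}} \hat Q_\bv = \frac{\sqrt n}{\sigma^2}\bigl(\lambda_{\max}(\hat\bSigma) - \sigma^2\bigr), \qquad \inf_{\bv \in \mathbb{S}^{d-1}} \hat Q_\bv = \frac{\sqrt n}{\sigma^2}\bigl(\lambda_{\min}(\hat\bSigma) - \sigma^2\bigr),
\]
and the same identities hold for $\hat B_\bv$ with $M^\ast := n^{-1}\sum_{i=1}^n \xi_i (\bX_i\bX_i^\T - \sigma^2\Ib_d)$ in place of $\hat\bSigma - \sigma^2\Ib_d$.

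The main technical step is to extract a one-sided analog of Theorem~\ref{thm:multiplier_bootstrap}, namely
\[
\sup_{t}\Bigl|\P\bigl(\textstyle\sup_{\bv} \hat Q_\bv \le t\bigr) - \P\bigl(\sup_{\bv} \hat B_\bv \le t \bigm| \bX_1,\ldots,\bX_n\bigr)\Bigr| = o_\P(1),
\]
together with the same bound when $\sup$ is replaced by $\inf$ throughout --- equivalently, the same statement applied to the negated process $\{-\hat Q_\bv\}$. Each ingredient of the proof of Theorem~\ref{thm:multiplier_bootstrap} carries over without essential change. The $\epsilon$-net bound in Lemma~\ref{lem:obs} is produced by the polarization identity $\bv_\bSigma^\T M \bv_\bSigma - \tilde\bv_\bSigma^\T M \tilde\bv_\bSigma = (\bv_\bSigma - \tilde\bv_\bSigma)^\T M \bv_\bSigma + \tilde\bv_\bSigma^\T M (\bv_\bSigma - \tilde\bv_\bSigma)$, which actually delivers the stronger unsigned Lipschitz bound
$|\bv_\bSigma^\T M \bv_\bSigma - \tilde\bv_\bSigma^\T M \tilde\bv_\bSigma| \le 2\gamma_s \|\bv - \tilde\bv\|_2 \sup_{\bu} |\bu_\bSigma^\T M \bu_\bSigma|$. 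The Gaussian comparison Lemma~\ref{lem:coupling_ineq} is already stated for $\max_j \sum_i X_{ij}$ (not for $\max_j |\sum_i X_{ij}|$) and needs no modification. Finally, the anti-concentration step used to pass from the discretized max back to the continuous supremum applies to any centered Gaussian process.

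Once the one-sided approximation is in hand, the two displays follow by elementary algebra. For the first, apply the one-sided bound to $\hat Q_\bv$ and reparameterize $u = \sqrt n t/\sigma^2$, a bijection of $\R$ that preserves the supremum over the threshold. For the second, apply the one-sided bound to $\{-\hat Q_\bv\}$ (also centered and satisfying the same moment conditions as $\{\hat Q_\bv\}$) to approximate $\sqrt n(\sigma^2 - \lambda_{\min}(\hat\bSigma))/\sigma^2$ by $-\sqrt n \lambda_{\min}(M^\ast)/\sigma^2$. Combined with the conditional symmetry $\xi_i \stackrel{{\sf d}}{=} -\xi_i$, which gives $M^\ast \stackrel{{\sf d}}{=} -M^\ast$ conditionally on the data and hence identifies the conditional law of $\lambda_{\min}(M^\ast)$ with that of $-\lambda_{\max}(M^\ast)$, this allows one to pass between sup- and inf-based forms and reconcile the inequalities in the statement after reparameterization.

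The main obstacle is the one-sided upgrade in the second paragraph. Lemma~\ref{lem:obs} is stated only in the form controlling $||\cdot| - |\cdot||$, but its underlying polarization argument yields the stronger unsigned Lipschitz bound displayed above; once this observation is in place, the remaining steps in the proof of Theorem~\ref{thm:multiplier_bootstrap} --- the fourth-moment control of the bootstrap variance and the Chernozhukov--Chetverikov--Kato anti-concentration application --- go through with only cosmetic bookkeeping changes. All subsequent rescalings and sign-tracking are routine.
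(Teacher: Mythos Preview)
Your approach is correct and matches the paper's own (omitted) proof, which simply defers to the arguments of Theorems~\ref{thm:limiting_dist} and~\ref{thm:multiplier_bootstrap}; you have correctly identified that the essential modification is a one-sided (signed) version of the discretization and Gaussian-coupling steps, and that the polarization argument behind Lemma~\ref{lem:obs} already delivers the unsigned Lipschitz bound needed for this. Two minor caveats worth recording: with $s=d$ and $\gamma_d=1$ one has $\gamma_n(d,d)=d\vee d\log n\asymp d\log n$, so a literal invocation of Theorem~\ref{thm:multiplier_bootstrap} requires $d\log n=o(n^{1/9})$, a $\log n$ factor stronger than the stated $d=o(n^{1/9})$; and the second display of the theorem has mismatched inequality directions (one $\geq$, one $\leq$), which the conditional symmetry $M^\ast\stackrel{{\sf d}}{=}-M^\ast$ cannot reconcile---what your argument actually establishes is the version with matching directions, which is presumably the intended statement.
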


\begin{figure}[tp!]
\begin{center}
\begin{tabular}{c}
\vspace{-0.05in}
\includegraphics[width=0.95\textwidth,angle=0]{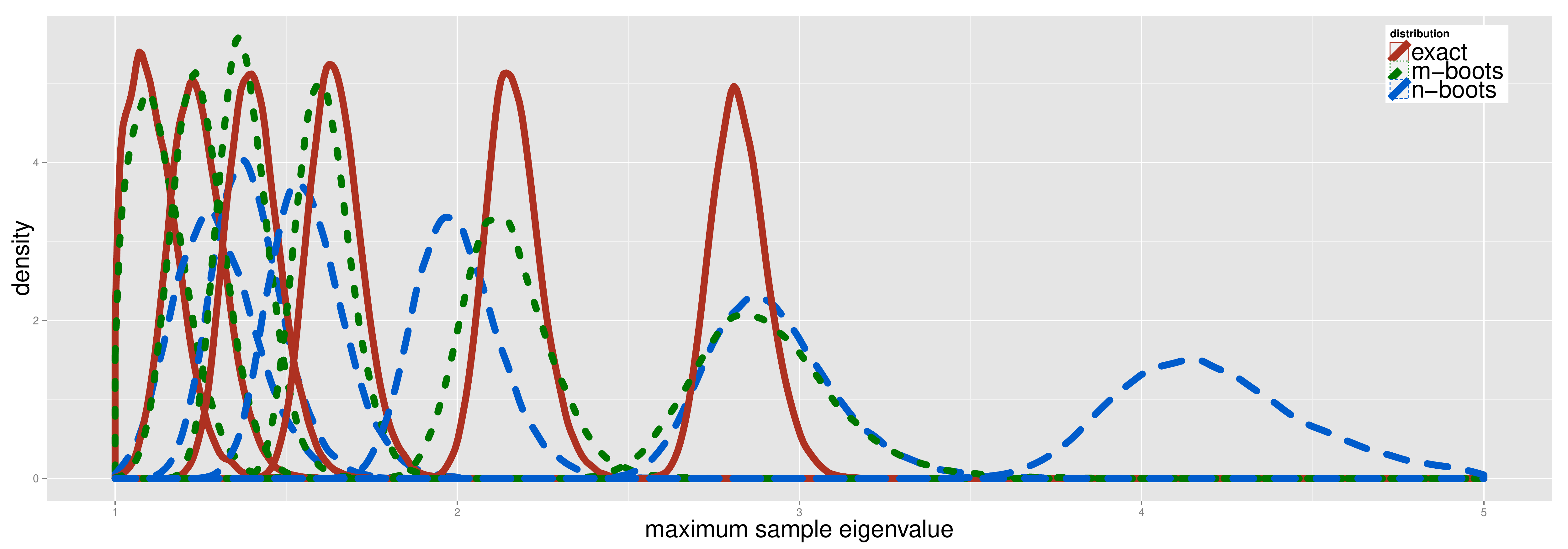} 
\\[-0pt]
$n=200$
\\
\includegraphics[width=0.95\textwidth,angle=0]{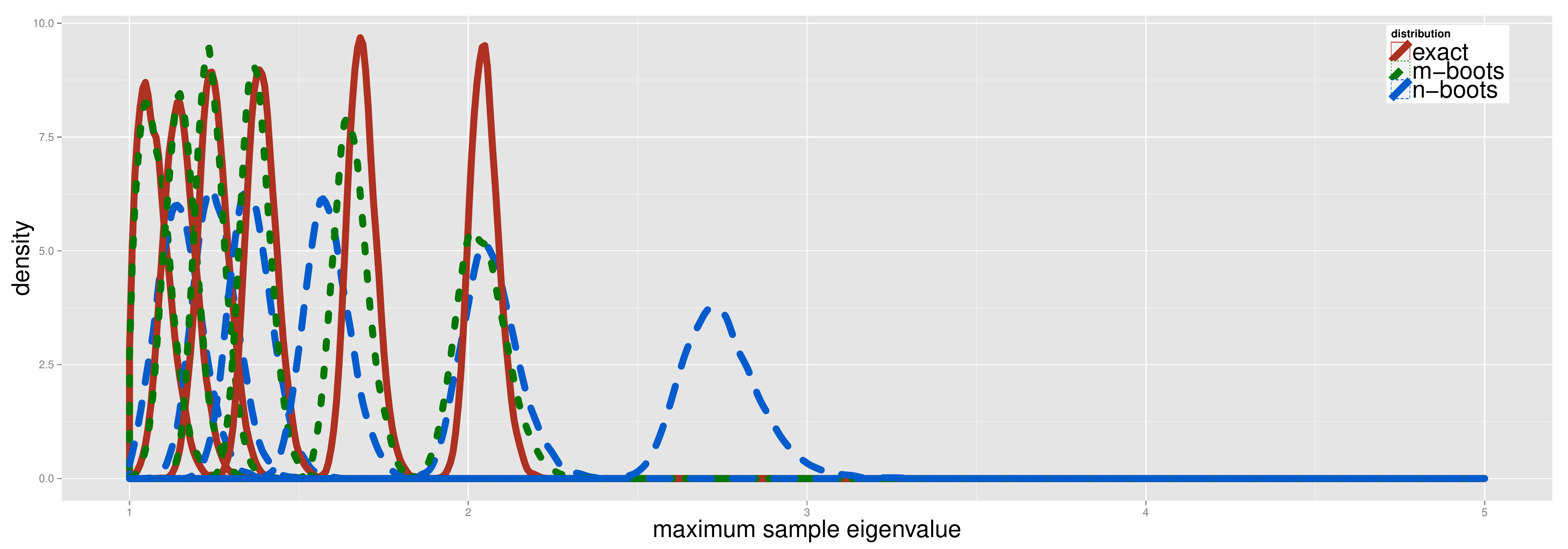} 
\\[-4pt]
$n=500$
\\
\includegraphics[width=0.95\textwidth,angle=0]{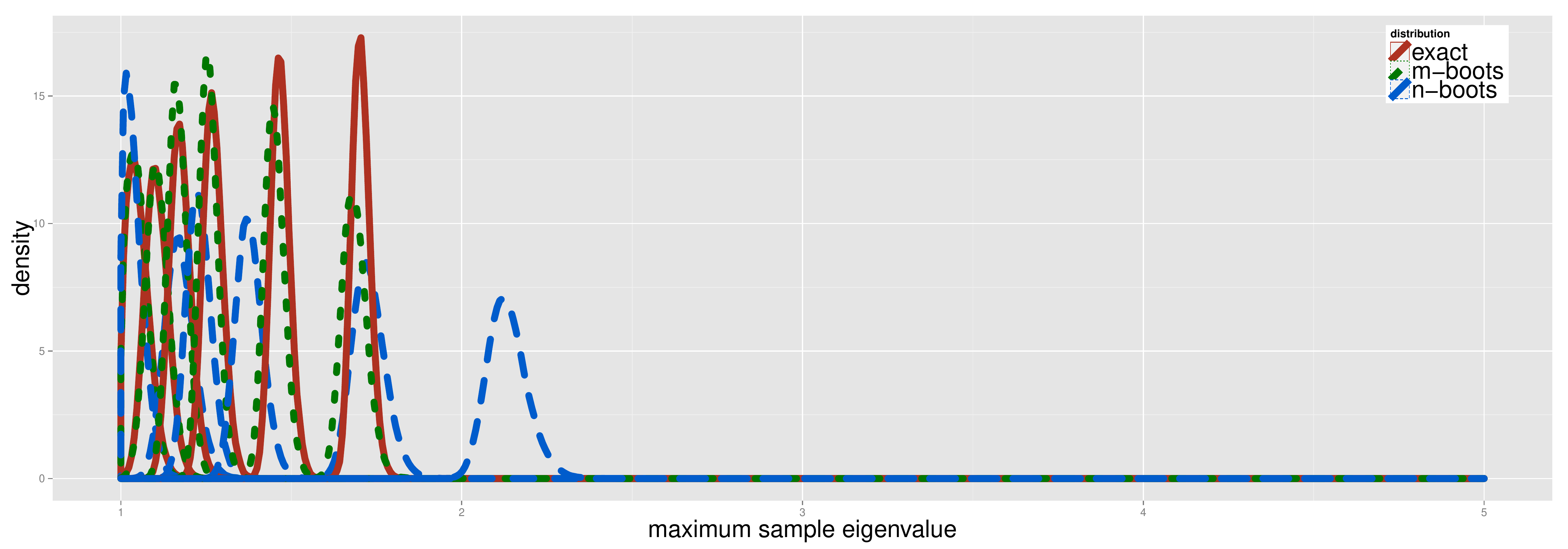} 
\\[-4pt]
$n=1,000$
\\[-10pt]
\end{tabular}
 \end{center}
 \caption{Density plots of three approximation distributions based on $n=200, 500$, and $1,000$ data points (from top to bottom) randomly drawn from the standard Gaussian distribution. These three distributions are: (i) the exact distribution of $\lambda_{\max}(\hat\bSigma)$ (red, solid, denoted as ``exact"); (ii) the distribution from the multiplier-bootstrap method (green, dotted, denoted as ``m-boots"); (iii) the distribution from the nonparametric-bootstrap method (blue, broken, denoted as ``n-boots"). Within each graph, for each distribution, the curves correspond to the setting $d=2,4,10,20,50,100$ from left to right. The calculation is based on 40,000 replications.}\label{fig:1}
\vspace{-0.1in}
\end{figure}

Theorem \ref{thm:limiting_dist3} shows that the distributions of $\lambda_{\min}(\hat\bSigma)$ and $\lambda_{\max}(\hat\bSigma)$ can be consistently estimated by those of their bootstrapped counterparts. Such an approximation is data adaptive, and does not require any parametric or semiparametric (such as elliptical distribution) assumption on the data generating scheme. Thus, such an inference procedure enjoys the distribution-free property \citep{kendall1977advanced}. In addition, the implementation is simple, and does not involve any tuning parameter.

Note that in Theorem \ref{thm:limiting_dist3}, we allow the dimension to slightly increase with the sample size. This is a more relaxed setting than that for the conventional bootstrap inference \citep{beran1985bootstrap}. Assumption \ref{cdt1} is required for the case that $d$ increases. However, when $d$ is fixed, this assumption can be easily relaxed. 

Finally, a comment on the scaling condition $d=o(n^{1/9})$ could be instructive. In detail, we aim to explore how sharp this condition is. For this, we generate $n=200, 500$, and $1,000$ data points from the multivariate standard Gaussian $\bX_1,\ldots,\bX_n\sim N_d(\zero,\Ib_d)$. We increase $d$ from 2, 5, 10, 20, 50, to 100. Figure \ref{fig:1} illustrates the exact distribution of $\lambda_{\max}(\hat\bSigma)$ (denoted as ``exact") and its multiplier-bootstrap and nonparametric bootstrap based counterparts (denoted as ``m-boots" and ``n-boots"). 

Figure \ref{fig:1} shows that, for $n=200$, the multiplier-bootstrap-based approach well approximates the exact distribution for $d$ not greater than 10. For $n=500$ and $1,000$, the dimension $d$ can be as large as $20$ and $50$ to ensure reasonable approximation results. This indicates that the dependence between $n$ and $d$ to guarantee efficient bootstrap approximation is almost linear, while we do need $d$ to be reasonably small compared to $n$. Also, Figure \ref{fig:1} shows that the nonparametric-bootstrap method leads to an extremely biased estimate of the distribution of $\lambda_{\max}(\hat\bSigma)$.

\section{Application II: two-sample Roy's largest root test}\label{sec:app2}

In multivariate analysis, tests for the equality of covariance matrices are of central interest \citep{anderson1958introduction}. High dimensionality brings  new challenges, and many existing methods cannot be used. 

Let $\bX_1,\ldots,\bX_{n}$ and $\bY_1,\ldots,\bY_{m}$ be $n$ and $m$ independent realizations of centered random vectors $\bX\in\reals^d$ and $\bY\in\reals^d$ with covariance matrices $\bSigma_1$ and $\bSigma_2$. In this section, we aim to test the hypothesis
\[
{\bf H_0}:\bSigma_1=\bSigma_2 \ \ \mbox{ versus } \ \ {\bf H_1}:\bSigma_1 \neq \bSigma_2
\]
under the scenario where $d$ is allowed to grow with $n$ and $m$.

We first briefly review the literature on testing ${\bf H_0}$ in high dimensions. \cite{johnstone2013roy} pointed out that the most common tests fall into two categories: the first is based on ``linear statistics" of the eigenvalues, and the second is based on extreme value statistics. In the high dimensional setting, \cite{chen2010tests}, \cite{li2012two}, \cite{cai2013optimal2}, among many others, have proposed tests based on linear statistics. Asymptotic normality is established even when $d/n$ tends to infinity. Initiated by \cite{jiang2004asymptotic}, another track of tests is developed based on extreme values of the entries of the sample covariance matrix. \cite{cai2013two} studied the problem of testing the equality of two unknown covariance matrices for possibly non-Gaussian data in the ``sparsity" scenario. Recently, \cite{chang2015bootstrap} proposed a bootstrap procedure to conduct inference for the test statistic considered in \cite{cai2013two} and relaxed the sparsity assumption.

Though significant progress has been made in this area, there has not been much research on Roy's largest root type tests \citep{roy1957some}, an important method in covariance testing. Such tests are built on extreme eigenvalues and are optimal against low-rank alternatives \citep{kritchman2009non}. Absence of the corresponding results in high dimensions is largely due to the uncommon behavior of extreme eigenvalues. Built on the results derived in Section \ref{sec:main}, we are able to fill this gap. This is done via exploiting a sparse-PCA-type thinking which is advocated by Iain Johnstone and many others \citep{johnstone2009consistency, cai2013sparse,ma2013sparse} in dealing with large random matrices. The techniques we developed here generalize those in \cite{cai2013two} and \cite{chang2015bootstrap}, and are of independent interest.

\subsection{Method}

The proposed test extends Roy's largest root test to high dimensions. In detail, let $\hat\bSigma_1$ and $\hat\bSigma_2$ be the sample covariance matrices given by
\[
\hat\bSigma_1 =\frac{1}{n}\sum_{i=1}^{n} \bX_i\bX_i^\T~~{\rm and}~~\hat\bSigma_2 =\frac{1}{m}\sum_{i=1}^{m} \bY_i \bY_i^\T. 
\]
Let $s$ be a prespecified parameter characterizing the sparsity level we wish to balance. To guarantee valid inference, we recommend $s$ to be chosen less than $\min(n,m)/10$. Recall in \eqref{cdt0} that $\mathbb{V}(s,d)$ represents the set of all $s$-sparse vectors in the unit sphere $\mathbb{S}^{d-1}$.
Let $\hat\cQ_{\bv}$ and $\hat\cQ_{\max}$ be the normalized rank-one projection and normalized $s$-sparse largest singular value of $\hat\bSigma_1-\hat\bSigma_2$:
\[
\hat\cQ_{\bv} = \sqrt{\frac{n+m}{nm}} \frac{\bv^\T(\hat\bSigma_1-\hat\bSigma_2)\bv}{\bv^\T(\hat\bSigma_1/n+\hat\bSigma_2/m)\bv}~~{\rm and}~~\hat\cQ_{\max} =\sup_{\bv\in\mathbb{V}(s,d)}| \cQ_{ \bv}|.
\]
The proposed test is multiplier-bootstrap-based. In detail, we define
\begin{align*}
\hat\cB_{ \bv} =\sqrt{\frac{n+m}{nm}} \frac{\bv^\T \{ \sum_{i=1}^{n}\xi_i(\bX_i\bX_i^\T-\hat\bSigma_1)/n -  \sum_{i=1}^{m}\eta_i(\bY_i\bY_i^\T-\hat\bSigma_2)/ m \} \bv}{\bv^\T(\hat\bSigma_1/n+\hat\bSigma_2/m)\bv}
\end{align*}
and
\[
\hat\cB_{\max}   = \sup_{\bv\in\mathbb{V}(s,d)} | \hat\cB_{ \bv}|,
\]
where $\xi_1,\ldots,\xi_{n},  \eta_{1},\ldots, \eta_{m}$ are independent standard Gaussian random variables that are independent of $\{\bX_i\}_{i=1}^{n}$ and $\{\bY_i\}_{i=1}^{m}$. Let $\cB_1^*,\ldots, \cB_N^*$ be $N$ realizations of $\hat \cB_{\max}$ (via fixing the data and changing $\xi_1,\ldots,\xi_{n}, \eta_{1},\ldots, \eta_{m}$) for some large enough $N$\footnote{In the sequel, for ease of presentation, we focus on the ideal case that we know the exact conditional distribution of $\hat \cB_{\max}$ given the data. This is equivalent to setting $N$ infinitely large. In practice, the accuracy of bootstrap by setting a finitely large $N$ is guaranteed by the Dvoretzky-Kiefer-Wolfowitz inequality \citep{dvoretzky1956asymptotic}.}. Let $q_{\alpha}$ be the corresponding $(1-\alpha)$-th quantile. The proposed test is
\begin{align}\label{eq:hantest}
T_{\alpha}  = \ind(\hat\cQ_{\max} \geq  q_{\alpha}),
\end{align}
and we reject ${\bf H_0}$ whenever $T_{\alpha}=1$.

\begin{remark} {\rm
Computing the extreme eigenvalues of large covariance matrices under sparsity constraint involves a combinatorial optimization and is NP-complete in general. Several computationally feasible methods based on the recent developments in the sparse PCA literature can be used to compute $\hat\cQ_{\max}$ approximately. The theoretical guarantees, however, remain unclear. For example, a greedy search to shrink the candidate set \citep{moghaddam2005spectral}, followed by a second-step brutal search, may work well in practice. Recently, \cite{berthet2013spca} proposed a computationally efficient method using convex relaxations to compute their sparse eigenvalue statistic for sparse principal component testing. It is interesting to investigate whether their method can be adapted to deal with the current problem. We leave this to future work.}
\end{remark}

\begin{remark}{\rm
A ``non-normalized" version of the test $T_{\alpha}$, based on the restricted spectral gap $\sup_{\bv\in{\mathbb{V}}(s)}|\bv^\T(\hat\bSigma_1-\hat\bSigma_2)\bv|$, can be similarly defined and calculated using the truncated power method \citep{yuan2013truncated}. 
However, boundeness on the restricted eigenvalue $\lambda_{\max, s}(\bSigma_1 )$ is required for the validity of the non-normalized test.}
\end{remark}

\subsection{Theory}

This section provides the theoretical properties of $T_{\alpha}$ in \eqref{eq:hantest}. First, we show that the size of the test is well controlled. Secondly, we study the power and prove the minimax optimality for the proposed test against ``low-rank" alternatives.

To ensure the size consistency of $T_{\alpha}$, we require the following two assumptions on the data generating scheme. They are analogous to those in Section \ref{sec:main}. Of note, we do not require $\bX\stackrel{\sf d}{=}\bY$.

\begin{assumption}\label{cdt5}
There exist random vectors $\bU_1,\bU_2 \in\reals^d$ satisfying $\E ( \bU_i )=\zero$ and $\E ( \bU_i\bU_i^{\T} )=\Ib_d$ for $i=1,2$, such that
\begin{align*}
\bX=\bSigma_1^{1/2}\bU_1, ~~\bY=\bSigma_2^{1/2}\bU_2, ~~L_1 :=\max\bigg( \sup_{\bv\in\mathbb{S}^{d-1}}\|\bv^{\T}\bU_1\|_{\psi_2}, \sup_{\bv\in\mathbb{S}^{d-1}}\|\bv^{\T}\bU_2\|_{\psi_2}\bigg)<\infty .
\end{align*}
\end{assumption}
\begin{assumption}\label{cdt6}
$\{\bX_i\}_{i=1}^{n}$ and $\{\bY_i\}_{i=1}^{m}$ are independent realizations of $\bX$ and $\bY$, respectively. Moreover, the sample sizes are comparable, i.e., $n\asymp m$.
\end{assumption}

The following result provides the theoretical guarantee for the validity of the multiplier bootstrap test $T_{\alpha}$. 

\begin{theorem}\label{thm:testing} Let Assumptions \ref{cdt5} and \ref{cdt6} be satisfied. Under the null hypothesis ${\bf H_0}$, we have
\begin{align}\label{eq:han1}
\sup_{t\geq 0}\bigg| \P( \hat\cQ_{\max} \leq t) - \P( \hat\cB_{\max} \leq t \, | \bX_1,\ldots,\bX_{n},\bY_1,\ldots,\bY_{m} )  \bigg| = o_{\mathbb{P}}(1) 
\end{align}
whenever $s\log\{ \gamma_s(\bSigma_1) ed/s\} \vee s\log n=o(n^{1/9})$. Moreover, as $n, m \to \infty$,
\[
\P_{\bf H_0}(T_{\alpha}=1)=\alpha+o(1)
\]
uniformly in $0<\alpha<1$.
\end{theorem}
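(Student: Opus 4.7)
The plan is to mimic the proof architecture of Theorems~\ref{thm:limiting_dist} and~\ref{thm:multiplier_bootstrap}, adapted to the pooled two-sample setting, and then obtain the size consistency as an immediate corollary of \eqref{eq:han1}. First, I would pool the two samples into a single stream: define $\bZ_i = \bX_i/\sqrt{n}$ for $i=1,\ldots,n$ and $\bZ_{n+j} = -\bY_j/\sqrt{m}$ for $j=1,\ldots,m$, so that under ${\bf H_0}$ (where $\bSigma_1=\bSigma_2=:\bSigma$) the numerator of $\hat\cQ_{\bv}$ is, up to scaling, a sum of $n+m$ independent centered terms $\bv^{\T}(\bZ_i \bZ_i^{\T} - \E\bZ_i\bZ_i^{\T})\bv$. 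Next, I would show that the studentized denominator $\bv^{\T}(\hat\bSigma_1/n + \hat\bSigma_2/m)\bv$ is uniformly close (for $\bv\in\mathbb{V}(s,d)$) to its deterministic counterpart $\bv^{\T}\bSigma\bv\cdot(n+m)/(nm)$. This uniform ratio bound follows from a sparse Bernstein/Hanson--Wright argument together with the sparse $\epsilon$-net lemma, and under Assumption~\ref{cdt5} and $n\asymp m$ (Assumption~\ref{cdt6}), the ratio is bounded below by a positive constant with probability tending to one. After this substitution, $\hat\cQ_{\max}$ is reduced, up to negligible error, to a normalized rank-one statistic of the pooled data that has exactly the same structure as $\hat Q_{\max}$ in Section~\ref{sec:main}.

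Second, I would apply Lemma~\ref{lem:obs} with $\epsilon\asymp (nm)^{-1/2}\gamma_s^{-1}(\bSigma)$ to pass from the supremum over $\mathbb{V}(s,d)$ to the maximum over a finite $\epsilon$-net $\mathbb{N}_\epsilon$ with $\log|\mathbb{N}_\epsilon|\lesssim s\log\{\gamma_s(\bSigma)\cdot ed/s\}\vee s\log n$. Then I would invoke Lemma~\ref{lem:coupling_ineq} (the Chernozhukov--Chetverikov--Kato Gaussian comparison inequality) on the discretized pooled statistic: the sub-Gaussian Assumption~\ref{cdt5} yields the required third-moment and tail bounds on the cross products $\bv_j^{\T}\bZ_i\bZ_i^{\T}\bv_j - \E[\bv_j^{\T}\bZ_i\bZ_i^{\T}\bv_j]$, and the same fourth-order concentration arguments that drive Theorem~\ref{thm:limiting_dist} give the bounds on $D_1, D_2, D_3$ of Lemma~\ref{lem:coupling_ineq}. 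An anti-concentration bound for maxima of Gaussian sequences (as used in the proof of Theorem~\ref{thm:limiting_dist}) then bridges the $\epsilon$-discretization error and the Gaussian approximation error, yielding a Kolmogorov distance bound of order $\delta_{n,m}^{9/8}/(n\wedge m)^{1/8}$ with $\delta_{n,m} := s\log\{\gamma_s(\bSigma_1)\cdot ed/s\}\vee s\log n$.

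Third, I would establish \eqref{eq:han1} for the multiplier bootstrap $\hat\cB_{\max}$ by the same template used in Theorem~\ref{thm:multiplier_bootstrap}. Conditional on $\{\bX_i,\bY_j\}$, the summands $\xi_i(\bX_i\bX_i^{\T}-\hat\bSigma_1)$ and $\eta_j(\bY_j\bY_j^{\T}-\hat\bSigma_2)$ are independent centered Gaussian-scaled vectors, so conditional Gaussian comparison reduces to bounding the conditional covariances and comparing them with the unconditional ones $\E(R_jR_k)$ from Theorem~\ref{thm:limiting_dist}. This is a fourth-moment convergence statement and follows by a sparse concentration argument analogous to the one used in Section~\ref{sec:main}. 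Combining the Gaussian approximation of $\hat\cQ_{\max}$, the conditional Gaussian approximation of $\hat\cB_{\max}$, and the fact that two centered Gaussian maxima with close covariance matrices have close distributions (via a Gaussian-to-Gaussian comparison bound), we obtain \eqref{eq:han1}. Finally, the size statement $\P_{\bf H_0}(T_\alpha=1)=\alpha+o(1)$ uniformly in $\alpha\in(0,1)$ follows from \eqref{eq:han1} by a standard argument: since the bootstrap quantile $q_\alpha$ is the $(1-\alpha)$-quantile of the conditional distribution of $\hat\cB_{\max}$, the uniform Kolmogorov closeness in \eqref{eq:han1} forces $\P(\hat\cQ_{\max}\geq q_\alpha)\to \alpha$ uniformly.

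The main obstacle will be controlling the studentized denominator uniformly over $\mathbb{V}(s,d)$ while simultaneously tracking the sparse eigenvalue ratio $\gamma_s(\bSigma_1)$ in the error bound, since this quantity may grow with $n$ and $d$ under the scaling $s\log\{\gamma_s(\bSigma_1)ed/s\}\vee s\log n=o(n^{1/9})$; a naive sub-Gaussian concentration bound for the denominator scales with $\lambda_{\max,s}(\bSigma_1)$, which, as noted in the Remark following Theorem~\ref{thm:limiting_dist2}, is exactly what the normalization is designed to cancel. Carrying through this cancellation in the two-sample setting—separating the analysis of the numerator's sparse sub-exponential increments from the denominator's multiplicative concentration, and stitching them together via the generalized $\epsilon$-net Lemma~\ref{lem:obs}—is the delicate part.
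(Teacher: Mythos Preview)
Your overall architecture---replace the random denominator by its deterministic counterpart, discretize via the $\epsilon$-net Lemma~\ref{lem:obs}, apply the Gaussian comparison Lemma~\ref{lem:coupling_ineq}, then match bootstrap and Gaussian covariances---is exactly the paper's route (Lemmas~\ref{lem:testinglem1}, \ref{lem:testinglem2}, \ref{lem:testinglem3}). The order of the first two steps is swapped relative to the paper (which first passes to a statistic $\cQ_{\max}$ with deterministic denominator $\bv^{\T}\bSigma_2\bv$ and only then discretizes), but this is inessential.

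There is, however, a concrete error in your pooling device. Setting $\bZ_{n+j}=-\bY_j/\sqrt{m}$ does \emph{not} produce the numerator $\bv^{\T}(\hat\bSigma_1-\hat\bSigma_2)\bv$: the sign is lost under $\bZ\bZ^{\T}$, so $\sum_{i}\bv^{\T}(\bZ_i\bZ_i^{\T}-\E\bZ_i\bZ_i^{\T})\bv=\bv^{\T}(\hat\bSigma_1-\bSigma)\bv+\bv^{\T}(\hat\bSigma_2-\bSigma)\bv$, a sum rather than a difference. This matters because your plan leans on this quadratic-in-$\bZ_i$ structure to import the one-sample machinery wholesale (in particular the Mendelson-type bound behind Lemma~\ref{lem:connecting}, which is stated for $f^2(\bX_i)$ terms).

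The paper avoids this trap in two ways. For the concentration/expectation bound it does not pool at all: it splits $\cQ_{\max}\leq \sqrt{m/(n+m)}\,\cQ_{\max,1}+\sqrt{n/(n+m)}\,\cQ_{\max,2}$ into two genuine one-sample suprema and applies Lemma~\ref{lem:connecting} to each (see \eqref{eq:n1bound}--\eqref{eq:n2bound}). For the coupling step it defines the independent summands $\cR_{ij}$ \emph{directly} as signed scalar combinations (equation~\eqref{eq:Rijdef}), not through a $\bZ_i\bZ_i^{\T}$ representation, so that Lemma~\ref{lem:coupling_ineq} applies to the $m$-vector $(\cR_{ij})_j$ with the correct signs built in. Once you replace your pooling by either of these devices, the rest of your sketch goes through and matches the paper.
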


Next we analyze the power of the test $T_{\alpha}$ and show that it is minimax rate-optimal. For this, we consider the alternative class of matrices
\begin{align*}
\mathbb{M}(\lambda) := \bigg\{(\Mb_1,\Mb_2)\in& \reals^{d\times d}:  \, \lambda_{\max, s}( \Mb_\ell )/\lambda_{\min, s}( \Mb_\ell  ) \leq C ,\, \ell = 1, 2,  \\
&\sqrt{\frac{n+m}{nm}} \sup_{\bv\in\mathbb{V}(s,d)}\bigg|\frac{\bv^\T(\Mb_1-\Mb_2)\bv}{\bv^\T(\Mb_1/n+\Mb_2/m)\bv}\bigg|\geq \lambda\sqrt{s\log (ed/s)} \bigg\}
\end{align*}
for some constant $C>0$ independent of $(n,m,d)$. The following two theorems illustrate the power and minimax lower bound in differentiating two covariance matrices within some matrix set $\mathbb{M}(\lambda)$.

\begin{theorem}[Power analysis]\label{thm:power} Suppose that Assumptions \ref{cdt5} and \ref{cdt6} hold. Further assume that $(n,m,d,s)$ satisfies $s\log (ed/s)\log n=o(n)$. Then, for all sufficiently large $\lambda>0$, we have
\[
\inf_{(\bSigma_1,\bSigma_2)\in\mathbb{M}(\lambda)}\P_{(\bSigma_1,\bSigma_2)}(T_{\alpha}=1) \to 1, \ \ \mbox{ as } n \to \infty,
\]
where $\P_{(\bSigma_1,\bSigma_2)}$ represents the joint distribution of $\bX_1,\ldots,\bX_n,\bY_1,\ldots,\bY_m$ 
with covariance matrices $\bSigma_1$ and $\bSigma_2$.
\end{theorem}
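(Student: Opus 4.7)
The plan is to prove $T_\alpha = 1$ with probability tending to $1$ by combining a high-probability lower bound on $\hat\cQ_{\max}$ with a matching upper bound on the bootstrap critical value $q_\alpha$, both of order $\sqrt{s\log(ed/s)}$, and then taking $\lambda$ larger than a suitable multiple of the upper-bound constant. The crucial structural fact exploited throughout is the restricted-eigenvalue ratio bound built into $\mathbb{M}(\lambda)$, which guarantees that the denominator in $\hat\cQ_\bv$ is non-degenerate and comparable to its population counterpart uniformly over $\bv \in \mathbb{V}(s,d)$.

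\textbf{Step 1 (upper bound on $q_\alpha$).} I would first show that $q_\alpha \le C_0\sqrt{s\log(ed/s)}$ with probability $1-o(1)$, where $C_0$ depends only on $L_1$, on the ratio constant $C$ in $\mathbb{M}(\lambda)$, on the ratio $n/m$, and on $\alpha$. Conditional on the data, $\{\hat\cB_\bv : \bv \in \mathbb{V}(s,d)\}$ is a centered Gaussian process, and a direct calculation using Assumption \ref{cdt5} together with the ratio bound yields $\Var(\hat\cB_\bv \mid \text{data}) = O_P(1)$ uniformly in $\bv$. A two-sample bootstrap analogue of Lemma \ref{lem:obs}, combined with the uniform empirical process control $\sup_{\bv\in\mathbb{V}(s,d)}|\bv^{\T}(\hat\bSigma_\ell - \bSigma_\ell)\bv| = O_P(\sqrt{s\log(ed/s)/n})$ that is already used in the proofs of Theorems~\ref{thm:multiplier_bootstrap} and \ref{thm:testing}, then reduces $\hat\cB_{\max}$ to the maximum over an $\epsilon$-net of cardinality $\exp(O(s\log(ed/s)))$ up to $o_P(1)$ error. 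The standard Gaussian-maximum tail bound delivers the claimed quantile estimate.

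\textbf{Step 2 (lower bound on $\hat\cQ_{\max}$).} For any $(\bSigma_1,\bSigma_2) \in \mathbb{M}(\lambda)$, pick a near-maximizer $\bv^* \in \mathbb{V}(s,d)$ satisfying
\[
\sqrt{\frac{n+m}{nm}}\,\frac{|\bv^{*\T}(\bSigma_1 - \bSigma_2)\bv^*|}{\bv^{*\T}(\bSigma_1/n + \bSigma_2/m)\bv^*} \;\ge\; \tfrac{1}{2}\lambda\sqrt{s\log(ed/s)}.
\]
I would then expand $\hat\cQ_{\bv^*}$ around this population ratio, with remainders driven by $\bv^{*\T}(\hat\bSigma_\ell - \bSigma_\ell)\bv^*$ for $\ell=1,2$. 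Pointwise sub-Gaussian concentration at the fixed direction $\bv^*$ bounds these fluctuations by $O_P(\bv^{*\T}\bSigma_\ell\bv^*/\sqrt{n})$; after the $\sqrt{(n+m)/(nm)}$ rescaling and cancellation with the denominator of the same order $\bv^{*\T}\bSigma_1\bv^*/n + \bv^{*\T}\bSigma_2\bv^*/m$, their total contribution to $\hat\cQ_{\bv^*}$ is $O_P(1)$ under Assumption \ref{cdt6}. Consequently,
\[
\hat\cQ_{\max} \ge |\hat\cQ_{\bv^*}| \ge \tfrac{1}{2}\lambda\sqrt{s\log(ed/s)}\,(1-o_P(1)) - O_P(1) \ge \tfrac{1}{4}\lambda\sqrt{s\log(ed/s)}
\]
with probability $1-o(1)$, once $\lambda$ is large enough.

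\textbf{Step 3 (combination).} Taking $\lambda > 4C_0$ and intersecting the two high-probability events forces $\hat\cQ_{\max} > q_\alpha$, hence $T_\alpha = 1$. Because the constants involved depend only on $L_1$, the ratio constant, and $n/m$, the bound is uniform in $(\bSigma_1,\bSigma_2) \in \mathbb{M}(\lambda)$, giving the claim. The main obstacle is Step~1: establishing that the two-sample bootstrap approximation remains valid uniformly \emph{under the alternative} and that the resulting Gaussian-maximum has uniformly bounded conditional variances. This requires the uniform concentration $\sup_{\bv\in\mathbb{V}(s,d)}|\bv^{\T}(\hat\bSigma_\ell - \bSigma_\ell)\bv| = O_P(\sqrt{s\log(ed/s)/n})$, which is exactly the type of empirical process bound developed in Section~\ref{sec:main}; the scaling condition $s\log(ed/s)\log n = o(n)$ is precisely what keeps this error negligible compared to the denominator of order $(n+m)/(nm)$.
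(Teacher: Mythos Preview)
Your three–step skeleton (upper bound on $q_\alpha$, lower bound on $\hat\cQ_{\max}$, combine) matches the paper's proof, and your Step~2 is essentially the same as the paper's argument via the population-denominator statistic $\cQ'_{\max}$ and the remainders $H_5,H_6$; working at a single near-maximizer $\bv^*$ is a perfectly valid variant.

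The real issue is in Step~1. You propose to control $\hat\cB_{\max}$ by bounding the conditional variance $\Var(\hat\cB_{\bv}\mid\text{data})$ uniformly in $\bv$ and then applying a Gaussian-maximum bound over an $\epsilon$-net. But that conditional variance is, up to the denominator replacement, proportional to $n^{-1}\sum_{i=1}^n(\bv^\T\bX_i)^4/(\bv^\T\bSigma_1\bv)^2$, a \emph{fourth}-order empirical quantity. Controlling this uniformly over $\mathbb{V}(s,d)$ is not implied by the second-order concentration $\sup_{\bv}|\bv^\T(\hat\bSigma_\ell-\bSigma_\ell)\bv|=O_P\{\sqrt{s\log(ed/s)/n}\}$ that you cite, and a direct $\epsilon$-net plus sub-Weibull concentration argument for the fourth moments requires a scaling strictly stronger than the stated $s\log(ed/s)\log n=o(n)$ (roughly an extra $\log n$ factor or more). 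A crude bound via $\max_i(\bv^\T\bX_i)^2$ only yields $\sigma_{\max}^2=O_P(\log n+s\log(ed/s))$, which after the Gaussian-maximum step gives $q_\alpha=O_P(s\log(ed/s))$ rather than $O_P\{\sqrt{s\log(ed/s)}\}$.

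The paper sidesteps this by \emph{not} passing through the conditional variance at all. It writes $\hat\cB_{\max}$ (after the denominator replacement) as a sum of four suprema $H_1,\ldots,H_4$, and for the leading term $H_1=\sup_{\bv}|n^{-1}\sum_i\xi_i\,\bv^\T(\bX_i\bX_i^\T-\bSigma_1)\bv|$ it truncates the Gaussian multipliers at $\tau\sqrt{\log n}$. The truncated product $\bar\xi_i\cdot\bv^\T(\bX_i\bX_i^\T-\bSigma_1)\bv$ is then sub-exponential with parameter of order $\sqrt{\log n}$, so an $\epsilon$-net union bound plus a Chernoff/Markov argument succeeds precisely when $s\log(ed/s)\log n=o(n)$. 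This truncation of the multipliers is the missing idea in your Step~1 and is exactly where the stated scaling condition enters.
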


\begin{theorem}[Minimax lower bound]\label{thm:lower_bound}
Assume that the conditions in Theorem~\ref{thm:power} hold. Then, for all sufficiently small $\lambda>0$, we have
\begin{align*}
\inf_{\Phi_\alpha \in \cT_{\alpha}}\sup_{(\bSigma_1,\bSigma_2)\in\mathbb{M}(\lambda)}\P_{(\bSigma_1,\bSigma_2)}( \Phi_\alpha =0)\geq 1-\alpha-o(1), \ \ \mbox{ as } n \to \infty,
\end{align*}
where $\cT_{\alpha} = \{ \Phi_\alpha  : \P _{\bf H_0}(\Phi_\alpha = 1) \leq \alpha \}$ denotes the class of all $\alpha$-level tests.
\end{theorem}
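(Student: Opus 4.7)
The plan is a Le Cam two-point reduction between the null and a mixture of Gaussian alternatives indexed by sparse subsets. Restricting the supremum to this Gaussian subfamily can only decrease it, so the resulting bound remains valid for $\mathbb{M}(\lambda)$.

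\textbf{Construction of the alternatives.} I would take $\bY_j \sim N_d(\zero, \Ib_d)$ i.i.d.\ under both hypotheses, and independently $\bX_i \sim N_d(\zero, \bSigma_1)$. Under $H_0$, set $\bSigma_1 = \Ib_d$; for each $S \subset [d]$ with $|S| = s$, let $P_S$ correspond to $\bSigma_1 = \Ib_d + \theta_n \bv_S \bv_S^\T$ with $\bv_S = s^{-1/2}\mathbf{1}_S$ and $\theta_n = c\lambda\sqrt{s\log(ed/s)/n}$ for a suitably small constant $c$. Since $s\log(ed/s) = o(n)$ under the assumption of Theorem~\ref{thm:power}, $\theta_n = o(1)$, so $\lambda_{\max,s}(\bSigma_1) = 1 + \theta_n \leq 2$ and $\lambda_{\min,s}(\bSigma_1) = 1$; thus $\gamma_s(\bSigma_1)$ is bounded. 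Plugging $\bw = \bv_S$ into the supremum defining $\mathbb{M}(\lambda)$ and using $n \asymp m$ produces separation of order $\theta_n\sqrt{n} \asymp \lambda \sqrt{s\log(ed/s)}$, so each $(\bSigma_1, \Ib_d) \in \mathbb{M}(\lambda)$ for an appropriate $c$.

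\textbf{Mixture reduction.} I would place the uniform prior $\pi$ on $S$ over $\binom{[d]}{s}$ and let $\bar P = \sum_S \pi(S) P_S$ be the averaged alternative, with $P_0$ the null joint law of $(\bX_{1:n}, \bY_{1:m})$. For any $\Phi_\alpha \in \cT_\alpha$,
\begin{align*}
\sup_{(\bSigma_1, \bSigma_2) \in \mathbb{M}(\lambda)} \P_{(\bSigma_1, \bSigma_2)}(\Phi_\alpha = 0) \geq \bar P(\Phi_\alpha = 0) \geq P_0(\Phi_\alpha = 0) - \mathrm{TV}(\bar P, P_0) \geq 1 - \alpha - \mathrm{TV}(\bar P, P_0),
\end{align*}
so it suffices to show $\mathrm{TV}(\bar P, P_0) = o(1)$. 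Since $2\mathrm{TV}^2 \leq \chi^2(\bar P \,\|\, P_0)$ by Cauchy--Schwarz, it is enough to prove $\chi^2(\bar P \,\|\, P_0) = o(1)$.

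\textbf{Chi-squared computation and main obstacle.} Because $\bY_j$ has identical law under $P_S$ and $P_0$, only the $\bX_i$'s contribute. A standard Gaussian second-moment identity based on $\E_0[\exp(\tfrac{1}{2} Z^\T A Z)] = \det(\Ib - A)^{-1/2}$ gives, for $S, S'$ i.i.d.\ $\sim \pi$,
\begin{align*}
\chi^2(\bar P \,\|\, P_0) + 1 = \E_{S, S'}\bigl[(1 - \theta_n^2 \langle \bv_S, \bv_{S'}\rangle^2)^{-n/2}\bigr] = \E\bigl[(1 - \theta_n^2 (|S\cap S'|/s)^2)^{-n/2}\bigr].
\end{align*}
Applying $-\log(1-x) \leq 2x$ for $x \leq 1/2$ together with $n\theta_n^2 \asymp \lambda^2 s\log(ed/s)$ reduces the task to showing $\E[\exp(C\lambda^2 \log(ed/s)\, |S\cap S'|^2/s)] = 1 + o(1)$, where $|S \cap S'|$ is hypergeometric with mean $s^2/d$. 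The main obstacle lies here: because the exponent is quadratic rather than linear in $|S \cap S'|$, a direct sub-Gaussian MGF bound does not apply and one must combine the bulk concentration of $|S \cap S'|$ around $s^2/d$ with a sharp combinatorial upper-tail estimate, e.g.\ $\P(|S\cap S'| \geq k) \leq \bigl(es^2/(k(d-s))\bigr)^k$, to show that the heavy-tail contribution remains summable for all $\lambda$ below an absolute threshold. Calibrating this threshold is the crux of the lower bound.
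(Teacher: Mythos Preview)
Your proposal is correct and follows the same overall strategy as the paper: restrict to a Gaussian rank-one-spike subfamily with $\bSigma_2=\Ib_d$, apply the Le Cam mixture reduction to get $1-\alpha-\mathrm{TV}$, and then control $\mathrm{TV}$ through the $\chi^2$-divergence. The differences are in the details of the prior and the $\chi^2$ bound. The paper places the Haar-uniform prior on all of $\mathbb{V}(s,d)$ (so the direction within each support is random on the sphere) and does not compute $\chi^2$ directly; instead it invokes Proposition~2 of Cai, Ma and Wu (2015), which already packages the full second-moment/overlap calculation and yields $\chi^2\leq g(\beta_0)-1$ with $g(0+)=1$. Your route uses the discrete binary prior $\bv_S=s^{-1/2}\mathbf 1_S$, derives the explicit affinity $(1-\theta_n^2\langle\bv_S,\bv_{S'}\rangle^2)^{-n/2}$, and reduces to a hypergeometric tail bound; this is exactly the computation that the cited result hides. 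Both priors work and give the same rate; your version is more self-contained, while the paper's is shorter by outsourcing the combinatorics. One small slip: you want $c$ to be a fixed absolute constant (large enough so that $(\bSigma_1,\Ib_d)\in\mathbb{M}(\lambda)$), not ``suitably small''; the smallness needed for the $\chi^2$ bound is then supplied by taking $\lambda$ small, just as in the paper.
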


\begin{remark}{\rm 
The scaling condition, $s\log (ed/s)\log n=o(n)$, in Theorems \ref{thm:power} and \ref{thm:lower_bound}, is weaker than the corresponding one in Theorem \ref{thm:testing}.  Accordingly, combining Theorems \ref{thm:testing}, \ref{thm:power}, and \ref{thm:lower_bound}, under Assumptions \ref{cdt5} and \ref{cdt6}, the scaling condition in Theorem \ref{thm:testing}, and the boundedness assumption on $\lambda_{\max, s}( \bSigma_\ell )/\lambda_{\min, s}(\bSigma_\ell)$ for $\ell=1,2$, the proposed test is minimax rate-optimal. }
\end{remark}

\begin{remark}{\rm
The $\log n$ term in the scaling condition, $s\log (ed/s)\log n=o(n)$, is required for handling the multiplier-bootstrap-based statistic $\hat\cB_{\max}$, which involves the cubes of sub-Gaussian random variables. We tackle it via a truncation argument, which is also exploited in \cite{cai2011adaptive} and \cite{cai2013two}. Similar scaling conditions are also posed therein.}
\end{remark}

\subsection{Empirical results}

In this section, we compare the numerical performance of the proposed approach with two existing ones. Specifically, we consider the following tests.
\begin{itemize}
\item
{\sf HXZ3}: the proposed covariance test with $s=3$;
\item
{\sf HXZ5}: the proposed covariance test with $s=5$;
\item
{\sf HXZ10}: the proposed covariance test with $s=10$;
\item
{\sf CZZW}: the $L_\infty$-type bootstrap test proposed in \cite{chang2015bootstrap};
\item
{\sf LC}: the $L_2$-type covariance test proposed in \cite{li2012two}.
\end{itemize}

These five tests represent the three notable tracks in covariance testing problems. Specifically, our proposed tests are more sensitive to low-rank alternatives, while {\sf CZZW} and {\sf LC} are more sensitive to elementwise changes of the covariance matrix. To implement the proposed tests and that of {\sf CZZW}, we take the bootstrap sample size to be 1,000. 

We consider three settings for the structure of $\bSigma_1$. Let $\Ob$ be a diagonal matrix with its diagonals generated from a uniform distribution ${\rm Unif}(0.5,1.5)$. Set $\bSigma_1=\Ob\bSigma^*\Ob$, where $\bSigma^*=[\bSigma^*_{jk}]\in\reals^{d\times d}$ is specified as follows:
\begin{itemize}
\item {\bf long range:} $\bSigma^*_{jk}=\ind(j=k)+0.5\ind(j\ne k)$;
\item {\bf short range:} $\bSigma^*_{jk}=0.1^{|j-k|}$;
\item {\bf isotropic:} $\bSigma^*=\Ib_d$. 
\end{itemize}

To compare the power, we consider three types of alternatives:
\begin{itemize}
\item {\bf alternative 1:} $\bSigma_2=\bSigma_1+c_1\cdot\bv_S\bv_S^\T$, where $\bv_S$ is sparse with the support size $5$ and non-zero entries all equal to $1/\sqrt{5}$;
\item {\bf alternative 2:} $\bSigma_2=\bSigma_1+c_2\Db^{(1)}$, where $\Db^{(1)}$ satisfies $\Db_{12}^{(1)}=\Db_{21}^{(1)}=1$ and has 0 elsewhere;
\item {\bf alternative 3:} $\bSigma_2=(\Ib_d+c_3\Db^{(2)})^\T\bSigma_1(\Ib_d+c_3\Db^{(2)})$, where $\Db^{(2)}$ satisfies $\Db^{(2)}_{j+1,j}=1$ for $j=1,\ldots,d-1$ and has 0 elsewhere.  
\end{itemize}

We also generate $n=m$ independent samples from $N_d(\zero,\bSigma_1)$ and $N_d(\zero,\bSigma_2)$ separately. Here we set $n=20, 500, 1,000$ and $d =40, 100$. The values of $c_1, c_2, c_3$ are specified as follows.
\begin{itemize}
\item {\bf long range:} Set $(c_1,c_2,c_3)\!=\!(0.9,0.35,0.1)$ for $d \!=\!40$, and set $(c_1,c_2,c_3)\!=\!(1.1,0.4,0.1)$ for $d \!=\!100$;
\item {\bf short range:} Set $(c_1,c_2,c_3)\!=\!(0.7,0.4,0.1)$ for $d \!=\!40$, and set $(c_1,c_2,c_3)\!=\!(0.85,0.4,0.1)$ for $d \!=\!100$;
\item {\bf isotropic:} Set $(c_1,c_2,c_3)=(0.8, 0.3, 0.12)$ for $d =40$, and set $(c_1,c_2,c_3)\!=\!(0.85,0.3,\\0.1)$ for $d \!=\!100$.
\end{itemize}

We repeat the simulation 5,000 times. Tables \ref{tab:1}-\ref{tab:3} illustrate the size and powers (corresponding to the three alternatives) for each combination of $n$ and $d$. There are several noteworthy observations. First, empirical sizes of the proposed tests and {\sf CZZW} are well controlled, while the size of {\sf LC} is inflated in the ``long range" setting. This is as expected since the ``long range" dependence violates the assumptions in \cite{li2012two}. Secondly, regarding the empirical powers, our proposed tests outperform those of {\sf CZZW} and {\sf LC} under alternative 1, which is in line with Theorems \ref{thm:testing} and \ref{thm:lower_bound}; {\sf CZZW} outperforms the others under alternative 2, and {\sf LC} outperforms the others under alternative 3, as expected. Finally, we see that {\sf HXZ5}  performs the best on average in alternative 1, 
while {\sf HXZ3} and {\sf HXZ10} perform similarly to {\sf HXZ5}, and all outperform {\sf CZZW} and {\sf LC}. Hence, for testing against low-rank alternatives, our proposed test is more favorable and the choice of $s$ is rather flexible as long as it remains at a moderate size.







{
\renewcommand{\tabcolsep}{5pt}
\renewcommand{\arraystretch}{1.00}
\begin{table}[t]
\footnotesize
\caption{(Long Range) Comparison of the five competing tests under the null and alternatives 1 to 3 when $n=200, 500, 1,000$ and $d=40, 100$. The results are computed based on 5,000 replications. }
\begin{center}
\begin{tabular}{cccccccccccc}
\toprule
 $d$ & $n$ &  {\sf HXZ3} &{\sf HXZ5} &{\sf HXZ10} & {\sf CZZW} & {\sf LC}  & {\sf HXZ3} &{\sf HXZ5} &{\sf HXZ10} & {\sf CZZW} & {\sf LC}  \\
 \cmidrule(r){3-7}  \cmidrule(r){8-12}
 &&  \multicolumn{5}{c}{empirical size} & \multicolumn{5}{c}{empirical power (alternative 1)} \\
\midrule
40 & 200 & 0.031 & 0.022 & 0.000 & 0.073 & 0.109 & 0.230 & \textbf{0.342} & 0.284 & 0.108 & 0.102 \\ 
     & 500 & 0.046 & 0.036 & 0.013 & 0.055 & 0.095 & 0.802 & 0.911 & \textbf{0.931} & 0.358 & 0.146 \\ 
     & 1000 & 0.045 & 0.042 & 0.027 & 0.040 & 0.085 & 0.990 & 0.997 & \textbf{1.000} & 0.756 & 0.150 \\ 
\midrule
 100 & 200 & 0.018 & 0.007 & 0.000 & 0.088 & 0.111 & 0.120 & \textbf{0.184} & 0.064 & 0.089 & 0.105 \\ 
     & 500 & 0.044 & 0.016 & 0.003 & 0.035 & 0.093 & 0.824 & 0.878 & \textbf{0.894} & 0.340 & 0.106 \\ 
     & 1000 & 0.043 & 0.029 & 0.018 & 0.048 & 0.108 & 0.993 & \textbf{0.996} & 0.995 & 0.770 & 0.122 \\ 
 \midrule
 &&  \multicolumn{5}{c}{empirical power (alternative 2)} & \multicolumn{5}{c}{empirical power (alternative 3)} \\
\midrule
40 & 200 & 0.121 & 0.202 & \textbf{0.347} & 0.220 & 0.106 & 0.068 & 0.036 & 0.007 & 0.269 & \textbf{0.398} \\ 
     & 500 & 0.141 & 0.210 & 0.386 & \textbf{0.788} & 0.096 & 0.211 & 0.123 & 0.078 & 0.503 & \textbf{0.654} \\ 
     & 1000 & 0.120 & 0.203 & 0.431 & \textbf{0.999} & 0.089 & 0.537 & 0.505 & 0.353 & 0.854 & \textbf{0.923} \\ 
\midrule
 100 & 200 & 0.042 & 0.071 & 0.147 & \textbf{0.252} & 0.123 & 0.044 & 0.003 & 0.000 & 0.200 & \textbf{0.369} \\ 
     & 500 & 0.079 & 0.097 & 0.158 & \textbf{0.847} & 0.118 & 0.170 & 0.100 & 0.028 & 0.550 & \textbf{0.671} \\ 
     & 1000 & 0.052 & 0.095 & 0.185 & \textbf{1.000} & 0.101 & 0.525 & 0.418 & 0.255 & 0.840 & \textbf{0.912 }\\  
\bottomrule
 \end{tabular}
 \end{center}\label{tab:1}
 \end{table}
}

{
\renewcommand{\tabcolsep}{5pt}
\renewcommand{\arraystretch}{1.00}
\begin{table}[tp!]
\footnotesize
\caption{(Short Range) Comparison of the five competing tests under the null and alternatives 1 to 3 when $n = 200, 500, 1,000$ and $d=40,100$. The results are computed based on 5,000 replications.}
\begin{center}
\begin{tabular}{cccccccccccc}
\toprule
 $d$ & $n$ &  {\sf HXZ3} &{\sf HXZ5} &{\sf HXZ10} & {\sf CZZW} & {\sf LC}  & {\sf HXZ3} &{\sf HXZ5} &{\sf HXZ10} & {\sf CZZW} & {\sf LC}  \\
 \cmidrule(r){3-7}  \cmidrule(r){8-12}
 &&  \multicolumn{5}{c}{empirical size} & \multicolumn{5}{c}{empirical power (alternative 1)} \\
\midrule
40 & 200 & 0.023 & 0.014 & 0.001 & 0.039 & 0.030 & 0.075 & 0.053 & 0.002 & 0.083 & \textbf{0.160} \\ 
     & 500 & 0.036 & 0.028 & 0.024 & 0.066 & 0.024 & 0.566 & \textbf{0.570} & 0.465 & 0.390 & 0.213 \\ 
     & 1000 & 0.048 & 0.028 & 0.032 & 0.050 & 0.020 & 0.936 & 0.932 & \textbf{0.943} & 0.848 & 0.426 \\ 
\midrule
 100 & 200 & 0.007 & 0.001 & 0.000 & 0.039 & 0.127 & 0.021 & 0.035 & 0.001 & 0.033 & \textbf{0.209} \\ 
     & 500 & 0.030 & 0.020 & 0.005 & 0.044 & 0.034 & \textbf{0.362} & 0.333 & 0.186 & 0.223 & 0.206 \\ 
     & 1000 & 0.036 & 0.030 & 0.010 & 0.047 & 0.026 & 0.894 & \textbf{0.901} & 0.898 & 0.756 & 0.196 \\ 
 \midrule
 &&  \multicolumn{5}{c}{empirical power (alternative 2)} & \multicolumn{5}{c}{empirical power (alternative 3)} \\
\midrule
40 & 200 & 0.041 & 0.020 & 0.004 & \textbf{0.179} & 0.070 & 0.027 & 0.009 & 0.002 & 0.154 & \textbf{0.364} \\ 
     & 500 & 0.104 & 0.132 & 0.081 & \textbf{0.701} & 0.021 & 0.183 & 0.192 & 0.106 & 0.470 & \textbf{0.730} \\ 
     & 1000 & 0.123 & 0.181 & 0.294 & \textbf{0.994} & 0.046 & 0.486 & 0.462 & 0.513 & 0.975 & \textbf{0.993} \\ 
\midrule
 100 & 200 & 0.028 & 0.020 & 0.002 & 0.088 & \textbf{0.159} & 0.037 & 0.025 & 0.001 & 0.075 & \textbf{0.467} \\ 
     & 500 & 0.052 & 0.037 & 0.013 & \textbf{0.580} & 0.161 & 0.097 & 0.087 & 0.025 & 0.258 & \textbf{0.756} \\ 
     & 1000 & 0.080 & 0.098 & 0.107 & \textbf{0.981} & 0.169 & 0.199 & 0.215 & 0.168 & 0.759 & \textbf{0.990} \\  
\bottomrule
 \end{tabular}
 \end{center}\label{tab:2}
 \end{table}
}

{
\renewcommand{\tabcolsep}{5pt}
\renewcommand{\arraystretch}{1.00}
\begin{table}[t]
\footnotesize
\caption{(Isotropic) Comparison of the five competing tests under the null and alternatives 1 to 3 when $n=200, 500, 1,000$ and $d=40, 100$. The results are computed based on 5,000 replications. }
\begin{center}
\begin{tabular}{cccccccccccc}
\toprule
 $d $ & $n$ &  {\sf HXZ3} &{\sf HXZ5} &{\sf HXZ10} & {\sf CZZW} & {\sf LC}  & {\sf HXZ3} &{\sf HXZ5} &{\sf HXZ10} & {\sf CZZW} & {\sf LC}  \\
 \cmidrule(r){3-7}  \cmidrule(r){8-12}
 &&  \multicolumn{5}{c}{empirical size} & \multicolumn{5}{c}{empirical power (alternative 1)} \\
\midrule
40 & 200 & 0.029 & 0.013 & 0.000 & 0.061 & 0.042 & 0.053 & 0.039 & 0.009 & 0.076 & \textbf{0.159} \\ 
     & 500 & 0.044 & 0.015 & 0.015 & 0.052 & 0.053 & 0.435 & \textbf{0.500} & 0.443 & 0.253 & 0.410 \\ 
     & 1000 & 0.042 & 0.044 & 0.029 & 0.050 & 0.061 & 0.872 & 0.920 & \textbf{0.939} & 0.763 & 0.854 \\ 
\midrule
 100 & 200 & 0.015 & 0.001 & 0.000 & 0.036 & 0.058 & 0.035 & 0.005 & 0.000 & 0.062 & \textbf{0.099} \\ 
     & 500 & 0.039 & 0.016 & 0.006 & 0.060 & 0.040 & 0.430 & \textbf{0.455} & 0.322 & 0.250 & 0.238 \\ 
     & 1000 & 0.039 & 0.032 & 0.017 & 0.033 & 0.065 & 0.900 & 0.904 & \textbf{0.908} & 0.806 & 0.554 \\ 
 \midrule
 &&  \multicolumn{5}{c}{empirical power (alternative 2)} & \multicolumn{5}{c}{empirical power (alternative 3)} \\
\midrule
 40 & 200 & 0.038 & 0.017 & 0.006 & \textbf{0.461} & 0.097 & 0.025 & 0.007 & 0.001 & 0.091 & \textbf{0.269} \\ 
     & 500 & 0.120 & 0.169 & 0.198 & \textbf{0.990} & 0.288 & 0.075 & 0.066 & 0.070 & 0.289 & \textbf{0.729} \\ 
     & 1000 & 0.141 & 0.228 & 0.421 & \textbf{1.000} & 0.604 & 0.213 & 0.223 & 0.223 & 0.787 & \textbf{0.994} \\ 
\midrule
 100 & 200 & 0.030 & 0.010 & 0.000 & \textbf{0.312} & 0.073 & 0.009 & 0.005 & 0.000 & 0.079 & \textbf{0.253} \\ 
     & 500 & 0.065 & 0.063 & 0.026 & \textbf{0.982} & 0.106 & 0.043 & 0.044 & 0.017 & 0.223 & \textbf{0.769} \\ 
     & 1000 & 0.079 & 0.135 & 0.189 & \textbf{1.000} & 0.207 & 0.160 & 0.146 & 0.079 & 0.820 & \textbf{0.999} \\  
\bottomrule
 \end{tabular}
 \end{center}\label{tab:3}
 \end{table}
}

\section{Discussion}\label{sec:discussion}

Spectral analysis for large random matrices has a long history and maintains one of the most active research areas in statistics. Recent advances include the {discovery of the Tracy-Widom law, an important family of distributions that quantifies the fluctuation of sample eigenvalues.} A vast literature follows. However, more questions are raised than answered. In particular, no result has been promised for extensions to non-Gaussian distributions with a nontrivial covariance structure. This paper fills this long-standing gap from a new perspective grown in the literature of extreme value theory. The obtained results prove to work in many cases which for a long time are known to be challenging to deal with.

Very recently, \cite{fan2015asymptotics} studied asymptotic behaviors of sample covariance eigenvalues under a pervasive assumption, that is, the largest eigenvalue grows quickly with the dimension. Under this assumption, they proved the asymptotic normality for the sample eigenvalues. In comparison, our results are built on the normalized covariance matrix and are obtained in the settings where the signals are not too strong. A natural question arises that whether a phase transition phenomenon occurs when signals change from weak to strong. In particular, how do the asymptotic distributions of sample eigenvalues change with the growing magnitudes of extreme eigenvalues? We conjecture that this problem may be related to the normal mean problem in extreme value theory, and leave that question for future research.

\section{Proofs}\label{sec:proof}

This section contains the proofs of the results in this paper.

\subsection{Proof of the main results}\label{sec:mainproof}

\subsubsection{Proof of Theorem \ref{thm:limiting_dist}}
We first give an outline of the proof, which consists of three main steps. (i) In the first step, we approximate $\hat Q_{\max}$, the supremum over a continuous function space induced by $\mathbb{V}(s,d)$, by the maximum over a discrete function space induced by $\mathbb{N}_{\epsilon_1}$, for $\epsilon_1$ as in Theorem \ref{thm:limiting_dist}. (ii) In the second step, we show that the above discretized version of $\hat{Q}_{\max}$ over $\mathbb{N}_{\epsilon_1}$ converges weakly to the maximum of a Gaussian sequence. (iii) Lastly, we employ the anti-concentration inequality (Lemma \ref{lem:anticoncentration}) to complete the proof.

{\bf Step I.} Let $\epsilon\in (0,1)$ be an arbitrary number. We first employ the following lemma to connect the supremum over a continuous function space induced by $\mathbb{V}(s,d)$ to the maximum over a discrete function space induced by $\mathbb{N}_{\epsilon}$.
\begin{lemma}\label{lem:discret}
There exists an $\epsilon$-net $\mathbb{N}^0_{\epsilon}$ of $\mathbb{V}(s,d)$ equipped with the Euclidean metric satisfying that $\log p^0_{\epsilon} = \log |\mathbb{N}^0_{\epsilon}| \lesssim s\log\frac{ed}{\epsilon s}$. Further, for any $\epsilon$-net $\mathbb{N}_{\epsilon}$ of $\mathbb{V}(s,d)$, we have
\begin{align*}
\hat{Q}_{\max} \leq2\gamma_s \epsilon\cdot \hat{Q}_{\max} +M_{\max,\epsilon},
\end{align*}
where
\begin{align}\label{eq:def_Mn}
M_{\max,\epsilon} :=\max_{\bv\in\mathbb{N}_{\epsilon}}|Q_{ \bv}|=\max_{\bv\in\mathbb{N}_{\epsilon}}\bigg|\frac{\sqrt{n}~\bv^{\T}(\hat\bSigma-\bSigma)\bv}{\bv^{\T}\bSigma\bv}\bigg|.
\end{align}
\end{lemma}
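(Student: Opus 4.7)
The lemma has two independent claims: a covering-number bound for $\mathbb{V}(s,d)$, and an approximation inequality relating $\hat{Q}_{\max}$ to its discretization. My plan is to establish them separately.

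For the cardinality bound, I would use the standard two-stage covering construction for sparse unit vectors. First, enumerate all subsets $I \subseteq [d]$ with $|I|=s$; there are $\binom{d}{s}$ of them, and $\log \binom{d}{s} \leq s\log(ed/s)$. For each fixed $I$, the set $\{\bv \in \mathbb{R}^d : \|\bv\|_2 = 1,\ \supp(\bv) \subseteq I\}$ is isometric to the unit sphere $\mathbb{S}^{s-1}$, which admits a (volumetric) $\epsilon$-net of size at most $(1+2/\epsilon)^s \leq (3/\epsilon)^s$. Taking $\mathbb{N}^0_\epsilon$ to be the union of these local nets yields an $\epsilon$-net of $\mathbb{V}(s,d)$ in the Euclidean metric, with $\log p_\epsilon^0 \leq s\log(ed/s) + s\log(3/\epsilon) \lesssim s\log(ed/(\epsilon s))$, as claimed.

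For the approximation inequality, I would invoke Lemma~\ref{lem:obs} with $\Mb = \sqrt{n}(\hat\bSigma - \bSigma)$. For this $\Mb$ one has $|\bv_{\bSigma}^\T \Mb \bv_{\bSigma}| = |Q_{\bv}|$ for $\bv \in \mathbb{V}(s,d)$, and $\sup_{\bv \in \mathbb{V}(s,d)} |\bv_\bSigma^\T \Mb \bv_\bSigma| = \hat{Q}_{\max}$. Given any $\bv \in \mathbb{V}(s,d)$, select $\tilde\bv \in \mathbb{N}_\epsilon$ with $\|\bv - \tilde\bv\|_2 \leq \epsilon$ and with the same support as $\bv$ (see below). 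Lemma~\ref{lem:obs} then gives
\begin{equation*}
\bigl||Q_\bv| - |Q_{\tilde\bv}|\bigr| \leq 2\gamma_s \|\bv - \tilde\bv\|_2 \,\hat{Q}_{\max} \leq 2\gamma_s\epsilon\,\hat{Q}_{\max},
\end{equation*}
so $|Q_\bv| \leq M_{\max,\epsilon} + 2\gamma_s\epsilon\,\hat{Q}_{\max}$. Taking the supremum over $\bv \in \mathbb{V}(s,d)$ yields $\hat{Q}_{\max} \leq M_{\max,\epsilon} + 2\gamma_s\epsilon\,\hat{Q}_{\max}$, as desired.

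The main subtlety is the support-matching step: Lemma~\ref{lem:obs} requires $\bv$ and $\tilde\bv$ to share a support, whereas an arbitrary $\epsilon$-net of $\mathbb{V}(s,d)$ need not provide such a $\tilde\bv$. I would resolve this by working with the specific net built above (each net point is supported on one of the enumerated $s$-subsets $I$), and by strengthening the approximation property: for $\bv$ with support $J \subseteq [d]$, $|J| \leq s$, extend $J$ to some $I \supseteq J$ of size $s$ and choose the approximating $\tilde\bv$ from the local net on $I$, so that $\supp(\bv) \cup \supp(\tilde\bv) \subseteq I$. Inspecting the proof of Lemma~\ref{lem:obs} then shows that only containment in a common $s$-sized coordinate subspace, not literal equality of supports, is used, so the argument applies. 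This support-compatible approximation is the only delicate point; once it is in place, combining Lemma~\ref{lem:obs} with the standard covering estimate completes the proof, and the resulting bound on $\hat{Q}_{\max}$ is exactly what is needed in Step~I of Theorem~\ref{thm:limiting_dist}.
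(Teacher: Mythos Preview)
Your proposal is correct and follows essentially the same approach as the paper: construct the net as a union of local spherical nets over the $\binom{d}{s}$ coordinate subsets, then apply Lemma~\ref{lem:obs} with $\Mb=\sqrt{n}(\hat\bSigma-\bSigma)$ to pass from a support-matched approximant $\tilde\bv$ back to $\bv$. Your treatment of the support-matching issue is in fact more careful than the paper's own proof, which tacitly relies on the structured net $\mathbb{N}_\epsilon=\bigcup_{\mathbb{I}}\mathbb{N}_{\mathbb{I},\epsilon}$ while the lemma is stated ``for any $\epsilon$-net''; your observation that only common containment in an $s$-sized coordinate subspace (rather than literal equality of supports) is needed in Lemma~\ref{lem:obs} is exactly the right reading.
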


\medskip
Lemma \ref{lem:discret} and the fact $\hat{Q}_{\max}\geq M_{\max,\epsilon}$ yield that, for any $\epsilon\in(0,1)$,
\begin{align}\label{eq:step11}
|\hat{Q}_{\max}-M_{\max,\epsilon} |\leq2 \gamma_s \epsilon\cdot \hat{Q}_{\max}.
\end{align}
Next we use Lemma \ref{lem:connecting} below to bound $\hat{Q}_{\max}$. Note that, by Lemma \ref{lem:orlicz norm}, we have for any $\bv\in\mathbb{S}^{d-1}$ that $
\|\bv^{\T}\bU\bU^{\T}\bv-1\|_{\psi_1}\leq\|(\bv^{\T}\bU)^2\|_{\psi_1}+1=\|\bv^{\T}\bU\|_{\psi_2}^2+1$. Taking maximum over $\bv\in\mathbb{S}^{d-1}$ on both sides yields
\begin{align*}
 \sup_{\bv\in\mathbb{S}^{d-1}}\|\bv^{\T}\bU\bU^{\T}\bv-1\|_{\psi_1} \leq K_1^2 +1 = K_2
\end{align*}
\begin{lemma}\label{lem:connecting}
For any $t>0$, we have
\begin{align*}
\P\bigg[ \hat{Q}_{\max} \leq C_{12} K_2 \,   \gamma^{1/2}_n(s,d) + K_2 \max\bigg\{2  \sqrt{2t}, C_{11}   \gamma_n(s,d) \frac{t}{\sqrt{n}}\bigg\}\bigg]
\geq1-4e^{-t},
\end{align*}
where $C_{11}, C_{12}>0$ are absolute constants, $K_2=K_1^2+1$ and $\gamma_n(s,d)=s\log(\gamma_s \cdot ed/s)\vee s\log n$ are as in Theorem \ref{thm:limiting_dist}. 
\end{lemma}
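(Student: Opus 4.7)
The plan is a two-step reduction followed by a concentration argument. Step one uses Lemma \ref{lem:discret} to replace the supremum over the continuum $\mathbb{V}(s,d)$ by a maximum over a finite $\epsilon$-net; step two applies Bernstein's inequality for sub-exponential random variables to each element of that net, followed by a union bound.

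First, I would invoke Lemma \ref{lem:discret} with $\epsilon = 1/(n\gamma_s)$ (matching the choice of $\epsilon_1$ in Theorem \ref{thm:limiting_dist}), yielding the self-bounding inequality $\hat Q_{\max} \le 2\gamma_s\epsilon\, \hat Q_{\max} + M_{\max,\epsilon}$. Since $2\gamma_s\epsilon = 2/n \le 1/2$ for $n \ge 4$, rearrangement gives $\hat Q_{\max} \le 2\,M_{\max,\epsilon}$, and Lemma \ref{lem:discret} bounds the cardinality by $\log |\mathbb{N}_\epsilon| \le C s\log(n\gamma_s \cdot ed/s) \le C \gamma_n(s,d)$. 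Hence it suffices to establish the claimed tail bound for $M_{\max,\epsilon}$.

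Next, using the representation $\bX_i = \bSigma^{1/2}\bU_i$ from Assumption \ref{cdt1}, I would rewrite, for each $\bv \in \mathbb{N}_\epsilon$,
\[
Q_{\bv} \;=\; \frac{1}{\sqrt{n}}\sum_{i=1}^n \Bigl\{ (\bu_{\bv}^{\T}\bU_i)^2 - 1 \Bigr\}, \qquad \bu_{\bv} := \frac{\bSigma^{1/2}\bv}{\|\bSigma^{1/2}\bv\|_2} \in \mathbb{S}^{d-1}.
\]
The summands are i.i.d.\ and centered (since $\E[\bU\bU^{\T}] = \Ib_d$), with sub-exponential norm at most $K_1^2 + 1 = K_2$ by the Orlicz-norm computation immediately preceding the lemma statement. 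Bernstein's inequality for sub-exponential variables then gives, for each $\bv$ and every $u > 0$,
\[
\P\bigl( |Q_{\bv}| \ge u \bigr) \;\le\; 2\exp\!\left\{ -c\,\min\!\left( \frac{u^2}{K_2^2}, \; \frac{u\sqrt{n}}{K_2} \right) \right\}.
\]
A union bound over $\mathbb{N}_\epsilon$, combined with $\log|\mathbb{N}_\epsilon| \le C\gamma_n(s,d)$, then yields control on $M_{\max,\epsilon}$ and hence on $\hat Q_{\max}$. Splitting the $\min$ into its two regimes and using $\sqrt{a+b} \le \sqrt{a}+\sqrt{b}$, the contribution from $\log|\mathbb{N}_\epsilon|$ collapses into the leading $C_{12}K_2\gamma_n^{1/2}(s,d)$ term, while the $t$-dependent residual takes the max-of-two-terms form displayed in the statement.

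The only delicate point I anticipate is the self-bounding structure of Lemma \ref{lem:discret}: one must choose $\epsilon$ small enough that $2\gamma_s\epsilon$ stays bounded away from $1$, while keeping $\log|\mathbb{N}_\epsilon|$ of order $\gamma_n(s,d)$. The choice $\epsilon = 1/(n\gamma_s)$ resolves both concerns simultaneously and cleanly generates the $s\log n$ contribution inside $\gamma_n(s,d)$ via $\log|\mathbb{N}_\epsilon| \lesssim s\log(n\gamma_s \cdot ed/s)$. The remainder is a routine Bernstein-plus-union-bound calculation and poses no substantive obstacle.
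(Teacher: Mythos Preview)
Your proposal is correct, and it takes a genuinely different route from the paper's own proof. The paper does not discretize first and then apply Bernstein plus a union bound. Instead, it works directly with the supremum $\hat Q_{\max}$: it invokes Adamczak's Talagrand-type tail inequality for suprema of unbounded empirical processes (Lemma~\ref{lem:tail_ineq}) to get concentration of $\hat Q_{\max}$ around $\E\hat Q_{\max}$, and then bounds $\E\hat Q_{\max}$ via Mendelson's generic-chaining result (Lemma~\ref{lem:gc1}) together with Talagrand's majorizing measure theorem (Lemma~\ref{lem:gc3}). The $\epsilon$-net enters only indirectly, to control $\|\max_i\sup_{\bv}|g_{\bv}(\bX_i)|\|_{\psi_1}$ and the Gaussian width; this is where the multiplicative factor $\gamma_n(s,d)$ in front of $t/\sqrt{n}$ comes from.

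Your argument is more elementary and in fact yields a slightly sharper deviation term: the union bound produces $\gamma_n(s,d)$ \emph{additively} (absorbed into the leading $K_2\gamma_n^{1/2}(s,d)$ term once $\gamma_n(s,d)\leq n$), so the sub-exponential piece is $K_2\,t/\sqrt{n}$ rather than $K_2\,\gamma_n(s,d)\,t/\sqrt{n}$. That improvement is immaterial downstream, since the lemma is only applied with $t=\log n$ in the regime $\gamma_n(s,d)=o(n^{1/9})$. The paper's heavier machinery buys a template (``concentration around the mean, then bound the mean'') that would transfer to index sets without convenient finite nets, but for $\mathbb{V}(s,d)$ your direct approach is entirely adequate. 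One cosmetic remark: taking $\epsilon=1/(4\gamma_s)$ already makes $2\gamma_s\epsilon=1/2$ and gives $\log|\mathbb N_\epsilon|\lesssim s\log(\gamma_s\cdot ed/s)\leq\gamma_n(s,d)$; your smaller choice $\epsilon=1/(n\gamma_s)$ works but is not needed here.
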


\medskip
Using Lemma \ref{lem:connecting}, it follows from \eqref{eq:step11} that for any $t>0$ and $\epsilon\in(0,1)$,
\begin{align*}
\P\bigg(|\hat{Q}_{\max}\!-\!M_{\max,\epsilon}|  \leq C K_2\, \gamma_s \epsilon  \bigg[  \gamma^{1/2}_n(s,d)  \!+\!   \max\bigg\{   \sqrt{t}  ,   \gamma_n(s,d)\frac{t}{\sqrt{n}} \bigg\} \bigg] \bigg)\geq1\!-\!4e^{-t}.
\end{align*}
Taking $\epsilon=\epsilon_1 =(n\gamma_s )^{-1}$ and $t=\log n$, we deduce that
\begin{align}\label{eq:step1}
\P\bigg( \bigg|\hat{Q}_{\max}\!-\!\max_{\bv\in\mathbb{N}_{\epsilon_1}}|Q_{\bv}| \bigg| \leq  C K_2 \bigg[  \frac{ \gamma^{1/2}_n(s,d) }{n}\!+\!   \max \bigg\{ \frac{\sqrt{ \log n}}{n},   \gamma_n(s,d)\frac{\log n}{n^{3/2}}\bigg\} \bigg] \bigg)\notag \\
 \geq1-\frac{4}{n}.
\end{align}

\medskip
{\bf Step II.} For any $\epsilon\in(0,1)$, write $\mathbb{N}_{\epsilon} =\{\bv_j:j=1,\ldots,p_{\epsilon}\}$ for the $\epsilon$-net $\mathbb{N}_{\epsilon}$ constructed in {\bf Step I}, and recall that for $i=1,\ldots,n$ and $j=1,\ldots,p_{\epsilon}$,
\begin{align*}
R_{ij} =\frac{\bv_j^{\T}(\bX_i\bX_i^{\T}-\bSigma)\bv_j}{\bv_j^{\T}\bSigma\bv_j}.
\end{align*}
It follows from \eqref{eq:def_Mn} that
\begin{align*}
M_{\max,\epsilon}=\max_{\bv\in\mathbb{N}_{\epsilon}}|Q_{ \bv}|=\max_{1\leq j\leq p_{\epsilon}}\bigg|\frac{1}{\sqrt{n}}\sum_{i=1}^n R_{ij}\bigg|.
\end{align*}
The following lemma gives a Gaussian coupling inequality for $M_{\max,\epsilon}$.

\begin{lemma}\label{lem:coupling}
For every $\delta>0$, we have
\begin{align*}
&\P\bigg(\bigg| M_{\max,\epsilon} - \max_{1\leq j\leq p_{\epsilon}}|G_j|\bigg|\geq16\delta\bigg)\notag\\
\lesssim & \, K_2^2 \frac{ \sqrt{ \log (2p_{\epsilon} )  }\log(2p_{\epsilon}\vee n)}{\delta^2 n^{1/2} }+ K_0^2K_2^2  \frac{\log (2p_{\epsilon} ) \{ \log(np_{\epsilon}+1)\}^2 \log(2p_{\epsilon}\vee n)}{\delta^2n}   \notag \\
& \quad +K_2^3 \frac{ \{ \log(2p_{\epsilon}\vee n)\}^2}{\delta^3n^{1/2} }  +  K_0^3   K_2^3 \frac{(\log p_{\epsilon})\{ \log(np_{\epsilon}+1) \}^3 \{ \log(2p_{\epsilon}\vee n)\}^2}{\delta^3 n^{3/2}} \notag \\
& \quad + K_0^4K_2^4 \frac{\{ \log(p_{\epsilon}+1)\}^4 \{ \log(2p_{\epsilon}\vee n)\}^3}{\delta^4n}+\frac{\log n}{n},
\end{align*}
where $M_{\max,\epsilon}$ is as in \eqref{eq:def_Mn}, $K_0$ is the constant in Lemma \ref{lem:maximal_ineq} with $\alpha=1$ and $K_2=K_1^2+1$.
\end{lemma}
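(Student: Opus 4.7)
\textbf{Proof proposal for Lemma~\ref{lem:coupling}.} The plan is to apply the Gaussian comparison inequality of Lemma~\ref{lem:coupling_ineq} directly to the rescaled variables $X_{ij} := R_{ij}/\sqrt{n}$, after first turning the two-sided maximum $\max_j |\sum_i R_{ij}|$ into a one-sided maximum by doubling the index set (i.e.\ indexing by $j \in [p_\epsilon]$ and $j \in \{p_\epsilon+1,\dots,2p_\epsilon\}$ corresponding to $+R_{ij}$ and $-R_{ij}$, respectively). After this reduction, the lemma's three terms $D_1$, $D_2$, $D_3$ produce exactly the five terms in the stated bound once one plugs in appropriate sub-exponential concentration and moment estimates for the $R_{ij}$'s.

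The first substantive step is to identify the correct Orlicz scale for the summands. Writing $\bw_j := \bSigma^{1/2}\bv_j/\|\bSigma^{1/2}\bv_j\|_2 \in \mathbb{S}^{d-1}$, one has
\[
R_{ij} \;=\; (\bw_j^{\T} \bU_i)^2 - 1,
\]
so that by Assumption~\ref{cdt1} and Lemma~\ref{lem:orlicz norm} every $R_{ij}$ satisfies $\|R_{ij}\|_{\psi_1} \leq K_2 = K_1^2 + 1$, with $\E R_{ij}=0$. Consequently $R_{ij}R_{il}$ is $\psi_{1/2}$-bounded by $\lesssim K_2^2$, and $|R_{ij}|^3$ is $\psi_{1/3}$-bounded by $\lesssim K_2^3$. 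These moment-generating estimates, applied entry by entry, will drive every bound below.

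Next I will bound $D_1, D_2, D_3$ as they appear in Lemma~\ref{lem:coupling_ineq} after the rescaling $X_{ij} = R_{ij}/\sqrt{n}$. For $D_1 = n^{-1} \E\max_{j,l}|\sum_i (R_{ij}R_{il} - \E R_{ij}R_{il})|$, I would apply a Bernstein-type concentration for sub-exponential sums to each of the $(2p_\epsilon)^2$ pairs and take a union bound, invoking Lemma~\ref{lem:maximal_ineq} with $\alpha = 1$ to control the maximum; this yields the sub-Gaussian term of order $K_2^2\sqrt{\log p_\epsilon}\,n^{-1/2}$ and the sub-exponential term of order $K_0^2 K_2^2 \log p_\epsilon \{\log(np_\epsilon+1)\}^2 / n$. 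For $D_2 = n^{-3/2} \E\max_j \sum_i |R_{ij}|^3$, I would split each sum into its expectation part (bounded by $nK_2^3$) and its fluctuation part (bounded via a $\psi_{1/3}$ maximal inequality, producing an extra log factor). For $D_3 = n^{-3/2}\sum_i \E[\max_j |R_{ij}|^3 \ind\{\max_j |R_{ij}| > \delta\sqrt{n}/\log(2p_\epsilon n)\}]$, I would use Cauchy--Schwarz together with the sub-exponential tail bound $\P(\max_j |R_{ij}| > t) \leq 2p_\epsilon \exp(-t/K_2)$ and Lemma~\ref{lem:maximal_ineq} to get the cubed-log factor $\{\log(np_\epsilon+1)\}^3$ that appears in the fourth term of the stated bound.

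The final step is arithmetic: substitute into
\[
\P(|Z - \tilde Z| \geq 16\delta) \lesssim \delta^{-2}\log(2p_\epsilon n)\bigl\{ D_1 + \delta^{-1}\log(2p_\epsilon n)(D_2 + D_3)\bigr\} + n^{-1}\log n,
\]
expand, and combine like terms to recover the five summands. The main technical obstacle will be the bound on $D_1$: to obtain the correct split between the $\sqrt{\log p_\epsilon}/\sqrt{n}$ term (Gaussian regime) and the $\log p_\epsilon \cdot \log^2(np_\epsilon)/n$ term (sub-exponential regime), one must carefully track the two regimes of Bernstein's inequality applied to sums of $\psi_{1/2}$-products, rather than absorbing everything into a single bound; this is what produces the two-term structure with $K_2^2$ coefficients. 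The $D_3$ truncation argument is the other delicate piece, since one needs the truncation threshold $\delta\sqrt{n}/\log(2p_\epsilon n)$ to be large enough for the tail probability to offset the $|R_{ij}|^3$ growth, which forces the $\{\log(np_\epsilon+1)\}^3$ factor.
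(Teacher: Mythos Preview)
Your overall structure --- doubling the index set to turn the absolute maximum into a one-sided maximum, applying Lemma~\ref{lem:coupling_ineq} to $X_{ij}=R_{ij}/\sqrt{n}$, and then bounding $D_1,D_2,D_3$ separately using the $\psi_1$ bound $\|R_{ij}\|_{\psi_1}\leq K_2$ --- is exactly the paper's route. The differences are in how $D_1$, $D_2$, and $D_3$ are actually controlled, and in your attribution of the five displayed terms.

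For $D_1$ and $D_2$ the paper does not argue via Bernstein-for-products or a $\psi_{1/3}$ maximal inequality. It simply invokes two ready-made empirical-process lemmas from Chernozhukov--Chetverikov--Kato (Lemmas~\ref{lem:cher1} and~\ref{lem:cher2} here), which give directly
\[
D_1 \lesssim \sqrt{\log(2p_\epsilon)}\,\underbrace{\max_j\Big(\sum_i\E R_{ij}^4\Big)^{1/2}}_{D_{11}\lesssim K_2^2\sqrt{n}}
+\log(2p_\epsilon)\,\underbrace{\big(\E\max_{i,j}R_{ij}^4\big)^{1/2}}_{D_{12}\lesssim K_0^2K_2^2\{\log(np_\epsilon+1)\}^2},
\]
and an analogous two-term bound for $D_2$ whose second piece $D_{22}\lesssim K_0^3K_2^3\{\log(np_\epsilon+1)\}^3\log p_\epsilon$ is what produces the \emph{fourth} term in the statement. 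So the $\{\log(np_\epsilon+1)\}^3$ factor you attribute to $D_3$ in fact comes from $D_2$. Your Bernstein proposal for $D_1$ is also awkward because $R_{ij}R_{il}$ is only $\psi_{1/2}$, so Lemma~\ref{lem:maximal_ineq} with $\alpha=1$ does not apply to the products directly; the CCK lemma sidesteps this by reducing everything to fourth moments of the $R_{ij}$'s, which are $\psi_1$.

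The main gap is your treatment of $D_3$. Cauchy--Schwarz plus the union-bound tail $\P(\max_j|R_{ij}|>t)\leq 2p_\epsilon e^{-t/K_2}$ leaves a factor $\sqrt{p_\epsilon}$ in the bound, and since $p_\epsilon$ can be as large as $(ed/\epsilon s)^s$ this is not killed by the exponential uniformly in $\delta>0$; you would only recover a useful inequality for a restricted range of $\delta$, not the stated bound valid for every $\delta$. The paper instead uses the elementary comparison
\[
\E\!\Big[\max_j|R_{ij}|^3\,\ind\Big\{\max_j|R_{ij}|>\tfrac{\delta\sqrt n}{\log(2p_\epsilon\vee n)}\Big\}\Big]
\;\leq\;\frac{\log(2p_\epsilon\vee n)}{\delta\sqrt n}\,\E\!\Big(\max_j R_{ij}^4\Big),
\]
summing over $i$ to get $D_3\leq \frac{\log(2p_\epsilon\vee n)}{\delta\sqrt n}\,D_4$ with $D_4:=\sum_i\E\max_jR_{ij}^4$. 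Then $D_4\lesssim K_0^4K_2^4\{\log(p_\epsilon+1)\}^4 n$ via Lemmas~\ref{lem:orlicz norm} and~\ref{lem:maximal_ineq}, and plugging this into the $\delta^{-3}\{\log(2p_\epsilon\vee n)\}^2 n^{-3/2}$ prefactor yields precisely the \emph{fifth} term with its $\delta^{-4}$ and $\{\log(p_\epsilon+1)\}^4$. This one-line fourth-moment trick is the piece your proposal is missing.
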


In view of Lemma \ref{lem:coupling}, by taking $\epsilon=\epsilon_1$ we have
\begin{align*}
&  P\bigg(\bigg|\max_{\bv\in\mathbb{N}_{\epsilon_1}}|Q_{ \bv}|\!-\!\max_{1\leq j\leq p_{\epsilon_1}}|G_j|\bigg|\geq16\delta\bigg)  \\ 
  \lesssim& K_2^3 \frac{ \{ \log(2p_{\epsilon_1}\vee n) \}^2}{\delta^3n^{1/2}}
 + K_0^4K_2^4 \frac{ \{ \log(p_{\epsilon_1}\!+1) \}^4 \{ \log(2p_{\epsilon_1}\!\vee n)\}^3}{\delta^4n},
\end{align*}
where $p_{\epsilon_1} =|\mathbb{N}_{\epsilon_1}|$. Putting $\{G_{\bv} \}_{\bv \in \mathbb{N}_{\epsilon_1}}= \{ G_j \}_{j=1}^{p_{\epsilon_1}}$, we have
\begin{align}\label{eq:step2}
P\bigg(\bigg|\max_{\bv\in\mathbb{N}_{\epsilon_1}}|Q_{ \bv}|-\max_{\bv\in\mathbb{N}_{\epsilon_1}}|G_{\bv}|\bigg|\geq16\delta\bigg)\lesssim K_2^3\frac{ (\log p_{\epsilon_1})^2 }{\delta^3n^{1/2}}+ K_0^4K_2^4  \frac{ (\log p_{\epsilon_1} )^7 }{\delta^4  n} . 
\end{align}
Without loss of generality, assume that $\log p_{\epsilon_1} \geq \gamma_n(s,d)$ (the case when $\log p_{\epsilon_1}<\gamma_n(s,d)$ can be similarly dealt with by replacing all $\log p_{\epsilon_1}$ below by $\gamma_n(s,d)$). Combining \eqref{eq:step1} and \eqref{eq:step2}, we have
\begin{align}\label{eq:step2fin}
\P\bigg\{ \bigg|\hat{Q}_{\max}-\max_{\bv\in\mathbb{N}_{\epsilon_1}}|G_{\bv}|\bigg|\geq16\delta+CK_2\bigg( \! \frac{ \sqrt{\log p_{\epsilon_1}} }{n}+\log p_{\epsilon_1} \frac{\log n}{n^{3/2}}\bigg) \bigg\}\notag \\
 \lesssim  K_2^3  \frac{(\log p_{\epsilon_1})^2 }{\delta^3n^{1/2}}+K_0^4K_2^4\frac{ (\log p_{\epsilon_1})^7 }{\delta^4n},
\end{align}
where $C>0$ is an absolute constant.

\medskip
{\bf Step III.} Taking $\delta=\frac{K_0^4 (\log p_{\epsilon_1})^{5/8} }{K_2 \, n^{1/8}}$, we deduce from \eqref{eq:step2fin} that there exists an absolute positive constant $C$ such that 
\begin{align}\label{eq:step31}
\P\bigg\{ \bigg|\hat{Q}_{\max}-\max_{\bv\in\mathbb{N}_{\epsilon_1}}|G_{\bv}|\bigg|\geq C K_0^4 \frac{ (\log p_{\epsilon_1})^{5/8} }{K_2 \, n^{1/8}}\bigg\} \lesssim K_2^2 \frac{  (\log p_{\epsilon_1})^{1/8} }{n^{1/8}}+K_2^2 \frac{ (\log p_{\epsilon_1})^{9/2} }{n^{1/2}}.
\end{align}
By Lemma \ref{lem:anticoncentration}, we have
\begin{align}\label{eq:step32}
\sup_{x\in\reals}\P\bigg\{ \bigg|\max_{\bv\in\mathbb{N}_{\epsilon_1}}|G_{\bv}|-x\bigg|\leq C_{13 } K_0^4 \frac{(\log p_{\epsilon_1})^{5/8} }{K_2 \, n^{1/8}}\bigg\} \lesssim K_2^2 \frac{(\log p_{\epsilon_1})^{9/8}  }{n^{1/8}} 
\end{align}
for some absolute constant $C_{13} >0$. Note that, for every $t>0$ and $\eta>0$,
\begin{align*}
& \bigg|\P(\hat{Q}_{\max}\leq t)-\P\bigg(\max_{\bv\in\mathbb{N}_{\epsilon_1}}|G_{\bv}|\leq t\bigg)\bigg| \\ 
 \leq&\P\bigg(\max_{\bv\in\mathbb{N}_{\epsilon_1}}|G_{\bv}|\in[t-\eta,t+\eta]\bigg)+\P\bigg( \bigg| \hat{Q}_{\max}-\max_{\bv\in\mathbb{N}_{\epsilon_1}}|G_{\bv}| \bigg|>\eta\bigg).
\end{align*}
Taking $\eta = C_{13}\frac{K_0^4 (\log p_{\epsilon_1})^{5/8} }{K_2 \, n^{1/8}}$ in the last display, we deduce from \eqref{eq:step31} and \eqref{eq:step32} that
\begin{align*}
\sup_{t\geq0}\bigg|\P(\hat{Q}_{\max}\leq t)-\P\bigg(\max_{\bv\in\mathbb{N}_{\epsilon_1}}|G_{\bv}|\leq t\bigg)\bigg|\lesssim K_2^2 \frac{(\log p_{\epsilon_1})^{9/8} }{n^{1/8}}+K_2^2 \frac{ (\log p_{\epsilon_1})^{9/2} }{n^{1/2}}.
\end{align*}
This completes the proof. 

\subsubsection{Proof of Lemma \ref{lem:obs}}
\begin{proof}
Noting that
\begin{align*}
\|\bv_{\bSigma}-\tilde\bv_{\bSigma}\|_{\bSigma}^2&=2-2\frac{\bv^{\T}\bSigma\tilde\bv}{(\bv^{\T}\bSigma\bv)^{1/2} (\tilde\bv^{\T}\bSigma\tilde\bv)^{1/2}}\\
&=\frac{(\bv-\tilde\bv)^{\T}\bSigma(\bv-\tilde\bv)- \{ (\bv^{\T}\bSigma\bv)^{1/2}-(\tilde\bv^{\T}\bSigma\tilde\bv)^{1/2} \}^2}{(\bv^{\T}\bSigma\bv)^{1/2}(\tilde\bv^{\T}\bSigma\tilde\bv)^{1/2}},
\end{align*}
we have
\begin{align}\label{eq:discret1}
\|\bv_{\bSigma}-\tilde\bv_{\bSigma}\|_{\bSigma}^2\leq\frac{(\bv-\tilde\bv)^{\T}\bSigma(\bv-\tilde\bv)}{(\bv^{\T}\bSigma\bv)^{1/2}(\tilde\bv^{\T}\bSigma\tilde\bv)^{1/2}}\leq\gamma_s^2(\bSigma)\cdot\|\bv-\tilde\bv\|_2^2.
\end{align}
By the triangle inequality,
\begin{align*}
\big||\bv_{\bSigma}^{\T}\Mb\bv_{\bSigma}|-|\tilde\bv_{\bSigma}^{\T}\Mb\tilde\bv_{\bSigma}|\big|&\leq\big|\bv_{\bSigma}^{\T}\Mb\bv_{\bSigma}-\tilde\bv_{\bSigma}^{\T}\Mb\tilde\bv_{\bSigma}\big|\notag\\
&=\big|\bv_{\bSigma}^{\T}\Mb(\bv_{\bSigma}-\tilde\bv_{\bSigma})+(\bv_{\bSigma}-\tilde\bv_{\bSigma})^{\T}\Mb\tilde\bv_{\bSigma}\big|.
\end{align*}
It follows that
\begin{align*}
\big||\bv_{\bSigma}^{\T}\Mb\bv_{\bSigma}|-|\tilde\bv_{\bSigma}^{\T}\Mb\tilde\bv_{\bSigma}|\big|\leq\big|\bv_{\bSigma}^{\T}\Mb(\bv_{\bSigma}-\tilde\bv_{\bSigma})\big|+\big|(\bv_{\bSigma}-\tilde\bv_{\bSigma})^{\T}\Mb\tilde\bv_{\bSigma}\big|.
\end{align*}
Using Lemma \ref{lem:sigma norm}, we deduce that
\begin{align}
\big||\bv_{\bSigma}^{\T}\Mb\bv_{\bSigma}|-|\tilde\bv_{\bSigma}^{\T}\Mb\tilde\bv_{\bSigma}|\big|\leq & \, \bigg|\bigg(\frac{\bv_{\bSigma}}{\|\bv_{\bSigma}\|_2}\bigg)^{\T}_{\bSigma}\Mb\bigg(\frac{\bv_{\bSigma}-\tilde\bv_{\bSigma}}{\|\bv_{\bSigma}-\tilde\bv_{\bSigma}\|_2}\bigg)_{\bSigma}\bigg|\cdot\|\bv_{\bSigma}-\tilde\bv_{\bSigma}\|_{\bSigma}\notag\\
&+\bigg|\bigg(\frac{\bv_{\bSigma}-\tilde\bv_{\bSigma}}{\|\bv_{\bSigma}-\tilde\bv_{\bSigma}\|_2}\bigg)_{\bSigma}^{\T}\Mb\bigg(\frac{\tilde\bv_{\bSigma}}{\|\tilde\bv_{\bSigma}\|_2}\bigg)_{\bSigma}\bigg|\cdot\|\bv_{\bSigma}-\tilde\bv_{\bSigma}\|_{\bSigma} \notag \\
 \leq & \, 2\|\bv_{\bSigma}-\tilde\bv_{\bSigma}\|_{\bSigma}\cdot\sup_{\bv\in\mathbb{V}(s,d)}|\bv_{\bSigma}^{\T}\Mb\bv_{\bSigma}|. \label{eq:discret2}
\end{align}
Combining \eqref{eq:discret1} and \eqref{eq:discret2} gives
\begin{align*}
\big||\bv_{\bSigma}^{\T}\Mb\bv_{\bSigma}|-|\tilde\bv_{\bSigma}^{\T}\Mb\tilde\bv_{\bSigma}|\big|\leq&2\gamma_s(\bSigma)\|\bv-\tilde\bv\|_2 \sup_{\bv\in\mathbb{V}(s,d)}|\bv_{\bSigma}^{\T}\Mb\bv_{\bSigma}|,
\end{align*}
as desired.
\end{proof}

\subsubsection{Proof of Theorem \ref{thm:multiplier_bootstrap}}

\begin{proof}
Based on the $\epsilon_1$-net $\mathbb{N}_{\epsilon_1}^0$ described in Lemma \ref{lem:discret} and the corresponding $p_1$-dimensional Gaussian random vector $(G_1,\ldots,$ $G_{p_1})^{\T}$ introduced in the proof of Theorem \ref{thm:limiting_dist} with $p_1:=p_{\epsilon_1}^0=|\mathbb{N}^0_{\epsilon_1}|$, we aim to show that
\begin{align*}
\sup_{t\geq0}\bigg|\P\bigg(\max_{1\leq j\leq p_1}|G_j|\leq t\bigg)-\P\bigg(\hat B_{\max} \leq t  \, \bigg| \bX_1,\ldots,\bX_n\bigg)\bigg|=o_{\mathbb{P}}(1).
\end{align*}
In view of Theorem \ref{thm:limiting_dist}, it suffices to prove that
\begin{align}\label{eq:boot}
\sup_{t\geq0}\bigg|\P\bigg(\max_{1\leq j\leq p_1}|G_j|\leq t\bigg)-\P\bigg(\max_{1\leq j\leq p_1}|\hat B_{ j}|\leq t \, \bigg| \bX_1,\ldots,\bX_n\bigg)\bigg|=o_{\mathbb{P}}(1),
\end{align}
where $\{ \hat B_j \}_{j=1}^{p_1}= \{ \hat B_{\bv} \}_{\bv\in\mathbb{N}_{\epsilon_1}^0}$. In particular, we note that 
\[
\sup_{t\geq 0}\bigg|\P\bigg(\hat B_{\max} \leq t  \, \bigg| \bX_1,\ldots,\bX_n\bigg)-\P\bigg(\max_{1\leq j\leq p_1}|\hat B_{ j}|\leq t \, \bigg|\bX_1,\ldots,\bX_n\bigg)\bigg|=O_{\mathbb{P}}\bigg\{\frac{ \gamma^{9/8}_n(s,d)}{n^{1/8}} \bigg\},
\]
via the proof of Theorem \ref{thm:limiting_dist}.

By Lemma \ref{lem:comparison}, we have, 
\begin{align}\label{eq:T2}
&\sup_{t\geq0}\bigg|\P\bigg(\max_{1\leq j\leq p_1}|G_j|\leq t\bigg)-\P\bigg(\max_{1\leq j\leq p_1}|\hat B_{ j}|\leq t \, \bigg| \bX_1,\ldots,\bX_n\bigg)\bigg|\notag\\
\lesssim &~ \Delta_G^{1/3} \{ \log (2p_1)\}^{1/3}\big\{ 1\vee 2\log(2p_1)\vee\log(1/\Delta_G)\big\}^{1/3}\notag\\
\lesssim&~ \Delta_G^{1/3} \{ \log (2p_1)\}^{2/3}\vee \Delta_G^{1/3} \{ \log(1/\Delta_G ) \}^{1/3} \{\log (2p_1)\}^{1/3},
\end{align}
where $p_1=|\mathbb{N}^0_{\epsilon_1}|$ satisfies $\log p_1\lesssim \gamma_n(s,d) = s\log ( \gamma_s \cdot ed /s  ) \vee s\log n$ and
\begin{align*}
\Delta_G:=\max_{1\leq j\leq k\leq p_1}\big| \E ( G_j G_k ) - \E( \hat B_{j} \hat B_{k} | \bX_1,\ldots,\bX_n ) \big|.
\end{align*}

Next we bound $\Delta_G$. For $1\leq j\leq k \leq p_1$, we have
\begin{align*}
\E ( G_j G_k )=\E\bigg\{ \frac{(\bv_j^{\T} \bX_i)^2(\bv_k^{\T} \bX_i)^2}{\|\bv_j\|_{\bSigma}^2\|\bv_k\|_{\bSigma}^2}\bigg\}-1.
\end{align*}
By definition \eqref{eq:MBdef}, we have, for $j=1,\ldots,p_1$,
\begin{align*}
\hat B_{ j}=\frac{1}{\sqrt{n}}\sum_{i=1}^n\xi_i\bigg\{ \frac{(\bv_j^{\T}\bX_i)^2}{\|\bv_j\|_{\bSigma}^2}-1\bigg\}.
\end{align*}
It follows that, for $1\leq j\leq k\leq p_1$,
\begin{align*}
&\E( \hat B_{j} \hat B_{k} | \bX_1,\ldots,\bX_n )\\
=&\frac{1}{n}\sum_{i=1}^n\bigg\{ \frac{(\bv_j^{\T} \bX_i)^2(\bv_k^{\T} \bX_i)^2}{\|\bv_j\|_{\bSigma}^2\|\bv_k \|_{\bSigma}^2}\bigg\} - \frac{1}{n}\sum_{i=1}^n\bigg\{ \frac{(\bv_j^{\T}\bX_i)^2}{\|\bv_j\|_{\bSigma}^2}-1\bigg\}-\frac{1}{n}\sum_{i=1}^n\bigg\{ \frac{(\bv_k^{\T}\bX_i)^2}{\|\bv_k \|_{\bSigma}^2}-1\bigg\} - 1.
\end{align*}
For simplicity, we define
\begin{align}\label{eq:def_yij}
W_{ij} =\frac{\bv_j^{\T}\bX_i}{\|\bv_j\|_{\bSigma}}~~~{\rm and}~~~W_{i k} =\frac{\bv_k^{\T}\bX_i}{\|\bv_k \|_{\bSigma}}, \ \  i=1,\ldots, n, \, 1\leq j\leq k\leq p_1.
\end{align}
In this notation, we have
\begin{align*}
\Delta_G=&\max_{1\leq j\leq k \leq p_1}\bigg|\frac{1}{n}\sum_{i=1}^n \{ (W_{ij}W_{ik})^2-\E (W_{ij}W_{ik})^2 \} +\frac{1}{n}\sum_{i=1}^n (W_{ij}^2-1)+\frac{1}{n}\sum_{i=1}^n (W_{ik}^2-1)\bigg|\notag\\
\leq&\max_{1\leq j\leq k \leq p_1}\bigg|\frac{1}{n}\sum_{i=1}^n    \{ (W_{ij}W_{ik})^2-\E (W_{ij}W_{ik})^2 \} \bigg|+2\max_{1\leq j\leq p_1}\bigg|\frac{\bv_j^{\T}(\hat\bSigma-\bSigma)\bv_j}{\bv_j^{\T}\bSigma\bv_j}\bigg|.
\end{align*}
Further, define
\begin{align*}
\Delta_{G,1} =\max_{1\leq j\leq k \leq p_1}\bigg|\frac{1}{n}\sum_{i=1}^n \{ (W_{ij}W_{i k})^2-\E (W_{ij}W_{i k})^2 \}  \bigg|, \ \ 
\Delta_{G,2}  =2\max_{1\leq j \leq p_1}\bigg|\frac{\bv_j^{\T}(\hat\bSigma-\bSigma)\bv_j}{\bv_j^{\T}\bSigma\bv_j}\bigg|.
\end{align*}
The following lemma gives an upper bound for $\Delta_{G,1}$.

\begin{lemma}\label{lem:LT2}
For any $M>0$, there exists an absolute positive constant $C_{21}$ only depending on $M$ such that
\begin{align*}
\P\left[ \max_{1\leq j, k \leq p_1}\bigg|\frac{1}{n}\sum_{i=1}^n  \{ (W_{ij}W_{i k})^2-\E (W_{ij}W_{i k})^2 \}  \bigg|\geq C_{21}\sqrt{\frac{\log p_1}{n}} \right]=O(p_1^{-M}),
\end{align*}
where $p_1=|\mathbb{N}^0_{\epsilon_1}|$, $W_{ij}$ and $W_{i k}$ for $1\leq i\leq n$ and $1\leq j, k \leq p_1$ are defined in \eqref{eq:def_yij}.
\end{lemma}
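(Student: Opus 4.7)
}

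The plan is to control each pair $(j,k)$ by a sub-Weibull Bernstein inequality, then union-bound over the at most $p_1^2$ pairs, picking the constant $C_{21}$ large enough so that the resulting polynomial decay dominates $p_1^{-M}$.

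First I would quantify the tail heaviness of the summands. Under Assumption \ref{cdt1}, $\bX_i=\bSigma^{1/2}\bU_i$, so for any $\bv_j$,
\[
W_{ij}=\frac{\bv_j^\T \bX_i}{\|\bv_j\|_{\bSigma}}=\left(\frac{\bSigma^{1/2}\bv_j}{\|\bSigma^{1/2}\bv_j\|_2}\right)^{\!\T}\!\bU_i
\]
is a unit-vector linear functional of $\bU_i$, hence $\|W_{ij}\|_{\psi_2}\le K_1$. By the standard product inequality for Orlicz norms (Lemma \ref{lem:orlicz norm}-type bound) $\|W_{ij}W_{ik}\|_{\psi_1}\le 2K_1^2$, and squaring gives $\|(W_{ij}W_{ik})^2\|_{\psi_{1/2}}\le C K_1^4$; equivalently,
\[
\P\bigl(|W_{ij}W_{ik}|^2\ge t\bigr)\le 2\exp\!\bigl(-c\sqrt{t}/K_1^2\bigr),\qquad t\ge 0.
\]
In particular $\Var\{(W_{ij}W_{ik})^2\}\le \E(W_{ij}W_{ik})^4\le C K_1^8$.

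Second, I would apply a Bernstein inequality for centered $\psi_{1/2}$ summands to $Z_i=(W_{ij}W_{ik})^2-\E(W_{ij}W_{ik})^2$. The cleanest way is a truncation argument: split $Z_i=Z_i\mathds{1}(|Z_i|\le \tau)+Z_i\mathds{1}(|Z_i|>\tau)$ with $\tau\asymp K_1^4\log^2(np_1)$. The bounded part yields, via classical Bernstein,
\[
\P\!\left(\left|\frac{1}{n}\sum_{i=1}^n Z_i\mathds{1}(|Z_i|\le\tau)\right|\ge t\right)\le 2\exp\!\left(-\frac{c\,n t^2}{K_1^8+\tau\, t}\right),
\]
while the unbounded remainder is handled by the $\psi_{1/2}$-tail bound: $\P(\max_i |Z_i|>\tau)\le 2n\exp(-c\tau^{1/2}/K_1^2)$ is negligible by choice of $\tau$. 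Setting $t=C_{21}\sqrt{\log p_1/n}$ places us in the sub-Gaussian regime provided $\log^{5/2} p_1\lesssim n$, which is implied by the standing scaling $\gamma_n(s,d)=o(n^{1/9})$. Hence
\[
\P\!\left(\bigl|n^{-1}\textstyle\sum_i Z_i\bigr|\ge C_{21}\sqrt{\log p_1/n}\right)\le 2\exp\!\bigl(-c\,C_{21}^2\log p_1\bigr).
\]

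Third, a union bound over the $\binom{p_1}{2}+p_1\le p_1^2$ ordered or unordered pairs $(j,k)$ gives overall probability at most $2p_1^{2-cC_{21}^2}$. Choosing $C_{21}=C_{21}(M,K_1)$ large enough so that $cC_{21}^2\ge M+2$ yields the claimed $O(p_1^{-M})$ bound.

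The main obstacle is obtaining the Bernstein inequality for $\psi_{1/2}$ variables with the sharp $\sqrt{\log p_1/n}$ rate rather than a spurious extra $\log$ factor; the key is calibrating $\tau$ so that the truncation-induced polynomial-in-$\log p_1$ correction in the denominator of the Bernstein exponent is absorbed by the Gaussian term $K_1^8$, which requires precisely the assumed scaling on $\log p_1$ relative to $n$. All other ingredients, namely the Orlicz-norm product inequality and the classical Bernstein bound for truncated sums, are standard.
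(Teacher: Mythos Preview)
Your approach is correct and parallels the paper's at a high level---truncate, apply an exponential tail bound per pair, then union-bound---but the implementation differs. The paper truncates only one factor, replacing $W_{ik}$ by $\bar W_{ik}=W_{ik}\mathds{1}\bigl(|W_{ik}|\le \tau\sqrt{\log(p_1+n)}\bigr)$, and then runs a direct Chernoff (MGF) argument on $W_{ijk}=(W_{ij}\bar W_{ik})^2-\E(W_{ij}\bar W_{ik})^2$; the surviving sub-Gaussian factor $W_{ij}$ is absorbed because the tilting parameter is taken small enough that $\E\exp(cW_{ij}^2)<\infty$. This yields the scaling requirement $\{\log(p_1+n)\}^2\log p_1=o(n)$, roughly $\log^3 p_1=o(n)$, whereas your full-$Z_i$ truncation at level $\tau\asymp\log^2(np_1)$ plus classical Bernstein needs $\tau t\lesssim\sigma^2$, i.e.\ $\log^4(np_1)\log p_1=o(n)$, roughly $\log^5 p_1=o(n)$ (your stated $\log^{5/2}$ is a slip---recheck that balance). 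Both conditions sit well inside the standing assumption $\gamma_n(s,d)=o(n^{1/9})$, so the distinction is cosmetic here. Two small points you glossed over: the recentering bias $\E\bigl[Z_i\mathds{1}(|Z_i|\le\tau)\bigr]=-\E\bigl[Z_i\mathds{1}(|Z_i|>\tau)\bigr]$ must be shown to be $o\bigl(\sqrt{\log p_1/n}\bigr)$, which follows immediately from the $\psi_{1/2}$ tail; and, as in the paper's own proof, $C_{21}$ in fact depends on $K_1$ as well as $M$, despite the lemma's phrasing.
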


\medskip
By Lemma \ref{lem:LT2}, there exists an absolute positive constant $C_{21}$ depending only on $M$ such that
\begin{align}\label{eq:bound1}
\P\left(\Delta_{G,1}\geq C_{21}\sqrt{\frac{\log p_1}{n}} \right) =O(p_1^{-M}).
\end{align}
Turning to $\Delta_{G,2}$, by Lemma \ref{lem:connecting}, there exists a constant $C>0$ depending only on $K_2$ such that
\begin{align}\label{eq:bound2}
\P\bigg[\Delta_{G,2}\geq C \bigg\{ \frac{ \gamma^{1/2}_n(s,d)}{n^{1/2}}+\gamma_n(s,d) \frac{\log n}{n} \bigg\} \bigg]\leq \frac{4}{n}.
\end{align}
Combining \eqref{eq:bound1} and \eqref{eq:bound2}, we have with probability greater than $1-O(p_1^{-M})$,
\begin{align}\label{eq:T22}
 \Delta_{G}^{1/3} \{ \log(2p_1) \}^{2/3}\leq C  \frac{ \{ \log (2p_1) \}^{2/3}  \gamma^{1/6}_n(s,d) }{n^{1/6}} .
\end{align}
Since $x\mapsto x\log(1/x) $ is non-decreasing for $0< x \leq e^{-1}$, we have with probability greater than $1-O(p_1^{-M})$,
\begin{align}\label{eq:T23}
 \Delta_{G}^{1/3} \{ \log (2p_1) \}^{1/3} \{ \log(1 / \Delta_{G})\}^{1/3} \leq C    \frac{  \{ \log (2p_1) \}^{2/3} \gamma^{1/6}_n(s,d) }{n^{1/6}}  .
\end{align}
Putting \eqref{eq:T2}, \eqref{eq:T22}, and \eqref{eq:T23} together, we conclude that
\begin{align*}
\P\bigg\{ \sup_{t\geq0}\bigg|\P\bigg(\!\max_{1\leq j\leq p_1}\!|G_j|\!\leq\! t\bigg)\!-\!\P\bigg(\max_{1\leq j\leq p_1}|\hat B_j |\!\leq\! t \, \bigg| \bX_1,\ldots,\bX_n\bigg)\bigg|\!\geq\! C \frac{ \gamma_n^{5/6}(s,d) }{n^{1/6}}  \bigg\}   \!=\! O(p_1^{-M}).
\end{align*}
This proves \eqref{eq:boot}.

Finally, using Theorem \ref{thm:limiting_dist}, we deduce that for any $M>0$, there exists a constant $C_M>0$ depending only on $M$ and $K_1$ such that 
\begin{align*}
\P\bigg[ \sup_{t\geq0}\big|\P\big(\hat{Q}_{\max}\!\leq\! t\big)\!-\!\P (\hat B_{\max} \!\leq\! t  \, |  \bX_1,\ldots,\bX_n ) \big|\!\geq\! C_M\bigg\{ \frac{ \gamma^{9/8}_n(s,d)}{n^{1/8}}+\frac{ \gamma_n^{5/6}(s,d) }{n^{1/6}}  \bigg\} \bigg]
 \lesssim p_1^{-M}.
\end{align*}
This completes the proof. 
\end{proof}

\subsubsection{Proof of Theorems \ref{thm:limiting_dist2} and \ref{thm:limiting_dist3}}

Theorems \ref{thm:limiting_dist2} and \ref{thm:limiting_dist3} can be proved based on similar arguments used in the proofs of Theorems \ref{thm:limiting_dist} and Theorem \ref{thm:multiplier_bootstrap}. The details are hence omitted. 

\subsubsection{Proof of Theorem \ref{thm:testing}}\label{sec:testing}
\begin{proof}
To begin with, we introduce the following notations. Define
\begin{align*}
\cQ_{ \bv} =\sqrt{\frac{nm}{n+m}} \frac{\bv^{\T}(\hat\bSigma_1-\hat\bSigma_2)\bv}{\bv^{\T}\bSigma_2\bv}, \ \ \cQ_{\max}  =\sup_{\bv\in\mathbb{V}(s,d)}|\cQ_{\bv}|, \ \ \mbox{ and } \ \  \cM_{\max}  =\max_{\bv\in\mathbb{N}^0_{\epsilon_3}}|\cQ_{ \bv}|,
\end{align*}
where $\epsilon_3:=\{ m\gamma_s(\bSigma_2)\}^{-1}$. 

We divide the proof into three main steps. (i) First, using the discretized version $\cM_{\max}$ as a bridge, we show that $\cQ_{\max}$ converges weakly to the maximum of a Gaussian sequence. (ii) Next we show that the difference between $\cQ_{\max}$ and the test statistic $\hat\cQ_{\max}$ is negligible asymptotically. (iii) Finally, we show that the Gaussian maximum can be approximated by its multiplier bootstrap counterpart. The technical details are stated as lemmas with their proofs deferred to Section \ref{sec:Main_Lemmas}.

\begin{lemma}\label{lem:testinglem1}
Let Assumptions \ref{cdt5} and \ref{cdt6} be satisfied. Under the null hypothesis ${\bf H_0}: \bSigma_1=\bSigma_2$, we have the following two assertions hold. 

(i) We have
\begin{align}\label{eq:cqg1}
\P\bigg[\cQ_{\max} \leq C_{31}L_2 \bigg\{   \gamma_{m}^{1/2}(s,d)  +  (\log m)^{1/2} + \gamma_{m}(s,d) \frac{\log m}{\sqrt{m}}\bigg\} \bigg] \geq1-\frac{4}{n}-\frac{4}{m},
\end{align}
where $C_{31} >0$ is an absolute constant, $\gamma_{m}(s,d):=s\log\{ \gamma_s(\bSigma_2)\frac{ed}{s}\} \vee s\log m$, and $L_2 :=L_1^2+1$. 

(ii) Let $\mathbb{N}^0_{\epsilon_3}=\{\bu_j\}_{j=1}^{p_3}$ be an $\epsilon_3$-net with $\epsilon_3=\{ m\gamma_s(\bSigma_2)\}^{-1}$ and $p_3 =|\mathbb{N}^0_{\epsilon_3}|$. Then, there exists a $p_3$-dimensional Gaussian random vector $(\cG_1,\ldots,\cG_{p_3})^{\T}$ satisfying
\begin{align}\label{eq:cGreq}
\E( \cG_j\cG_k )=\frac{n}{m(n+m)} \sum_{i=1}^{m} \E(  \cR_{ij}\cR_{ik}) , \ \ 1\leq j\leq k\leq p_3,
\end{align}
with
\begin{align}\label{eq:Rijdef}
\cR_{ij}:=\left\{\begin{array}{ll}
\frac{m}{n} \frac{\bu_j^{\T}(\bX_i\bX_i^{\T}-\bSigma_2)\bu_j}{\bu_j^{\T}\bSigma_2\bu_j}-\frac{\bu_j^{\T}(\bY_i\bY_i^{\T}-\bSigma_2)\bu_j}{\bu_j^{\T}\bSigma_2\bu_j}, & {\rm if }~ 1\leq i\leq n,\\
-\frac{\bu_j^{\T}(\bY_{i} \bY_{i}^{\T}-\bSigma_2)\bu_j}{\bu_j^{\T}\bSigma_2\bu_j}, & {\rm if }~ n+1\leq i\leq m,
\end{array}\right.
\end{align}
(here, without loss of generality, we assume $n\leq m$) such that
\begin{align}\label{eq:cqg2}
\P\bigg[\bigg|\cQ_{\max}-\max_{1\leq j\leq p_3 }|\cG_{j}| \bigg|\geq C_{32} \frac{K_0^4 \gamma^{5/8}_{m}(s,d) }{L_2 \, m^{1/8}} \bigg] \lesssim L_2^2 \frac{  \gamma_{m}^{1/8}(s,d)}{m^{1/8}}+L_2^2 \frac{ \gamma^{9/2}_{m}(s,d)}{m^{1/2}},
\end{align}
where $C_{32}>0$ is an absolute constant and $K_0$ is the constant in Lemma \ref{lem:maximal_ineq} by taking $\alpha=1$.
\end{lemma}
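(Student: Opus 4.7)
The plan is to rewrite $\cQ_{\bv}$ as a normalized sum of independent sub-exponential random variables indexed by a single combined sample of size $m$, and then to run the same three-step template (discretization, Gaussian coupling, anti-concentration) that was used to prove Theorem~\ref{thm:limiting_dist}, with the two-sample normalization absorbed into this single empirical process. Extending~\eqref{eq:Rijdef} to every $\bv\in\mathbb{V}(s,d)$ by replacing $\bu_j$ with $\bv$ and telescoping under $\bSigma_1=\bSigma_2=:\bSigma$ gives
\[
\cQ_{\bv}=\sqrt{\tfrac{n}{m(n+m)}}\sum_{i=1}^{m}R_{i,\bv},\qquad \Var(\cQ_{\bv})=\tfrac{n}{m(n+m)}\sum_{i=1}^{m}\E(R_{i,\bv}^{2}),
\]
which exactly matches the covariance prescribed in~\eqref{eq:cGreq}. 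Under Assumption~\ref{cdt5} combined with the comparability $n\asymp m$ from Assumption~\ref{cdt6}, the coefficients $m/n$ are absolute constants, so $\sup_{\bv\in\mathbb{V}(s,d)}\|R_{i,\bv}\|_{\psi_1}$ is controlled by an absolute multiple of $L_2$, uniformly in $i$.

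For part~(i), I would first apply Lemma~\ref{lem:obs} to the real symmetric matrix $\hat{\bSigma}_1-\hat{\bSigma}_2$ normalized by $\bSigma_2$ on the $\epsilon_3$-net $\mathbb{N}^0_{\epsilon_3}$ with $\epsilon_3=\{m\gamma_s(\bSigma_2)\}^{-1}$, yielding $|\cQ_{\max}-\cM_{\max}|\le (2/m)\,\cQ_{\max}$ exactly as in~\eqref{eq:step11}; hence $\cQ_{\max}\le 2\cM_{\max}$ for $m\ge 4$. I would then run the concentration argument of Lemma~\ref{lem:connecting}, applied separately to $\bv^{\T}(\hat{\bSigma}_1-\bSigma)\bv/\bv^{\T}\bSigma\bv$ and $\bv^{\T}(\hat{\bSigma}_2-\bSigma)\bv/\bv^{\T}\bSigma\bv$ with deviation levels $t=\log n$ and $t=\log m$ respectively, then combine via the triangle inequality and a union bound over $p_3\le\exp\{C\gamma_{m}(s,d)\}$ vectors. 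Under $n\asymp m$ the two pieces are of the same order, which gives~\eqref{eq:cqg1} with the stated $1-4/n-4/m$ probability.

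For part~(ii), I would apply Lemma~\ref{lem:coupling_ineq} to the array $\{\pm R_{i,\bu_j}: 1\le i\le m,\ 1\le j\le p_3\}$, the sign flip encoding the absolute value via $\max_j|x_j|=\max_{j,\pm}(\pm x_j)$. This produces a centered Gaussian vector $(\cG_1,\dots,\cG_{p_3})$ whose covariance equals~\eqref{eq:cGreq} by construction. The three error quantities $D_1,D_2,D_3$ would be controlled exactly as in the proof of Lemma~\ref{lem:coupling}: $D_1$ via a Bernstein bound on fourth-order sub-Gaussian quantities (the source of the $K_0^4$ constant), and $D_2,D_3$ via a sub-exponential maximal inequality and the sub-Gaussian tails of $\bu_j^{\T}\bX,\bu_j^{\T}\bY$. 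Balancing $\delta\asymp K_0^4\gamma_{m}^{5/8}(s,d)/(L_2\,m^{1/8})$ as in~\eqref{eq:step31} and then invoking the anti-concentration inequality (Lemma~\ref{lem:anticoncentration}) to convert the coupling bound into a Kolmogorov-distance bound on the supremum delivers~\eqref{eq:cqg2}.

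The main obstacle I anticipate is the bookkeeping around the asymmetric two-sample structure: $R_{i,\bv}$ takes different functional forms on $i\le n$ versus $n<i\le m$, so the sub-exponential and fourth-moment constants entering Lemmas~\ref{lem:connecting} and~\ref{lem:coupling} must be tracked with all the $m/n$ factors kept explicit; the cross-term $(m/n)\cdot R_{i,\bv}^{(X)}\cdot R_{i,\bv}^{(Y)}$ for $i\le n$ needs particular care when bounding $D_1$. Once these factors are absorbed into absolute constants using Assumption~\ref{cdt6}, the remainder is a faithful two-sample translation of the one-sample arguments behind Lemma~\ref{lem:connecting} and Theorem~\ref{thm:limiting_dist}.
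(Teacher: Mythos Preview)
Your proposal is essentially correct and follows the same template as the paper's proof: for (i) decompose $\cQ_{\max}$ into the $\hat\bSigma_1-\bSigma_2$ and $\hat\bSigma_2-\bSigma_2$ pieces and apply Lemma~\ref{lem:connecting} to each with $t=\log n$ and $t=\log m$; for (ii) combine the discretization error $|\cQ_{\max}-\cM_{\max}|\le (2/m)\cQ_{\max}$ with the Gaussian coupling of Lemma~\ref{lem:coupling_ineq} applied to $\{\pm\cR_{i,\bu_j}\}$, bounding $D_1,D_2,D_3$ as in Lemma~\ref{lem:coupling} and choosing $\delta\asymp K_0^4\gamma_m^{5/8}(s,d)/(L_2 m^{1/8})$.

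Two small overshoots to trim. First, in part~(i) the phrase ``union bound over $p_3$ vectors'' is superfluous: Lemma~\ref{lem:connecting} already controls the full supremum over $\mathbb{V}(s,d)$ via the Adamczak inequality and generic chaining, so once you invoke it on each of the two pieces you are done; the paper does not discretize at all for~\eqref{eq:cqg1}. Second, in part~(ii) you do \emph{not} need anti-concentration: the target~\eqref{eq:cqg2} is a coupling probability bound $\P(|\cQ_{\max}-\max_j|\cG_j||\geq\cdots)$, not a Kolmogorov distance, and the paper obtains it directly by adding the discretization bound~\eqref{eq:cqg5} to the coupling bound~\eqref{eq:cqg6}. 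The anti-concentration step belongs to the proof of Theorem~\ref{thm:testing}, one level up.
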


\begin{lemma}\label{lem:testinglem2}
Let Assumptions \ref{cdt5} and \ref{cdt6} be satisfied. Under the null hypothesis ${\bf H_0}: \bSigma_1=\bSigma_2$, we have, as $n, m \to \infty$,
\begin{align}\label{eq:cqq1}
\P\left[|\hat\cQ_{\max} - \cQ_{\max} |\leq C_{33} L_2 \sqrt{\frac{nm}{n+m}} \bigg\{  \frac{ \gamma^{1/2}_{m}(s,d)}{\sqrt{m}} + \gamma_{m}(s,d)\frac{\log m}{m}\bigg\}^2\right]\notag \\   
 \geq1-\frac{4}{n}-\frac{4}{m},
\end{align}
where $C_{33}>0$ is an absolute constant.
\end{lemma}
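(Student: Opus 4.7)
The plan is to relate $\hat\cQ_\bv$ to $\cQ_\bv$ through a multiplicative perturbation identity. Set $a := \sqrt{(n+m)/(nm)}$ so that $a^{-2} = nm/(n+m) = (1/n+1/m)^{-1}$, and write $g_1(\bv) := \bv^\T(\hat\bSigma_1/n + \hat\bSigma_2/m)\bv$ and $g_2(\bv) := \bv^\T\bSigma_2\bv$. A direct manipulation gives
\[
\hat\cQ_\bv - \cQ_\bv = \hat\cQ_\bv \cdot \rho(\bv), \qquad \rho(\bv) := 1 - \frac{g_1(\bv)}{a^2 g_2(\bv)} .
\]
Under ${\bf H_0}: \bSigma_1=\bSigma_2$, we have $a^2 g_2(\bv) = (1/n+1/m)\bv^\T\bSigma_1\bv$, which gives the key decomposition
\[
a^2 g_2(\bv) - g_1(\bv) = -\frac{1}{n}\bv^\T(\hat\bSigma_1-\bSigma_1)\bv - \frac{1}{m}\bv^\T(\hat\bSigma_2-\bSigma_2)\bv .
\]

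Next I would uniformly bound $\eta := \sup_{\bv\in\mathbb{V}(s,d)} |\rho(\bv)|$. Using the above identity together with $\bSigma_1=\bSigma_2$,
\[
|\rho(\bv)| \le \frac{m}{n+m}\cdot\frac{|\bv^\T(\hat\bSigma_1-\bSigma_1)\bv|}{\bv^\T\bSigma_1\bv} + \frac{n}{n+m}\cdot\frac{|\bv^\T(\hat\bSigma_2-\bSigma_2)\bv|}{\bv^\T\bSigma_2\bv} .
\]
Applying Lemma~\ref{lem:connecting} to each sample with $t=\log n$ and $t=\log m$, and using Assumption~\ref{cdt6} ($n\asymp m$, hence $\gamma_n(s,d)\asymp \gamma_m(s,d)$), on an event of probability at least $1 - 4/n - 4/m$ we obtain
\[
\eta \le \eta_0 := C L_2 \bigg\{ \frac{\gamma_m^{1/2}(s,d)}{\sqrt{m}} + \gamma_m(s,d)\frac{\log m}{m}\bigg\} = o(1) .
\]

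With the identity and the $\eta$-bound in hand, the remaining step is algebraic. Since $\hat\cQ_\bv(1-\rho(\bv)) = \cQ_\bv$, we deduce $|\hat\cQ_\bv - \cQ_\bv| \le |\cQ_\bv|\eta/(1-\eta)$, and the reverse-triangle inequality yields
\[
|\hat\cQ_{\max} - \cQ_{\max}| \le \sup_{\bv\in\mathbb{V}(s,d)} |\hat\cQ_\bv - \cQ_\bv| \le \frac{\cQ_{\max}\,\eta_0}{1-\eta_0} \le 2\cQ_{\max}\eta_0 .
\]
Plugging in the bound $\cQ_{\max} \lesssim L_2\{\gamma_m^{1/2} + \sqrt{\log m} + \gamma_m\log m/\sqrt{m}\}$ from Lemma~\ref{lem:testinglem1}(i) (and absorbing $\sqrt{\log m}$ into $\gamma_m^{1/2}$ since $\gamma_m \ge s\log m$), and using the factorization
\[
\{\gamma_m^{1/2} + \gamma_m\log m/\sqrt{m}\}\cdot\{\gamma_m^{1/2}/\sqrt{m} + \gamma_m\log m/m\} = \sqrt{m}\,\{\gamma_m^{1/2}/\sqrt{m} + \gamma_m\log m/m\}^2
\]
together with $\sqrt{m}\asymp \sqrt{nm/(n+m)}$, produces the claimed bound.

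The main obstacle is the uniform-in-$\bv$ control of the denominator ratio $g_1(\bv)/\{a^2 g_2(\bv)\}$ over the entire $s$-sparse sphere $\mathbb{V}(s,d)$. This requires the generalized $\epsilon$-net argument of Lemma~\ref{lem:obs} to be applied on both samples simultaneously, and the two instances of Lemma~\ref{lem:connecting} must be intersected on a common high-probability event so that the final bound holds with probability $1 - 4/n - 4/m$, as stated.
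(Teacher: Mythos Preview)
Your proposal is correct and follows essentially the same route as the paper: both bound $|\hat\cQ_{\max}-\cQ_{\max}|$ via $\sup_{\bv}|\hat\cQ_\bv-\cQ_\bv|$, reduce the pointwise difference to $\cQ_{\max}$ times the relative denominator error $\eta$ (controlled by two applications of Lemma~\ref{lem:connecting} under $\bSigma_1=\bSigma_2$) times a factor $1/(1-\eta)$, and then invoke the bound on $\cQ_{\max}$ from Lemma~\ref{lem:testinglem1}(i) on the same high-probability event. Your multiplicative identity $\hat\cQ_\bv(1-\rho(\bv))=\cQ_\bv$ and the factorization producing the square are just a cleaner bookkeeping of the paper's three-supremum decomposition; note also that the $\epsilon$-net argument you flag as the ``main obstacle'' is already absorbed into Lemma~\ref{lem:connecting}, so no separate invocation is needed here.
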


\begin{lemma}\label{lem:testinglem3}
Let Assumptions \ref{cdt5} and \ref{cdt6} be satisfied. Under the null hypothesis ${\bf H_0}: \bSigma_1=\bSigma_2$, we have, as $n, m \to \infty$,
\begin{align*}
\sup_{t\geq0}\bigg|\P\bigg( \max_{1\leq j\leq p_3 }|\cG_{j}| \leq t\bigg)-\P(\hat\cB_{\max}\leq t \, |  \bX_1,\ldots,\bX_{n},\bY_1,\ldots,\bY_{m})\bigg|=o_{\mathbb{P}}(1).
\end{align*}
\end{lemma}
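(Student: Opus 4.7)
The plan mirrors the proof of Theorem \ref{thm:multiplier_bootstrap} in the two-sample setting. Using the $\epsilon_3$-net $\mathbb{N}^0_{\epsilon_3}=\{\bu_j\}_{j=1}^{p_3}$ from Lemma \ref{lem:testinglem1}, I would first pass from the continuum statistic $\hat\cB_{\max}$ to its discretization $\max_{1\leq j\leq p_3}|\hat\cB_j|$ with $\hat\cB_j := \hat\cB_{\bu_j}$. Applying Lemma \ref{lem:obs} separately to the numerator and to the denominator of $\hat\cB_{\bv}$ (treating $\hat\bSigma_1/n+\hat\bSigma_2/m$ as the normalizing matrix), and combining with a conditional Gaussian maximal inequality to bound $\hat\cB_{\max}$ with high probability, this discretization cost is negligible for $\epsilon_3 \asymp \{m\gamma_s(\bSigma_2)\}^{-1}$. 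Conditional on $\bX_1,\ldots,\bX_n,\bY_1,\ldots,\bY_m$, the vector $(\hat\cB_1,\ldots,\hat\cB_{p_3})^\T$ is a centered Gaussian, so I would apply the Chernozhukov-Chetverikov-Kato Gaussian comparison inequality (Lemma \ref{lem:comparison}, as used in Section \ref{sec:mainproof}) to match it with $(\cG_1,\ldots,\cG_{p_3})^\T$. This reduces the Kolmogorov distance bound to controlling the maximum covariance gap
\begin{align*}
\Delta := \max_{1\leq j,k\leq p_3}\Big|\E(\cG_j\cG_k)-\E\big(\hat\cB_j\hat\cB_k \,\big|\, \bX_1,\ldots,\bX_n,\bY_1,\ldots,\bY_m\big)\Big|.
\end{align*}
A final invocation of the anti-concentration inequality (Lemma \ref{lem:anticoncentration}) converts this covariance bound into a Kolmogorov-distance bound and simultaneously absorbs the net-approximation error.

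The main technical step is bounding $\Delta$. Computing the conditional covariance using the independence of $\{\xi_i\},\{\eta_i\}$ yields a ratio: a fourth-order empirical average of sub-Gaussian products of $\bX$'s and $\bY$'s divided by $\bu_j^\T(\hat\bSigma_1/n+\hat\bSigma_2/m)\bu_j \cdot \bu_k^\T(\hat\bSigma_1/n+\hat\bSigma_2/m)\bu_k$. Its target $\E(\cG_j\cG_k)$ defined in \eqref{eq:cGreq} is the population analogue using $\bSigma_2$ under ${\bf H_0}$. Linearizing $1/\hat x$ around the population mean, I would bound the numerator fluctuation by a two-sample variant of Lemma \ref{lem:LT2} applied to fourth-order sub-Gaussian products, and the denominator fluctuation by Lemma \ref{lem:connecting} applied to each sample. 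A union bound over the $p_3^2$ pairs then yields $\Delta = O_{\mathbb{P}}\big(L_2\sqrt{\gamma_m(s,d)/n}\big)$, which combined with the cube-root Gaussian comparison bound and the scaling $\gamma_m(s,d)=o(n^{1/9})$ assumed in Theorem \ref{thm:testing} suffices to conclude.

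The hard part is the last paragraph: the studentized bootstrap statistic carries $\hat\bSigma_1/n+\hat\bSigma_2/m$ in its denominator, whereas $\cG_j$'s covariance uses the population $\bSigma_2$. Propagating the $O_{\mathbb{P}}(\sqrt{\gamma_m(s,d)/n})$ concentration through this ratio demands a uniform lower bound on $\bu_j^\T(\hat\bSigma_1/n+\hat\bSigma_2/m)\bu_j$ over $j\in[p_3]$. This in turn rests on the boundedness of $\gamma_s(\bSigma_2)$ together with sub-Gaussian concentration of sparse quadratic forms over the net, again available from Lemma \ref{lem:connecting}. With this restricted invertibility in place, the remainder of the argument is a routine adaptation of the one-sample proof of Theorem \ref{thm:multiplier_bootstrap}.
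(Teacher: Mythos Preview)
Your approach is correct and follows essentially the same template as the paper, but the paper makes one organizational simplification that you may find cleaner. Rather than comparing $(\cG_j)$ directly to $(\hat\cB_j)$ (which, as you note, forces you to linearize the random denominator $\bu_j^\T(\hat\bSigma_1/n+\hat\bSigma_2/m)\bu_j$ inside the covariance gap $\Delta$), the paper introduces an intermediate bootstrap statistic
\[
\cB_{\bv} = \sqrt{\frac{nm}{n+m}}\,\frac{\bv^\T\{n^{-1}\sum_{i=1}^n\xi_i(\bX_i\bX_i^\T-\bSigma_2) - m^{-1}\sum_{i=1}^m\eta_i(\bY_i\bY_i^\T-\bSigma_2)\}\bv}{\bv^\T\bSigma_2\bv}
\]
with the \emph{population} denominator $\bSigma_2$. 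The conditional covariance $\E(\cB_j\cB_k\mid\text{data})$ then has exactly the same algebraic form as $\E(\cG_j\cG_k)$ in \eqref{eq:cGreq}, so the covariance gap $\Delta_\cG$ reduces to a clean sum of fourth-order and second-order empirical fluctuations, handled by a direct two-sample analogue of Lemma~\ref{lem:LT2} and Lemma~\ref{lem:connecting} without any ratio perturbation. The passage from $\max_j|\cB_j|$ back to $\hat\cB_{\max}$ is then dispatched in one stroke by invoking Lemma~\ref{lem:discret} (discretization with the population $\bSigma_2$) together with the argument of Lemma~\ref{lem:testinglem2}, which already quantifies the effect of swapping $\bSigma_2$ for $\hat\bSigma_1/n+\hat\bSigma_2/m$ in the denominator.

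The net effect is the same as yours: your linearization of $1/\hat x$ and your uniform lower bound on the empirical denominator are precisely what Lemma~\ref{lem:testinglem2} encapsulates. The paper's route just separates the Gaussian comparison step from the studentization step, which avoids carrying the random denominator through the covariance calculation and lets both pieces recycle lemmas already proved for $\hat\cQ_{\max}$.
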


\medskip
Combining \eqref{eq:cqg2} and \eqref{eq:cqq1} we deduce that there exists an absolute constant $C>0$ such that
\begin{align*}
\P\bigg[ \bigg|\hat\cQ_{\max} - \max_{1\leq j\leq p_3 }|\cG_{j}|  \bigg|\geq C \frac{K_0^4  \gamma^{5/8}_{m}(s,d)}{L_2 \, m^{1/8}} \bigg] \lesssim L_2^2 \frac{ \gamma^{1/8}_{m}(s,d) }{m^{1/8}}+L_2^2 \frac{  \gamma^{9/2}_{m}(s,d) }{m^{1/2}}.
\end{align*}
Using arguments similar to those used in the proof of Theorem \ref{thm:limiting_dist}, we deduce that
\begin{align*}
\sup_{t\geq0}\bigg|\P(\hat\cQ_{\max} \leq t)-\P\bigg( \max_{1\leq j\leq p_3 }|\cG_{j}| \leq t\bigg)\bigg|\lesssim L_2^2 \frac{  \gamma^{9/8}_{m}(s,d)}{m^{1/8}}+L_2^2 \frac{ \gamma^{9/2}_{m}(s,d)}{m^{1/2}}.
\end{align*}
This, together with Lemma \ref{lem:testinglem3} yields that
\begin{align*}
\sup_{t\geq 0}\big| \P( \hat\cQ_{\max} \leq t) - \P(\hat\cB_{\max} \leq t  \, |  \bX_1,\ldots,\bX_{n},\bY_1,\ldots,\bY_{m})  \big| = o_{\mathbb{P}}(1),
\end{align*}
which completes the proof. 
\end{proof}

\subsubsection{Proof of Theorem \ref{thm:power}}
\begin{proof}
It is equivalent to proving that for $\lambda>0$ sufficiently large,
\begin{align}\label{eq:poweraim}
\inf_{(\bSigma_1,\bSigma_2)\in\mathbb{M}(\lambda)}\P_{(\bSigma_1,\bSigma_2)}(\hat\cQ_{\max}\geq q_{\alpha})=1-o(1).
\end{align}
First we claim that $q_{\alpha}=O_{\mathbb{P}} \{ \sqrt{s\log (ed/s)} \}$. To see this, it suffices to show that
\begin{align*}
\hat\cB_{\max} = O_{\mathbb{P}} \{ \sqrt{s\log (ed/s)}  \} .
\end{align*}
It suffices to show
\begin{align*}
& \sup_{\bv\in\mathbb{V}(s,d)}\bigg|\sqrt{\frac{n+m}{nm}} \frac{\bv^\T\{ \sum_{i=1}^{n}\xi_i(\bX_i\bX_i^\T-\hat\bSigma_1) / n \!-\!  \sum_{i=1}^{m}\eta_i(\bY_i\bY_i^\T-\hat\bSigma_2) / m \}\bv}{\bv^\T(\bSigma_1/n+\bSigma_2/m)\bv}\bigg|  \\ 
 =&O_{\mathbb{P}} \{ \sqrt{s\log (ed/s)} \},
\end{align*}
since, by exactly the same argument as in the proof of Lemma \ref{lem:testinglem2}, the difference between 
\[
\sup_{\bv\in\mathbb{V}(s,d)}\bigg|\sqrt{\frac{n+m}{nm}} \frac{\bv^\T\{ \sum_{i=1}^{n}\xi_i(\bX_i\bX_i^\T-\hat\bSigma_1) / n \!-\!  \sum_{i=1}^{m}\eta_i(\bY_i\bY_i^\T-\hat\bSigma_2) / m \}\bv}{\bv^\T(\bSigma_1/n+\bSigma_2/m)\bv}\bigg|
\]
and
\[
\sup_{\bv\in\mathbb{V}(s,d)}\bigg|\sqrt{\frac{n+m}{nm}} \frac{\bv^\T\{ \sum_{i=1}^{n}\xi_i(\bX_i\bX_i^\T-\hat\bSigma_1) / n \!-\!  \sum_{i=1}^{m}\eta_i(\bY_i\bY_i^\T-\hat\bSigma_2) / m \}\bv}{\bv^\T(\hat\bSigma_1/n+\hat\bSigma_2/m)\bv}\bigg|
\]
is of order $O_{\mathbb{P}}\{ \sqrt{s\log (ed/s)}  \}$. It then reduces to show
\begin{align*}
\sup_{\bv\in\mathbb{V}(s,d)}\bigg|\bv^{\T}\bigg\{ \frac{1}{n}\sum_{i=1}^{n}\xi_i(\bX_i\bX_i^{\T}-\hat\bSigma_1)-\frac{1}{m}\sum_{i=1}^{m}\eta_i(\bY_i\bY_i^{\T}-\hat\bSigma_2)\bigg\} \bv\bigg|=O_{\mathbb{P}}\left\{ \sqrt{\frac{s\log (ed/s)}{m}} \right\},
\end{align*}
since we have, for any $\bv\in\mathbb{V}(s,d)$,
\begin{align*}
\bigg|\frac{\bv^\T\{ \sum_{i=1}^{n}\xi_i(\bX_i\bX_i^\T-\hat\bSigma_1) / n \!-\!  \sum_{i=1}^{m}\eta_i(\bY_i\bY_i^\T-\hat\bSigma_2) / m \}\bv}{\bv^\T(\bSigma_1/n+\bSigma_2/m)\bv}\bigg| \\
\lesssim  n\cdot\bigg|\bv^{\T}\bigg\{ \frac{1}{n}\sum_{i=1}^{n}\xi_i(\bX_i\bX_i^{\T}-\hat\bSigma_1)-\frac{1}{m}\sum_{i=1}^{m}\eta_i(\bY_i\bY_i^{\T}-\hat\bSigma_2)\bigg\} \bv\bigg|.
\end{align*}
This is due to the fact that $\bSigma_1,\bSigma_2\in \mathbb{M}(\lambda)$ and $m\asymp n$. Then, we can further write
\begin{align*}
&\P\left[  \sup_{\bv\in\mathbb{V}(s,d)}\bigg|\bv^{\T}\bigg\{ \frac{1}{n}\sum_{i=1}^{n}\xi_i(\bX_i\bX_i^{\T}\!-\!\hat\bSigma_1)\!-\!\frac{1}{m}\sum_{i=1}^{m}\eta_i(\bY_i\bY_i^{\T}\!-\!\hat\bSigma_2)\bigg\} \bv\bigg| \!\geq\! C_{41}\sqrt{\frac{s\log (ed/s)}{m}} \,\right] \notag\\
\leq&~ \P\left\{ H_1\geq\frac{C_{41}}{4} \sqrt{\frac{s\log (ed/s)}{m}}\right\} +\P\left\{ H_2\geq\frac{C_{41}}{4} \sqrt{\frac{s\log (ed/s)}{m}}\right\} \notag\\
&~+\P\left\{ H_3\geq\frac{C_{41}}{4} \sqrt{\frac{s\log (ed/s)}{m}}\right\} + \P\left\{ H_4\geq\frac{C_{41}}{4} \sqrt{\frac{s\log (ed/s)}{m}}\right\},
\end{align*}
where
\begin{align*}
H_1:=\sup_{\bv\in\mathbb{V}(s,d)} \bigg|\frac{1}{n}\sum_{i=1}^{n}\xi_i \bv^{\T}(\bX_i\bX_i^{\T}-\bSigma_1)\bv\bigg|,~~~H_2:=\sup_{\bv\in\mathbb{V}(s,d)}\bigg|\frac{1}{m}\sum_{i=1}^{m}\eta_i \bv^{\T}(\bY_i\bY_i^{\T}-\bSigma_2)\bv\bigg|,\notag\\
H_3:=\sup_{\bv\in\mathbb{V}(s,d)} \bigg|\frac{1}{n}\sum_{i=1}^{n}\xi_i \bv^{\T}(\hat\bSigma_1-\bSigma_1)\bv \bigg|, ~~~{\rm and}~~~H_4:=\sup_{\bv\in\mathbb{V}(s,d)} \bigg|\frac{1}{m}\sum_{i=1}^{m} \eta_i \bv^{\T}(\hat\bSigma_2-\bSigma_2)\bv \bigg|.
\end{align*}
We bound $H_1,H_2, H_3$, and $H_4$ respectively. Without loss of generality, we only need to consider $H_1$ and $H_3$. For $H_1$, define $\bar\xi_i  = \xi_i I(|\xi_i|\leq\tau\sqrt{\log n})$ for some sufficiently large $\tau >0$. Using the standard $\epsilon$-net argument, it can be shown that (using Lemma 5.4 in \cite{vershynin2010introduction})
\begin{align*}
& \P\left\{ H_1\geq\frac{C_{41}}{4} \sqrt{\frac{s\log (ed/s)}{m}}\right\} \\
\leq&\binom{d}{s}9^s\cdot\P\left\{ \bigg| \frac{1}{n}\sum_{i=1}^{n}\xi_i \bv^{\T}(\bX_i\bX_i^{\T}-\bSigma_1)\bv \bigg|  \geq\frac{C_{41}}{8} \sqrt{\frac{s\log (ed/s)}{m}} \right\}
\end{align*}
and
\begin{align*}
& \P\left\{ \bigg|\frac{1}{n}\sum_{i=1}^{n}\xi_i \bv^{\T}(\bX_i\bX_i^{\T}-\bSigma_1)\bv\bigg|\geq\frac{C_{41}}{8} \sqrt{\frac{s\log (ed/s)}{m}}\right\} \\
  \leq& n\max_{1\leq i\leq n}\P\big( |\xi_i|>\tau\sqrt{\log n} \, \big) + \P\left\{ \bigg|\frac{1}{n}\sum_{i=1}^{n}\bar\xi_i \bv^{\T}(\bX_i\bX_i^{\T}-\bSigma_1)\bv\bigg|\geq\frac{C_{41}}{8} \sqrt{\frac{s\log (ed/s)}{m}}\right\}.
\end{align*}
Similar to Lemma \ref{lem:LT2}, define $V_i:=\bar\xi_i \bv^{\T}(\bX_i\bX_i^{\T}-\bSigma_1)\bv$ and, by Markov's inequality, we have for any $t>0$,
\begin{align*}
& \P\left\{ \frac{1}{n}\sum_{i=1}^{n}\bar\xi_i \bv^{\T}(\bX_i\bX_i^{\T}-\bSigma_1)\bv\geq\frac{C_{41}}{8} \sqrt{\frac{s\log (ed/s)}{m}}\right\} \\ 
 \leq& \exp\bigg\{ -\frac{C_{41}}{8}t\sqrt{ms\log(ed/s)}\bigg\} \prod_{i=1}^n\E\exp(tV_i).
\end{align*}
Taking $t=\sqrt{s\log(ed/s)/m}$, it follows
\begin{align*}
&\P\left\{ \frac{1}{n}\sum_{i=1}^{n}\bar\xi_i \bv^{\T}(\bX_i\bX_i^{\T}-\bSigma_1)\bv\geq\frac{C_{41}}{8} \sqrt{\frac{s\log (ed/s)}{m}}\right\}\\
\leq&\exp\bigg( -\frac{C_{41}}{8}s\log(ed/s)+\frac{s\log(ed/s)}{m}\sum_{i=1}^n\E\bigg[V_i^2\exp\bigg\{ \sqrt{\frac{s\log(ed/s)}{m}}|V_i| \bigg\} \bigg] \bigg).
\end{align*}
Similar to \eqref{eq:LT3}, we get $H_1=O_{\mathbb{P}} \{ \sqrt{s\log(ed/s)/m}  \}$ as long as $s\log (ed/s)\log n=o(n)$. Furthermore, using the fact
\begin{align*}
H_3\leq\sup_{\bv\in\mathbb{V}(s,d)}|\bv^{\T}(\hat\bSigma_1-\bSigma_1)\bv|\cdot \frac{1}{n}\sum_{i=1}^{n}|\xi_i| =O_{\mathbb{P}} \{  \sqrt{s\log(ed/s)/m} \},
\end{align*}
we deduce that $H_3=O_{\mathbb{P}} \{ \sqrt{s\log(ed/s)/m} \}$. Putting together the pieces, we conclude that $q_{\alpha}=O_{\mathbb{P}} \{ \sqrt{s\log (ed/s)} \}$.

Secondly, we study $\hat\cQ_{\max}$. As in Lemma \ref{lem:testinglem2}, we bound $\cQ_{\max}'$ instead, where
\begin{align*}
\cQ_{\max}':= \sqrt{\frac{n+m}{nm}} \frac{\bv^\T(\hat\bSigma_1-\hat\bSigma_2)\bv}{\bv^\T(\bSigma_1/n+\bSigma_2/m)\bv}.
\end{align*}
This is, again, because the difference between them is of order $O_{\mathbb{P}}\{ \sqrt{s\log(ed/s)} \}$. Note that
\begin{align*}
\cQ_{\max}'\geq&\sqrt{\frac{n+m}{nm}} \sup_{\bv\in\mathbb{V}(s,d)}\bigg|\frac{\bv^\T(\bSigma_1-\bSigma_2)\bv}{\bv^\T(\bSigma_1/n+\bSigma_2/m)\bv}\bigg|-(H_5+H_6),
\end{align*}
where
\begin{align*}
H_5 & := \sqrt{\frac{n+m}{nm}} \sup_{\bv\in\mathbb{V}(s,d)}\bigg|\frac{\bv^\T(\hat\bSigma_1-\bSigma_1)\bv}{\bv^\T(\bSigma_1/n\!+\!\bSigma_2/m)\bv}\bigg|,  \\
H_6 & := \sqrt{\frac{n+m}{nm}} \sup_{\bv\in\mathbb{V}(s,d)}\bigg|\frac{\bv^\T(\hat\bSigma_2-\bSigma_2)\bv}{\bv^\T(\bSigma_1/n\!+\!\bSigma_2/m)\bv}\bigg|.
\end{align*}
Equation \eqref{eq:poweraim} then follows from the fact that $H_5 + H_6 =O_{\mathbb{P}} \{ \sqrt{s\log (ed/s)}  \}$. This completes the proof.
\end{proof}

\subsubsection{Proof of Theorem \ref{thm:lower_bound}}
\begin{proof}
Define the class of rank one perturbations of the identity matrix as follows:
\begin{align*}
\mathbb{H}(\lambda):=\bigg\{\Mb=\Ib_d+\lambda\sqrt{\frac{s\log (ed/s)}{n}}\bv\bv^{\T}:\bv\in\mathbb{V}(s,d) \bigg\}.
\end{align*}
Then, it suffices to prove the conclusion with $\mathbb{M}(\lambda)$ replaced by all $\bSigma_1\in \mathbb{H}(\lambda)$ and $\bSigma_2=\Ib_d$. Let $\lambda$ be sufficiently small. For any two distributions $F$ and $G$, we write $F\otimes G$ to represent the product measure of $F$ and $G$. In particular, we use $F^{\otimes n}$ to denote the product distribution of $n$ independent copies of $F$. 
Recall that the minimax risk is lower bounded by the Bayesian risk. Define $\P^0_{\mu_{\lambda}} =\E \{ N_d(\zero,\Ib_d+\lambda\sqrt{s\log(ed/s)/n}\bv\bv^{\T})^{\otimes n}\otimes N_d(\zero,\Ib_d)^{\otimes m} \}$ to be the mixture alternative distribution with a prior distribution on $\bv$ with $\bv$ taking values uniformly in $\mathbb{V}(s,d)$: 
\[
\P^0_{\mu_{\lambda}}(A):=\int_A{\rm d}N_d\bigg(\zero,\Ib_d+\lambda\sqrt{\frac{s\log(ed/s)}{n}\bs\bs^\T}\bigg)^{\otimes n}\cdot {\rm d}N_d(\zero,\Ib_d)^{\otimes m}{\rm d}{\cS}(\bs),
\]
where $A\in \reals^{(n+m)d}$ and $\cS$ denotes the uniform measure on $\mathbb{V}(s,d)$ with respect to the Haar measure.

Define $\P^0_{(\bSigma_1,\Ib_d)}$ to be the probability measure of $N_d(\zero,\bSigma_1)^{\otimes n}\otimes N_d(\zero,\Ib_d)^{\otimes m}$. In particular, let $\P^0_{(\Ib_d,\Ib_d)}$ be the probability measure of $N_d(\zero,\Ib_d)^{\otimes {(n+m)}}$. 
Note that, for any measurable set $A\subset\reals^{(n+m)d}$, the measure $\P^0_{\mu_{\lambda}}$ satisfies
\begin{align*}
&\sup_{\bSigma_1\in\mathbb{H}(\lambda)}\P^0_{(\bSigma_1,\Ib_d)}(A^c)\geq\P^0_{\mu_{\lambda}}(A^c).
\end{align*}
Also by the definition of the probability measure, we have
\begin{align*}
&1=\P^0_{\mu_{\lambda}}(A)+\P^0_{\mu_{\lambda}}(A^c).
\end{align*}
Due to the triangular inequality, we have
\begin{align*}
&\P^0_{\mu_{\lambda}}(A)\leq \P^0_{(\Ib_d,\Ib_d)}(A)+|\P^0_{\mu_{\lambda}}(A)-\P^0_{(\Ib_d,\Ib_d)}(A)|.
\end{align*}
Putting $A=\{\Phi_{\alpha}=1\}$, we deduce that
\begin{align}\label{eq:LB1}
\inf_{ \Phi_{\alpha}}\sup_{\bSigma_1\in\mathbb{H}(\lambda)}\P^0_{(\bSigma_1,\Ib_d)}( \Phi_{\alpha}=0)&\geq1-\alpha-\sup_{A:\P^0_{(\Ib_d,\Ib_d)}(A)\leq\alpha}\big|\P^0_{\mu_{\lambda}}(A)-\P^0_{(\Ib_d,\Ib_d)}(A)\big|\notag\\
&\geq1-\alpha-\frac{1}{2}\|\P^0_{\mu_{\lambda}}-\P^0_{(\Ib_d,\Ib_d)}\|_{\rm TV},
\end{align}
where $\|\P^0_{\mu_{\lambda}}-\P^0_{(\Ib_d,\Ib_d)}\|_{\rm TV}$ denotes the total variation distance between the two probability measures $\P^0_{\mu_{\lambda}}$ and $\P^0_{(\Ib_d,\Ib_d)}$. 

To finish the proof, we introduce another distance measurement over distributions. Let the $\chi^2$-divergence between two probability measures $\P_1$ and $\P_2$ be defined as
\begin{align*}
\chi^2(\P_1||\P_2) =\int\bigg(\frac{d\P_1}{d\P_2}-1\bigg)^2{\rm d}\P_2.
\end{align*}
In view of the proof of Proposition 2 in \cite{cai2013optimal}, there exists a function $g:(0,1/36)\mapsto(1,\infty)$ with $g(0+)=1$ such that
\begin{align*}
\chi^2(\P^0_{\mu_{\lambda}}||\P^0_{(\Ib_d,\Ib_d)})\leq g(\beta_0)-1,
\end{align*}
where $\beta_0$ tends to zero as $\lambda \to 0$. Using the Pinsker's inequality (see, for example, Lemma 2.5 in \cite{tsybakov2008introduction})
\begin{align*}
\chi^2(\P^0_{\mu_{\lambda}}||\P^0_{(\Ib_d,\Ib_d)})\geq 2\, \|\P^0_{\mu_{\lambda}}-\P^0_{(\Ib_d,\Ib_d)}\|_{\rm TV}^2,
\end{align*}
we deduce from \eqref{eq:LB1} that
\begin{align*}
\inf_{ \Phi_{\alpha}}\sup_{\bSigma_1\in\mathbb{H}(\lambda)}\P^0_{(\bSigma_1,\Ib_d)}( \Phi_{\alpha}=0)\geq 1-\alpha-o(1).
\end{align*}
This completes the proof. 
\end{proof}

\subsection{Proofs of  the supporting lemmas}\label{sec:Main_Lemmas}
\subsubsection{Proof of Lemma \ref{lem:discret}}
\begin{proof}
For any $\epsilon\in(0,1)$ fixed and $\mathbb{I}\subseteq[d]$ subject to $|\mathbb{I}|=s$, let $\mathbb{S}^{s-1}_{\mathbb{I}}\subseteq\reals^d$ be the unit Euclidean sphere whose support is $\mathbb{I}$. Further, let $\mathbb{N}^0_{\mathbb{I},\epsilon}$ denote an $\epsilon$-net of $\mathbb{S}^{s-1}_{\mathbb{I}}$ with respect to the Euclidean metric $\rho_E$ satisfying $|\mathbb{N}^0_{\mathbb{I},\epsilon}|\leq (1+2/\epsilon)^s$. Due to the decomposition
\begin{align*}
\mathbb{V}(s,d)=\{\bv\in\mathbb{S}^{d-1}:|\bv|_0=s\}=\bigcup_{\mathbb{I}\subseteq[d]:|\mathbb{I}|=s}\{\bv\in\mathbb{S}^{d-1}:\supp(\bv)=\mathbb{I}\}=\bigcup_{\mathbb{I}\subseteq[d]:|\mathbb{I}|=s}\mathbb{S}_{\mathbb{I}}^{s-1},
\end{align*}
we can construct an $\epsilon$-net of $(\mathbb{V}(s,d),\rho_E)$ by $\mathbb{N}^0_\epsilon:=\bigcup_{\mathbb{I}\subseteq[d]:|\mathbb{I}|=s}\mathbb{N}^0_{\mathbb{I},\epsilon}$. Then, it is straightforward to see that 
\[
p^0_{\epsilon}=|\mathbb{N}^0_\epsilon| \leq \binom{d}{s}\bigg(1+\frac{2}{\epsilon}\bigg)^s. 
\]
Using the binomial coefficient bound 
\[
\binom{d}{s}\leq\bigg(\frac{ed}{s}\bigg)^s, 
\]
we get
\begin{align*}
\log p^0_{\epsilon}\lesssim s\log \frac{ed}{\epsilon s}.
\end{align*}

Next we prove the second assertion. For every $\bv\in\mathbb{V}(s,d)$ with support $\mathbb{I}$ and its $\epsilon$-net $\mathbb{N}_{\mathbb{I},\epsilon}$, we can find some $\tilde\bv\in\mathbb{N}_{\mathbb{I},\epsilon}$ satisfying that $\supp(\bv)=\supp(\tilde\bv)$ and $\|\bv-\tilde\bv\|_2\leq\epsilon$. By Lemma \ref{lem:obs}, we have
\begin{align*}
\big||\bv_{\bSigma}^{\T}(\hat\bSigma-\bSigma)\bv_{\bSigma}|-|\tilde\bv_{\bSigma}^{\T}(\hat\bSigma-\bSigma)\tilde\bv_{\bSigma}|\big|\leq& \, 2\gamma_s\|\bv-\tilde\bv\|_2\cdot\sup_{\bv\in\mathbb{V}(s,d)}|\bv_{\bSigma}^{\T}(\hat\bSigma-\bSigma)\bv_{\bSigma}|\notag\\
\leq& \, 2\gamma_s\epsilon\cdot\sup_{\bv\in\mathbb{V}(s,d)}|\bv_{\bSigma}^{\T}(\hat\bSigma-\bSigma)\bv_{\bSigma}|.
\end{align*}
Therefore, we have
\begin{align*}
\sup_{\bv\in\mathbb{V}(s,d):\supp(\bv)=\mathbb{I}}|\bv_{\bSigma}^{\T}(\hat\bSigma-\bSigma)\bv_{\bSigma}|\leq2\gamma_s\epsilon\cdot\sup_{\bv\in\mathbb{V}(s,d)}|\bv_{\bSigma}^{\T}(\hat\bSigma-\bSigma)\bv_{\bSigma}|+\max_{\bv\in\mathbb{N}_{\mathbb{I},\epsilon}}|\bv_{\bSigma}^{\T}(\hat\bSigma-\bSigma)\bv_{\bSigma}|.
\end{align*}
Taking maximum over $\mathbb{I}\subseteq[d]$ with $|\mathbb{I}|=s$ on both sides yields
\begin{align*}
\sup_{\bv\in\mathbb{V}(s,d)}|\bv_{\bSigma}^{\T}(\hat\bSigma-\bSigma)\bv_{\bSigma}|\leq2\gamma_s\epsilon\cdot\sup_{\bv\in\mathbb{V}(s,d)}|\bv_{\bSigma}^{\T}(\hat\bSigma-\bSigma)\bv_{\bSigma}|+\max_{\bv\in\mathbb{N}_{\epsilon}}|\bv_{\bSigma}^{\T}(\hat\bSigma-\bSigma)\bv_{\bSigma}|.
\end{align*}
Together, the last two displays imply $\hat{Q}_{\max} \leq 2\gamma_s\epsilon\cdot \hat{Q}_{\max} +M_{\max,\epsilon}$. 
This completes the proof.
\end{proof}

\subsubsection{Proof of Lemma \ref{lem:connecting}}
\begin{proof}
We follow a standard procedure. First we show concentration of $\hat{Q}_{\max}$ around its expectation $\E  \hat{Q}_{\max}$. Next we upper bound $\E \hat{Q}_{\max}$. To prove the concentration, we define for every $\bv\in\mathbb{V}(s,d)$ that
\begin{align*}
g_{\bv}(\bX_i) =\frac{\bv^{\T}\bX_i\bX_i^{\T}\bv}{\bv^{\T}\bSigma\bv}-1.
\end{align*}
By Lemma \ref{lem:tail_ineq}, there exists an absolute constant $C_{11}>0$ such that for every $t>0$,
\begin{align}\label{eq:lem32}
\P\bigg[ \hat{Q}_{\max}\leq2\E \hat{Q}_{\max} +\max\bigg\{\underbrace{2\sigma_{\bv}\sqrt{\frac{t}{n}} }_{J_1}, \underbrace{C_{11}\frac{t}{\sqrt{n}} \bigg\| \max_{1\leq i\leq n}\sup_{\bv\in\mathbb{V}(s,d)}|g_{\bv}(\bX_i)| \bigg\|_{\psi_1}}_{J_2}\bigg\}\bigg] \notag\\
 \geq 1 - 4e^{-t},
\end{align}
where $\sigma_{\bv}^2:=\sup_{\bv\in\mathbb{V}(s,d)}\sum_{i=1}^n\E g^2 _{\bv}(\bX_i)$. We first bound $J_1$ and $J_2$, starting with $J_1$. Under Assumption~\ref{cdt1}, we have
\begin{align}\label{eq:lem33}
\sigma_{\bv}^2\leq n\cdot \!\!\!\sup_{\bv\in\mathbb{V}(s,d)}\E\bigg\{\bigg(\frac{\bv^{\T}\bX_i\bX_i^{\T}\bv}{\bv^{\T}\bSigma\bv}-1\bigg)^2\bigg\} \leq 2 n \sup_{\bv\in\mathbb{S}^{d-1}}\|\bv^{\T}\bU_i\bU_i^{\T}\bv\!-\!1\|_{\psi_1}^2  \leq 2K_2^2 \, n,
\end{align}
and hence $\sigma_{\bv}\leq K_2\sqrt{2n}$. For $J_2$, using a similar argument as in the proof of Lemma \ref{lem:discret}, we deduce that for every $0< \epsilon < ( 2\gamma_s )^{-1}$,
\begin{align*}
\sup_{\bv\in\mathbb{V}(s,d)}\bigg|\frac{\bv^{\T}\bX_i\bX_i^{\T}\bv}{\bv^{\T}\bSigma\bv}-1\bigg|\leq ( 1-2\gamma_s\epsilon )^{-1}\max_{\bv\in\mathbb{N}^0_{\epsilon}}\bigg|\frac{\bv^{\T}\bX_i\bX_i^{\T}\bv}{\bv^{\T}\bSigma\bv}-1\bigg|.
\end{align*}
By taking $\epsilon=\epsilon_4:= ( 4\gamma_s)^{-1}$, we have
\begin{align*}
\bigg\|\max_{1\leq i\leq n}\sup_{\bv\in\mathbb{V}(s,d)}|g_{\bv}(\bX_i)| \bigg\|_{\psi_1}&=\bigg\|\max_{1\leq i\leq n}\sup_{\bv\in\mathbb{V}(s,d)}\bigg|\frac{\bv^{\T}\bX_i\bX_i^{\T}\bv}{\bv^{\T}\bSigma\bv}-1\bigg|\bigg\|_{\psi_1}\notag\\
&\lesssim\bigg\|\max_{1\leq i\leq n}\max_{\bv\in\mathbb{N}^0_{\epsilon_4}}\bigg|\frac{\bv^{\T}\bX_i\bX_i^{\T}\bv}{\bv^{\T}\bSigma\bv}-1\bigg|\bigg\|_{\psi_1},
\end{align*}
where $\mathbb{N}^0_{\epsilon_4}$ is an $\epsilon_4$-net of $\mathbb{V}(s,d)$ with properties in Lemma \ref{lem:discret}. Using Lemma \ref{lem:maximal_ineq}, we have
\begin{align*}
\bigg\| \max_{1\leq i\leq n}\sup_{\bv\in\mathbb{V}(s,d)}|g_{\bv}(\bX_i)| \bigg\|_{\psi_1}\lesssim \bigg(s\log\frac{ed}{\epsilon_4 s} + \log n \bigg) \sup_{\bv\in\mathbb{V}(s,d)}\bigg\|\frac{\bv^{\T}\bX_i\bX_i^{\T}\bv}{\bv^{\T}\bSigma\bv}-1\bigg\|_{\psi_1}.
\end{align*}
It follows that
\begin{align}\label{eq:lem34}
\bigg\|\max_{1\leq i\leq n}\sup_{\bv\in\mathbb{V}(s,d)}|g_{\bv}(\bX_i)| \bigg\|_{\psi_1}\lesssim K_2 \big\{ s\log( \gamma_s \cdot ed/s) + \log n\big\}.
\end{align}
Combining \eqref{eq:lem32}, \eqref{eq:lem33}, and \eqref{eq:lem34} gives
\begin{align}\label{eq:lem35}
\P\bigg[\hat{Q}_{\max} \leq2\E \hat{Q}_{\max} + K_2 \max\bigg\{2\sqrt{2 t}, C_{11} \gamma_n(s,d) \frac{t}{\sqrt{n}}\bigg\}\bigg]\geq1-4e^{-t},
\end{align}
where we recall that $\gamma_n(s,d)=s\log(\gamma_s \cdot ed/s)\vee s\log n$. 

Now we bound the expectation $\E \hat{Q}_{\max} $. Here we use a result that involves the generic chaining complexity, $\gamma_2(T,\rho)$, of a metric space $(T,\rho)$. See Definition 2.2.19 in \cite{talagrand2014upper}. We refer the readers to \cite{talagrand2014upper} for a systematic introduction. Note that 
\begin{align*}
\sup_{\bv\in\mathbb{V}(s,d)}\|\bv_{\bSigma}^{\T}\bX_i\|_{\psi_1}=\sup_{\bv\in\mathbb{V}(s,d)}\|(\bSigma^{1/2}\bv_{\bSigma})^{\T}\bU_i\|_{\psi_1}\leq K_1,
\end{align*}
and
\begin{align*}
\|(\bv_{\bSigma}-\tilde\bv_{\bSigma})^{\T}\bX_i\|_{\psi_2}=\|(\bv_{\bSigma}-\tilde\bv_{\bSigma})^{\T}\bSigma^{1/2}\bU_i\|_{\psi_2}\leq K_1  \|\bv_{\bSigma}-\tilde\bv_{\bSigma}\|_{\bSigma},
\end{align*}
for any $\bv,\tilde\bv\in\mathbb{V}(s,d)$. It follows from Lemma \ref{lem:gc1} and Lemma \ref{lem:gc2} that
\begin{align}\label{eq:lem36}
 \E \hat{Q}_{\max} &= \sqrt{n} \, \E\bigg\{ \!\!\sup_{\bv\in\mathbb{V}(s,d)}\bigg|\frac{1}{n}\sum_{i=1}^n\big(\bv_{\bSigma}^{\T}\bX_i\big)^2\!\!-\!\!1\bigg|\bigg\} \notag \\
&\lesssim K_1^2\bigg\{\! \gamma_2(\mathbb{V}(s,d),\|\cdot\|_{\bSigma})\!+\!
\frac{\gamma_2(\mathbb{V}(s,d),\|\cdot\|_{\bSigma})^2}{\sqrt{n}}\bigg\}.
\end{align}
By Lemma \ref{lem:gc3}, we have
\begin{align}\label{eq:lem37}
\gamma_2(\mathbb{V}(s,d),\|\cdot\|_{\bSigma})\lesssim\E\bigg\{ \sup_{\bv\in\mathbb{V}(s,d)}(\bv_{\bSigma}^{\T}\bZ) \bigg\},
\end{align}
where $\bZ\stackrel{{\sf d}}{=}N_d(\zero,\bSigma)$. Similar to the proof of Lemma \ref{lem:discret}, we have
\begin{align}\label{eq:lem38}
\E \bigg\{ \sup_{\bv\in\mathbb{V}(s,d)}(\bv_{\bSigma}^{\T}\bZ ) \bigg\} \leq\frac{4}{3}\E\bigg\{ \max_{\bv\in\mathbb{N}^0_{\epsilon_4}}(\bv_{\bSigma}^{\T}\bZ) \bigg\} \leq 2(  \log |\mathbb{N}^0_{\epsilon_4}|  )^{1/2}\lesssim  \gamma^{1/2}_n(s,d) ,
\end{align}
where $\epsilon_4= ( 4\gamma_s )^{-1}$. Together, \eqref{eq:lem36}, \eqref{eq:lem37}, and \eqref{eq:lem38} imply that
\begin{align}\label{eq:lem39}
\E \hat{Q}_{\max} \lesssim K_1^2  \, \gamma^{1/2}_n(s,d).
\end{align}
Combining \eqref{eq:lem35} and \eqref{eq:lem39}, we deduce that
\begin{align*}
\P\bigg[ \hat{Q}_{\max} \leq C_{12}K_2 \, \gamma^{1/2}_n(s,d)+ K_2 \max\bigg\{2\sqrt{2 t}, C_{11}  \gamma_n(s,d) \frac{t}{\sqrt{n}}\bigg\}\bigg]\geq1-4e^{-t}.
\end{align*}
This completes the proof.
\end{proof}

\subsubsection{Proof of Lemma \ref{lem:coupling}}
\begin{proof}
Recall that
\begin{align*}
M_{\max,\epsilon} = \max_{1\leq j\leq p_{\epsilon}}\bigg|\frac{1}{\sqrt{n}}\sum_{i=1}^n R_{ij}\bigg| , 
\end{align*}
and $\E R_{ij} =0$ for $i=1,\ldots,n$ and $j=1,\ldots,p_{\epsilon}$. Moreover, define $R_{ij} =-R_{i,j-p_{\epsilon}}$ for $j=p_{\epsilon}+1,\ldots,2p_{\epsilon}$ and put $\bR_i =(R_{i1},\ldots,R_{i,2p_{\epsilon}})^{\T}$ for $i=1,\ldots,n$. Let $\bG =(G_1,\ldots,G_{p_{\epsilon}},-G_1,\ldots,-G_{p_{\epsilon}})^{\T}$ be a $(2p_{\epsilon})$-dimensional Gaussian random vector satisfying
\begin{align*}
\E ( G_j G_k )=\E (R_{ij}R_{ik}), \ \  1\leq j\leq k\leq p_{\epsilon} .
\end{align*}
Applying Lemma \ref{lem:coupling_ineq} to $\{\bR_i\}_{i=1}^n$ and $\bG$, we have, for any $\delta>0$,
\begin{align}\label{eq:coupling1}
&\P\bigg(\bigg|\max_{1\leq j\leq p_{\epsilon}} \bigg| \frac{1}{\sqrt{n}}\sum_{i=1}^n R_{ij} \bigg|-\max_{1\leq j\leq p_{\epsilon}}|G_j|\bigg| \geq 16\delta\bigg)\notag\\
\lesssim& \, D_1 \frac{\log(2p_{\epsilon}\vee n)}{\delta^2n}+ (D_2+D_3) \frac{ \{ \log(2p_{\epsilon}\vee n)\}^2}{\delta^3n^{3/2}}+\frac{\log n}{n},
\end{align}
where we put
\begin{align*}
D_1&=\E\bigg[\max_{1\leq j, k\leq 2p_{\epsilon}}\bigg|\sum_{i=1}^n \{ R_{ij}R_{ik}-\E ( R_{ij}R_{i k}) \} \bigg| \bigg],\\
D_2&=\E\bigg( \max_{1\leq j\leq 2p_{\epsilon}}\sum_{i=1}^n|R_{ij}|^3\bigg) ,\\
D_3&=\sum_{i=1}^n \E\bigg[\max_{1\leq j\leq 2p_{\epsilon}}|R_{ij}|^3  \mathds{1}\bigg\{ \max_{1\leq j\leq 2p_{\epsilon}}|R_{ij}|>\frac{\delta n^{1/2}}{\log(2p_{\epsilon}\vee n)}\bigg\} \bigg].
\end{align*}
Note that, for $i=1,\ldots,n$,
\begin{align*}
\E\bigg( \max_{1\leq j\leq 2p_{\epsilon}}R_{ij}^4\bigg) \geq\frac{\delta n^{1/2}}{\log(2p_{\epsilon}\vee n)} \E\bigg[\max_{1\leq j\leq 2p_{\epsilon}}|R_{ij}|^3  \mathds{1}\bigg\{ \max_{1\leq j\leq 2p_{\epsilon}}|R_{ij}|>\frac{\delta n^{1/2}}{\log(2p_{\epsilon}\vee n)}\bigg\} \bigg],
\end{align*}
we have
\begin{align*}
D_3\leq\frac{\log(2p_{\epsilon}\vee n)}{\delta n^{1/2}} \sum_{i=1}^n\E\bigg( \max_{1\leq j\leq 2p_{\epsilon}}R_{ij}^4\bigg).
\end{align*}
Hence, we deduce from \eqref{eq:coupling1} that
\begin{align}\label{eq:coupling2}
\P\bigg(\bigg|\max_{1\leq j\leq p_{\epsilon}}\bigg|\frac{1}{\sqrt{n}}\sum_{i=1}^n R_{ij}\bigg|-\max_{1\leq j\leq p_{\epsilon}}|G_j|\bigg|\geq16\delta\bigg)\lesssim & ~ \, D_1 \frac{\log(2p_{\epsilon}\vee n)}{\delta^2n}+D_2 \frac{\{ \log(2p_{\epsilon}\vee n)\}^2}{\delta^3n^{3/2}}\notag\\
&~ + D_4  \frac{\{ \log(2p_{\epsilon}\vee n)\}^3}{\delta^4 n^2}+\frac{\log n}{n},
\end{align}
where
\begin{align*}
D_4:=\sum_{i=1}^n\E\bigg( \max_{1\leq j\leq 2p_{\epsilon}}R_{ij}^4\bigg) .
\end{align*}

Next we bound $D_1$, $D_2$, and $D_4$, starting with $D_1$. By Lemma \ref{lem:cher1},
\begin{align}\label{eq:coupling_B1}
D_1\lesssim&~ \sqrt{\log ( 2p_{\epsilon} )} \max_{1\leq j\leq 2p_{\epsilon}}\bigg(\sum_{i=1}^n\E R_{ij}^4 \bigg)^{1/2}+\log  ( 2p_{\epsilon} )  \bigg\{ \E \bigg( \max_{1\leq i\leq n}\max_{1\leq j\leq 2p_{\epsilon}}  R_{ij}^4 \bigg) \bigg\}^{1/2}\notag\\
=& ~ \sqrt{\log  ( 2p_{\epsilon} )} \underbrace{\max_{1\leq j\leq p_{\epsilon}}\bigg(\sum_{i=1}^n\E R_{ij}^4 \bigg)^{1/2}}_{D_{11}}+\log ( 2p_{\epsilon} )  \underbrace{\bigg\{ \E \bigg( \max_{1\leq i\leq n}\max_{1\leq j\leq p_{\epsilon}}R_{ij}^4 \bigg) \bigg\}^{1/2}}_{D_{12}}.
\end{align}
For $D_{11}$, using Lemma \ref{lem:orlicz norm}, we deduce that
\begin{align*}
\E  R_{ij}^4 \leq	4!  \|R_{ij}\|_{\psi_1}^4= 4!   \bigg\|\frac{\bv_j^{\T}\bSigma^{1/2}\bU_i\bU_i^{\T}\bSigma^{1/2}\bv_j}{\bv_j^{\T}\bSigma\bv_j}-1\bigg\|_{\psi_1}^4.
\end{align*}
This gives $\E  R_{ij}^4 \leq   4! \, \sup_{\bv\in\mathbb{S}^{d-1}}\|\bv^{\T}\bU_i\bU_i^{\T}\bv-1\|_{\psi_1}^4=4! \, K_2^4$ and hence
\begin{align}\label{eq:coupling_B11}
D_{11}\leq (4!\, K_2^4\, n)^{1/2}=24^{1/2}K_2^2 \, \sqrt{n}.
\end{align}
To bound $D_{12}$, by Lemmas \ref{lem:orlicz norm} and \ref{lem:maximal_ineq}, we have
\begin{align*}
\E \bigg( \max_{1\leq i\leq n}\max_{1\leq j\leq p_{\epsilon}}R_{ij}^4 \bigg)  \leq &~ 4! \, \bigg\|\max_{1\leq i\leq n}\max_{1\leq j\leq p_{\epsilon}}|R_{ij}|  \bigg\|_{\psi_1}^4 \notag \\
\leq & ~4! \,  K_0^4 \{ \log(np_{\epsilon}+1)\}^4 \max_{1\leq i\leq n}\max_{1\leq j\leq p_{\epsilon}}\|R_{ij}\|_{\psi_1}^4\notag\\
\leq& ~ 4! \,  K_0^4 \{ \log(np_{\epsilon}+1)\}^4 \sup_{\bv\in\mathbb{S}^{d-1}}\|\bv^{\T}\bU\bU^{\T}\bv-1\|_{\psi_1}^4\notag\\
=&~ 4! \, K_0^4K_2^4 \{ \log(np_{\epsilon}+1) \}^4 ,
\end{align*}
which further implies
\begin{align}\label{eq:coupling_B12}
D_{12}\leq24^{1/2}K_0^2K_2^2 \{ \log(np_{\epsilon}+1) \}^2.
\end{align}
Combining \eqref{eq:coupling_B1}, \eqref{eq:coupling_B11}, and \eqref{eq:coupling_B12} yields
\begin{align}\label{eq:coupling3}
D_1\lesssim K_2^2 (\log2p_{\epsilon})^{1/2} \sqrt{n} +  K_0^2K_2^2 \{ \log(np_{\epsilon}+1)\}^2	\log(2p_{\epsilon}).
\end{align}
For $D_2$, it follows from Lemma \ref{lem:cher2} that
\begin{align}\label{eq:coupling_B2}
D_2\!=\!\E\bigg(\! \max_{1\leq j\leq p_{\epsilon}}\sum_{i=1}^n|R_{ij}|^3\!\bigg) \!\lesssim\! \max_{1\leq j\leq p_{\epsilon}}\underbrace{\sum_{i=1}^n\E |R_{ij}|^3 }_{D_{21}}+ ( \log p_{\epsilon} ) \underbrace{\E\bigg(\! \max_{1\leq i\leq n}\max_{1\leq j\leq p_{\epsilon}}|R_{ij}|^3\!\bigg)}_{D_{22}}.
\end{align}
By Lemma \ref{lem:orlicz norm}, we have $\E |R_{ij}|^3 \leq 3! \, K_2^3$ and hence
\begin{align}\label{eq:coupling_B21}
D_{21}\leq 3! \, K_2^3 \, n.
\end{align}
Further, in view of Lemma \ref{lem:maximal_ineq}, we have
\begin{align}\label{eq:coupling_B22}
D_{22}\leq  3! \,  K_0^3 \{ \log(np_{\epsilon}+1) \}^3 \max_{1\leq i\leq n}\max_{1\leq j\leq p_{\epsilon}}\|R_{ij}\|_{\psi_1}^3 \leq   3! \, K_0^3K_2^3 \{ \log(np_{\epsilon}+1)\}^3.
\end{align}
Together, \eqref{eq:coupling_B2}, \eqref{eq:coupling_B21}, and \eqref{eq:coupling_B22} yield that
\begin{align}\label{eq:coupling4}
D_2\lesssim K_2^3 \, n + K_0^3 K_2^3 \{ \log(np_{\epsilon}+1)\}^3\log p_{\epsilon}.
\end{align}
For $D_4$, using Lemmas \ref{lem:orlicz norm} and \ref{lem:maximal_ineq}, we deduce that

\begin{align*}
& \E\bigg( \max_{1\leq j\leq 2p_{\epsilon}}R_{ij}^4\bigg)  =  \E\bigg( \max_{1\leq j\leq p_{\epsilon}}R_{ij}^4\bigg) \leq 4! \, \bigg\|\max_{1\leq j\leq p_{\epsilon}}|R_{ij}|  \bigg\|_{\psi_1}^4 \notag \\
 & \leq   4! \, K_0^4 \{ \log(p_{\epsilon}+1)\}^4 \sup_{\bv\in\mathbb{S}^{d-1}}\|\bv^{\T}\bU\bU^{\T}\bv-1\|_{\psi_1}^4 =   4! \,  K_0^4K_2^4 \{ \log(p_{\epsilon}+1)\}^4.
\end{align*}
Consequently, we have
\begin{align}\label{eq:coupling5}
D_4\lesssim K_0^4 K_2^4 \,  \{ \log(p_{\epsilon}+1)\}^4 n .
\end{align}

Finally, putting \eqref{eq:coupling2}, \eqref{eq:coupling3}, \eqref{eq:coupling4}, and \eqref{eq:coupling5} together, we obtain
\begin{align*}
&\P\bigg(\bigg|\max_{1\leq j\leq p_{\epsilon}}\bigg|\frac{1}{\sqrt{n}}\sum_{i=1}^n R_{ij}\bigg|-\max_{1\leq j\leq p_{\epsilon}}|G_j|\bigg|\geq16\delta\bigg)\notag\\
\lesssim & K_2^2 \frac{ \sqrt{ \log ( 2p_{\epsilon} ) } \log(2p_{\epsilon}\vee n)}{\delta^2 \sqrt{n}}+ K_0^2K_2^2 \frac{\log( 2p_{\epsilon})\{ \log(np_{\epsilon}+1)\}^2\log(2p_{\epsilon}\vee n)}{\delta^2n}  \\
& \quad  + K_2^3\frac{ \{ \log(2p_{\epsilon}\vee n)\}^2}{\delta^3 \sqrt{n}} + K_0^3K_2^3 \frac{ (\log p_{\epsilon}) \{ \log(np_{\epsilon}+1)\}^3 \{ \log(2p_{\epsilon}\vee n) \}^2}{\delta^3 n^{3/2}} \\
& \quad + K_0^4K_2^4 \frac{ \{ \log(p_{\epsilon}+1)\}^4 \{ \log(2p_{\epsilon}\vee n)\}^3}{\delta^4n}+\frac{\log n}{n},
\end{align*}
as desired.
\end{proof}

\subsubsection{Proof of Lemma \ref{lem:LT2}}

\begin{proof}
Define $\bar W_{ik} =W_{ik} \mathds{1}\big(|W_{ik}|\leq\tau \{ \log(p_1+n) \}^{1/2}\big)$ for some sufficiently large $\tau$. Then, for some constant $C_{22}>0$, 
\begin{align*}
&\P\left\{ \max_{1\leq j, k\leq p_1}\bigg|\frac{1}{n}\sum_{i=1}^n(W_{ij}W_{ik})^2-\E (W_{ij}W_{ik})^2 \bigg|\geq C_{22}\sqrt{\frac{\log p_1}{n}}\right\} \\
\leq & \, np_1 \max_{\substack{1\leq i\leq n, 1\leq k\leq p_1 }}\P\big[ |W_{ik}|\!>\!\tau \{ \log(p_1+n)\}^{1/2}\big]\\
&+\P\left\{ \max_{1\leq j, k\leq p_1}\bigg|\frac{1}{n}\sum_{i=1}^n(W_{ij}\bar W_{ik})^2-\E (W_{ij}W_{ik})^2 \bigg|\!\geq\! C_{22}\sqrt{\frac{\log p_1}{n}} \right\} .
\end{align*}
Using Cauchy-Schwarz inequality, we deduce that, for any $\eta>0$,
\begin{align*}
\big|\E (W_{ij}W_{ik})^2 -\E (W_{ij}\bar W_{i k})^2 \big|\leq& (\E W_{ij}^4 )^{1/2}\cdot\big[\E\big\{ W_{ik}^4\cdot\mathds{1}\big(|W_{ik}|>\tau\{ \log(p_1+n)\}^{1/2}  \big) \big\} \big]^{1/2}\\
\leq& (\E W_{ij}^4 )^{1/2} (n+p_1)^{- \tau^2\eta/4 }\cdot\big[\E\big\{ W_{ik}^4\exp( \eta W_{ik}^2 / 2 )\big\} \big]^{1/2}.
\end{align*}
By the elementary inequality $x^2e^x\leq e^{2x}$, $x>0$, we have, for any $\eta>0$,
\begin{align*}
\big|\E (W_{ij}W_{i k})^2 - \E (W_{ij}\bar W_{ik})^2  \big|	  \leq (\E W_{ij}^4 )^{1/2} (n+p_1)^{- \tau^2\eta/4 }\cdot 2\eta^{-1} \{ \E \exp(\eta W_{ik}^2) \}^{1/2}.
\end{align*}
Under Assumptions \ref{cdt1} and \ref{cdt2}, for any $\eta\in(0,K_1^{-2})$, there exists a  constant $C_{23} >0$ such that
\begin{align*}
\big|\E (W_{ij}W_{i k})^2 - \E (W_{ij}\bar W_{i k})^2 \big|\leq 2C_{23} \, \eta^{-1} (n+p_1)^{- \tau^2\eta/4 }.
\end{align*}
Hence, for all sufficiently large $\tau$, $n$, and $p_1$, we have
\begin{align*}
2C_{23} \, \eta^{-1} (n+p_1)^{- \tau^2\eta/4 } \leq \frac{C_{22}}{2}\sqrt{\frac{\log p_1}{n}}.
\end{align*}
It follows that
\begin{align*}
&\P \left\{ \max_{1\leq j, k\leq p_1}\bigg|\frac{1}{n}\sum_{i=1}^n(W_{ij}W_{ik})^2-\E (W_{ij}W_{ik})^2 \bigg|\geq C_{22}\sqrt{\frac{\log p_1}{n}}\right\} \\
\!\leq\!& \,  \underbrace{np_1 \max_{\substack{1\leq i\leq n, 1\leq k \leq p_1}}\P\big[ |W_{ik}|\!>\!\tau \{ \log(p_1\!+\!n) \}^{1/2}\big] }_{F_1}\\
&+\underbrace{\P\left\{ \max_{1\leq j,k\leq p_1}\bigg|\frac{1}{n}\sum_{i=1}^n(W_{ij}\bar W_{ik})^2\!-\!\E (W_{ij}\bar W_{ik})^2 \bigg|\!\geq\! \frac{C_{22}}{2}\sqrt{\frac{\log p_1}{n}}\right\} }_{F_2}.
\end{align*}
For $F_1$, we have, for any $\eta\in(0,K_1^{-2})$, $M>0$, and sufficiently large $\tau$,
\begin{align*}
F_1\leq np_1(n+p_1)^{-\tau^2\eta} \max_{\substack{1\leq i\leq n, 1\leq k \leq p_1}}\E\{ \exp(\eta W_{i k}^2) \} =O(p_1^{-M}).
\end{align*}
To bound $F_2$, it suffices to show that, for any $M>0$, there exists an absolute constant $C_{24}>0$ depending only on $M$ such that
\begin{align}\label{eq:LT1}
\P\left\{ \frac{1}{n}\sum_{i=1}^n(W_{ij}\bar W_{i k})^2-\E (W_{ij}\bar W_{i k})^2 \geq C_{24}\sqrt{\frac{\log p_1}{n}} \right\} =O(p_1^{-M-2}).
\end{align}
Define $W_{ijk} =(W_{ij}\bar W_{ik})^2-\E (W_{ij}\bar W_{ik})^2 $ for $1\leq i\leq n$ and $1\leq j, k\leq p_1$. By Markov's inequality, we have for any $t>0$,
\begin{align*}
& \P\left[ \frac{1}{n}\sum_{i=1}^n \{ (W_{ij}\bar W_{ik})^2-\E (W_{ij}\bar W_{ik})^2 \} \geq C_{25}\sqrt{\frac{\log p_1}{n}} \right] \\
	 \leq&	\exp\big( -C_{25} \, t\sqrt{n\log p_1} \, \big) \prod_{i=1}^n \E \exp(t W_{ijk}) .
\end{align*}
Using inequalities $e^x\leq1+x+x^2 e^{|x|}$ and $1+x\leq e^x$ for $x>0$, we deduce that
\begin{align*}
&\P\left\{ \frac{1}{n}\sum_{i=1}^n(W_{ij}\bar W_{ik})^2\!-\!\E (W_{ij}\bar W_{ik })^2 \geq C_{25}\sqrt{\frac{\log p_1}{n}} \right\} \\
\leq&\exp\big( -C_{25} \, t\sqrt{n\log p_1} \, \big)  \prod_{i=1}^n \big[ 1 + \E\{ t^2 W_{ijk}^2\exp(t|W_{ij k}|) \} \big] \\
\leq&\exp\bigg[ -C_{25} \, t\sqrt{n\log p_1}+\sum_{i=1}^n\E \{ t^2W_{ij k}^2\exp(t|W_{ijk}|) \} \bigg].
\end{align*}
Taking $t=\eta\tau^{-2}\sqrt{ ( \log p_1 ) /n}$ gives
\begin{align}\label{eq:LT2}
&P\left\{  \frac{1}{n}\sum_{i=1}^n(W_{ij}\bar W_{ik})^2-\E (W_{ij}\bar W_{ik})^2 \geq C_{25}\sqrt{\frac{\log p_1}{n}} \right\} \notag\\
\leq&\exp\bigg[  -C_{25}\frac{\eta}{\tau^2}\log p_1+\frac{\eta^2\log p_1}{\tau^4 n}\sum_{i=1}^n\underbrace{\E\bigg\{ W_{ij k}^2\exp\bigg(\frac{\eta}{\tau^2}\sqrt{\frac{\log p_1}{n}}|W_{ijk}|\bigg)\bigg\}}_{F_3} \bigg].
\end{align}
Using Cauchy-Schwarz inequality, we have
\begin{align*}
 F_3\leq\{\E(W_{ijk}^4)\}^{1/2}\bigg[\E\bigg\{ \exp\bigg(\frac{2\eta}{\tau^2}\sqrt{\frac{\log p_1}{n}}|W_{ij k}|\bigg)\bigg\}\bigg]^{1/2}  .
\end{align*}
According to Assumption \ref{cdt1}, for any $\eta\in(0,K_1^{-2})$ and sufficiently large $n$ and $p_1$ satisfying that $ \{ \log(p_1+n)\}^2 \log p_1 =o(n)$, there exists a constant $C_{26}>0$ depending on $K_1$, $\eta$, and $\tau$ such that
\begin{align}\label{eq:LT3}
 \E\bigg\{\exp\bigg(\frac{2\eta}{\tau^2}\sqrt{\frac{\log p_1}{n}}|W_{ij k}|\bigg)\bigg\}  \leq C_{26}.
\end{align}
Consequently, there exists a positive constant $C_{27}$ depending on $K_1$, $\eta$, and $\tau$ such that
\begin{align}\label{eq:LT4}
F_3\leq C_{27}.
\end{align}
Combining \eqref{eq:LT2} and \eqref{eq:LT4}, we obtain that for $\eta\in(0,K_1^{-2})$ and sufficiently large $\tau$, $n$, and $p_1$,
\begin{align*}
& P\left\{  \frac{1}{n}\sum_{i=1}^n(W_{ij}\bar W_{ik})^2-\E (W_{ij}\bar W_{ik})^2 \geq C_{25}\sqrt{\frac{\log p_1}{n}} \right\} \\
 \leq& \exp\bigg(-C_{25}\frac{\eta}{\tau^2}\log p_1+C_{27}\frac{\eta^2}{\tau^4}\log p_1\bigg)
\end{align*}
for any $C_{25}>0$. Therefore, for any $M>0$, there exists a constant $C_{24}>0$ depending only on $M$ such that \eqref{eq:LT1} holds. Similarly, it can be shown that
\begin{align*}
\P \left\{ \frac{1}{n}\sum_{i=1}^n(W_{ij}\bar W_{ik})^2-\E (W_{ij}\bar W_{ik})^2 \leq- C_{24}\sqrt{\frac{\log p_1}{n}} \right\} =O(p_1^{-M-2}).
\end{align*}
By taking $C_{22}=2C_{24}$, we get $F_2=O(p_1^{-M})$, which completes the proof.
\end{proof}

\subsubsection{Proof of Lemma \ref{lem:testinglem1}}
\begin{proof}
Similar to Lemma \ref{lem:discret}, we have, for any $\epsilon\in(0,1)$,
\begin{align}\label{eq:cqg3}
\cQ_{\max} \leq2\gamma_s(\bSigma_2)\epsilon\cdot\cQ_{\max} +\cM_{\max } .
\end{align}
Of note, we have
\begin{align*}
\cQ_{\max} \leq\sqrt{\frac{m}{n+m}} \underbrace{\sup_{\bv\in\mathbb{V}(s,d)}\bigg|\frac{\sqrt{n} \,\bv^{\T}(\hat\bSigma_1-\bSigma_2)\bv}{\bv^{\T}\bSigma_2\bv}\bigg|}_{\cQ_{\max, 1}}+\sqrt{\frac{n}{n+m}} \underbrace{\sup_{\bv\in\mathbb{V}(s,d)}\bigg|\frac{\sqrt{m} \,\bv^{\T}(\hat\bSigma_2-\bSigma_2)\bv}{\bv^{\T}\bSigma_2\bv}\bigg|}_{\cQ_{\max, 2}}.
\end{align*}
Using Lemma \ref{lem:connecting}, we deduce that, for any $t>0$,
\begin{align}\label{eq:n1bound}
\P\bigg(\cQ_{\max, 1}\leq C L_2 \bigg[    \gamma^{1/2}_{n}(s,d) +  \max\bigg\{  \sqrt{t},    \frac{t}{\sqrt{n}} \gamma_{n}(s,d) \bigg\}\bigg]\bigg)\geq1-4e^{-t},
\end{align}
and
\begin{align}\label{eq:n2bound}
\P\bigg(\cQ_{\max, 2}\leq C L_2 \bigg[    \gamma_{m}^{1/2}(s,d) + \max\bigg\{  \sqrt{t},      \frac{t}{\sqrt{m}}  \gamma_{m}(s,d) \bigg\}\bigg]\bigg)\geq1-4e^{-t},
\end{align}
where $C>0$ is an absolute constant, $L_2 =L_1^2+1$, $\gamma_{n}(s,d)=s\log \{ \gamma_s(\bSigma_2)\frac{ed}{s} \}\vee s\log n$, and $\gamma_{m}(s,d)=s\log\{ \gamma_s(\bSigma_2) \frac{ed}{s} \} \vee s\log m$. It follows that, for any $t_1,t_2>0$,
\begin{align*}
\P\bigg(\cQ_{\max}   \leq C L_2 \bigg[ &\, \gamma^{1/2}_{n}(s,d)  + \gamma_{m}^{1/2}(s,d) +\max\bigg\{ \sqrt{t_1},    \frac{t_1}{\sqrt{n}}  \gamma_{n}(s,d) \bigg\}\notag\\
 &\,  +\max\bigg\{ \sqrt{t_2},    \frac{t_2}{\sqrt{m}} \gamma_{m}(s,d) \bigg\}\bigg]  \bigg)\geq1-4e^{-t_1}-4e^{-t_2}.
\end{align*}
Taking $t_1=\log n$ and $t_2=\log m$ gives
\begin{align*}
\P\bigg[\cQ_{\max} \leq C_{31} L_2 \bigg\{  \gamma_{m}^{1/2}(s,d) + \sqrt{\log m}+  \frac{\log m}{\sqrt{m}} \gamma_{m}(s,d) \bigg\}  \bigg]	\geq1-\frac{4}{n}-\frac{4}{m},
\end{align*}
which proves \eqref{eq:cqg1}. Combining \eqref{eq:cqg1} and \eqref{eq:cqg3}, and taking $\epsilon=\epsilon_3= \{ m\gamma_s(\bSigma_2) \}^{-1}$, we obtain
\begin{align}\label{eq:cqg5}
\P\bigg( |\cQ_{\max} - \cM_{\max} | \leq C L_2\bigg[   \frac{ \gamma^{1/2}_{m}(s,d) }{m}+\max\bigg\{ \frac{\sqrt{ \log m}}{m},   & \, \frac{\log m}{m^{3/2}}\gamma_{m}(s,d) \bigg\}\bigg] \bigg)\notag\\
&\geq1-\frac{4}{n}-\frac{4}{m}.
\end{align}
Recalling the definition of $\cR_{ij}$ in \eqref{eq:Rijdef}, we have
\begin{align*}
\cM_{\max} =&\sqrt{\frac{nm}{n+m}}\max_{1\leq j\leq p_3}\bigg|\frac{\bu_j^{\T}(\hat\bSigma_1-\hat\bSigma_2)\bu_j}{\bu_j^{\T}\bSigma_2\bu_j}\bigg|=\sqrt{\frac{n}{m(n+m)}}\max_{1\leq j\leq p_3} \bigg| \sum_{i=1}^{m}\cR_{ij} \bigg|,
\end{align*}
where $\cM_{\max}$ is as in the proof of Theorem~\ref{thm:testing}. Moreover, there exists a $p_3$-dimensional Gaussian random vector $(\cG_1,\ldots,\cG_{p_3})^{\T}$ satisfying
\begin{align*}
\E( \cG_j\cG_k )=\frac{n}{m(n+m)} \E\bigg( \sum_{i=1}^{m}\cR_{ij}\cR_{ik}\bigg), \ \ 1\leq j\leq k \leq p_3,
\end{align*}
such that for every $\delta>0$,
\begin{align*}
&\P\bigg(\bigg| \cM_{\max}  - \max_{1\leq j\leq p_3}|\cG_j|\bigg|\geq16\delta\bigg)\notag\\
\lesssim & \, \bigg(\frac{n+m}{n}\bigg)^3 L_3^3 \frac{\{ \log(2p_3\vee m)\}^2}{\delta^3\{  m(n+m)/ n \}^{1/2}}+\bigg(\frac{n+m}{n}\bigg)^4 K_0^4  L_3^4 \frac{ \{ \log(p_3+1)\}^4 \{ \log(2p_3\vee m)\}^3}{\delta^4 \{  m(n+m) / n \}}\notag\\
\lesssim & \, L_2^3 \frac{ \{ \log(2p_3\vee m)\}^2}{\delta^3m^{1/2}} + K_0^4L_2^4\frac{ \{ \log(p_3+1)\}^4 \{ \log(2p_3\vee m) \}^3}{\delta^4m},
\end{align*}
where $L_3:=(m/n+1)L_2$. It follows that
\begin{align}\label{eq:cqg6}
\P\bigg(\bigg|  \cM_{\max}  -   \max_{1\leq j\leq p_3}|\cG_j| \bigg|\geq16\delta\bigg)\lesssim  L_2^3 \frac{ \gamma^2_{m}(s,d)}{\delta^3 m^{1/2}}+ K_0^4L_2^4 \frac{ \gamma^7_{m}(s,d)}{\delta^4m}.
\end{align}
Taking $\delta=\frac{K_0^4 \, \gamma^{5/8}_{m}(s,d)}{L_2 \, m^{1/8}}$, it follows from \eqref{eq:cqg5} and \eqref{eq:cqg6} that
\begin{align*}
\P\left[ \bigg|\cQ_{\max} - \max_{1\leq j\leq p_3}|\cG_j| \bigg|\geq C_{32} K_0^4 \frac{ \gamma^{5/8}_{m}(s,d)}{L_2 \, m^{1/8}} \right] \lesssim L_2^2 \frac{ \gamma^{1/8}_{m}(s,d)}{m^{1/8}}+L_2^2 \frac{ \gamma^{9/2}_{m}(s,d)}{m^{1/2}}.
\end{align*}
This completes the proof.
\end{proof}

\subsubsection{Proof of Lemma \ref{lem:testinglem2}}
\begin{proof}
Write $\hat{\bSigma} =  (m\hat\bSigma_1+n\hat\bSigma_2) / (n + m)$. By definition, we have
\begin{align}\label{eq:cqq2}
&|\hat\cQ_{\max} \!-\! \cQ_{\max} |\notag \\
=& \, \bigg|\sup_{\bv\in\mathbb{V}(s,d)}\bigg|\frac{\sqrt{\frac{nm}{n+m}} \bv^{\T}(\hat\bSigma_1-\hat\bSigma_2)\bv}{\frac{nm}{n+m} \bv^{\T}(\frac{\hat\bSigma_1}{n}+\frac{\hat\bSigma_2}{m})\bv}\bigg|-\sup_{\bv\in\mathbb{V}(s,d)}\bigg|\frac{\sqrt{\frac{nm}{n+m}} \bv^{\T}(\hat\bSigma_1-\hat\bSigma_2)\bv}{\bv^{\T}\bSigma_2\bv}\bigg|\bigg|\notag\\
\leq& \, \sqrt{\frac{nm}{n+m}} \sup_{\bv\in\mathbb{V}(s,d)}\bigg|\frac{\bv^{\T}(\hat\bSigma_1-\hat\bSigma_2)\bv}{\bv^{\T} \hat{\bSigma} \bv}-\frac{\bv^{\T}(\hat\bSigma_1-\hat\bSigma_2)\bv}{\bv^{\T}\bSigma_2\bv}\bigg|\notag\\
\leq& \, \sqrt{\frac{nm}{n+m}} \sup_{\bv\in\mathbb{V}(s,d)}\bigg|\frac{\bv^{\T}(\hat\bSigma_1-\hat\bSigma_2)\bv}{\bv^{\T}\bSigma_2\bv}\bigg| \sup_{\bv\in\mathbb{V}(s,d)}\bigg|\frac{\bv^{\T}\bSigma_2\bv}{\bv^{\T}\hat{\bSigma} \bv}\bigg| \times\notag\\
&  \bigg\{ \frac{m}{n+m} \sup_{\bv\in\mathbb{V}(s,d)}\bigg|\frac{\bv^{\T}(\hat\bSigma_1-\bSigma_2)\bv}{\bv^{\T}\bSigma_2\bv}\bigg|+\frac{n}{n+m} \sup_{\bv\in\mathbb{V}(s,d)}\bigg|\frac{\bv^{\T}(\hat\bSigma_2-\bSigma_2)\bv}{\bv^{\T}\bSigma_2\bv}\bigg|\bigg\}.
\end{align}
Combining \eqref{eq:n1bound} and \eqref{eq:n2bound}, and taking $t_1=\log n$ and $t_2=\log m$, we have
\begin{align*}
\P\left[ \sup_{\bv\in\mathbb{V}(s,d)}\bigg|\frac{\bv^{\T}  \hat{\bSigma} \bv}{\bv^{\T}\bSigma_2\bv}-1\bigg|\leq C_{34} L_2 \bigg\{   \frac{\gamma^{1/2}_{m}(s,d)}{\sqrt{m}}+ \frac{\log m}{m} \gamma_{m}(s,d) \bigg\} \right] \geq 1-\frac{4}{n}-\frac{4}{m},
\end{align*}
where $C_{34} > 0$ is an absolute constant, $\gamma_{m}(s,d)=s\log\{ \gamma_s(\bSigma_2) \frac{ed}{s} \}\vee s\log m$, and $L_2=L_1^2+1$. It follows that, for all sufficiently large $m$,
\begin{align} 
\P\left( \sup_{\bv\in\mathbb{V}(s,d)}\bigg|\frac{\bv^{\T}\bSigma_2\bv}{\bv^{\T}\hat{\bSigma}\bv}\bigg|\leq\bigg[ 1-C_{34} L_2 \bigg\{  \frac{ \gamma_{m}^{1/2}(s,d)}{\sqrt{m}}+ \frac{\log m}{m} \gamma_{m}(s,d) \bigg\} \bigg]^{-1}\right) 
\geq1-\frac{4}{n}-\frac{4}{m}. \notag
\end{align}
This, together with \eqref{eq:cqg1}, \eqref{eq:n1bound}, \eqref{eq:n2bound}, and \eqref{eq:cqq2}, proves \eqref{eq:cqq1}.
\end{proof}

\subsubsection{Proof of Lemma \ref{lem:testinglem3}}
\begin{proof}
Define $\cB_{\max}  = \sup_{\bv\in\mathbb{V}(s,d)} | \cB_{\bv}|$, where
\begin{align*}
\cB_{ \bv} =\sqrt{\frac{nm}{n+m}} \, \frac{\bv^\T\{  n^{-1} \sum_{i=1}^{n}\xi_i(\bX_i\bX_i^\T-\bSigma_2)- m^{-1} \sum_{i=1}^{m} \eta_i(\bY_i\bY_i^\T-\bSigma_2)\}\bv}{\bv^{\T}\bSigma_2\bv} 
\end{align*}
and $\xi_1,\ldots,\xi_{n}, \eta_1 ,\ldots, \eta_m$ are independent standard Gaussian random variables that are independent of $\{\bX_i\}_{i=1}^{n}$ and $\{\bY_i\}_{i=1}^{m}$. As in Lemma \ref{lem:testinglem1}, we have for $1\leq j\leq k \leq p_3$,
\begin{align*}
\E ( \cG_j\cG_k ) = & \, \frac{n}{m(n+m)} \E\bigg( \sum_{i=1}^{n}\cR_{ij}\cR_{ik}+\sum_{i=n+1}^{m}\cR_{ij}\cR_{ik}\bigg)\notag\\
=& \, \frac{m}{n+m} \E\bigg\{ \frac{(\bu_j^{\T}\bX_i)^2(\bu_k^{\T}\bX_i)^2}{\|\bu_j\|_{\bSigma_2}^2\|\bu_k \|_{\bSigma_2}^2}-1\bigg\} +   \frac{n^2}{m(n+m)} \E\bigg\{ \frac{(\bu_j^{\T}\bY_i)^2(\bu_k^{\T}\bY_i)^2}{\|\bu_j\|_{\bSigma_2}^2\|\bu_k \|_{\bSigma_2}^2}-1\bigg\} \notag\\
&+\frac{n(m-n)}{m(n+m)} \E\bigg\{ \frac{(\bu_j^{\T}\bY_i)^2(\bu_k^{\T}\bY_i)^2}{\|\bu_j\|_{\bSigma_2}^2\|\bu_k \|_{\bSigma_2}^2}-1\bigg\} \notag\\
=& \, \frac{m}{n+m} \E\bigg\{ \frac{(\bu_j^{\T}\bX_i)^2(\bu_k^{\T}\bX_i)^2}{\|\bu_j\|_{\bSigma_2}^2\|\bu_k\|_{\bSigma_2}^2}-1\bigg\} +\frac{n}{n+m} \E\bigg\{ \frac{(\bu_j^{\T}\bY_i)^2(\bu_k^{\T}\bY_i)^2}{\|\bu_j\|_{\bSigma_2}^2\|\bu_k\|_{\bSigma_2}^2}-1\bigg\} .
\end{align*}
Putting $\{ \cB_j \}_{j=1}^{p_3} = \{ \cB_{\bv} \}_{\bv \in \mathbb{N}^0_{\epsilon_3}}$, we have, for $j=1,\ldots,p_3$,
\begin{align*}
\cB_{j}=\sqrt{\frac{nm}{n+m}} \bigg[\frac{1}{n}\sum_{i=1}^{n}\xi_i \bigg\{ \frac{(\bu_j^{\T}\bX_i)^2}{\|\bu_j\|_{\bSigma_2}^2}-1\bigg\} -\frac{1}{m}\sum_{i=1}^{m}\eta_i \bigg\{ \frac{(\bu_j^{\T}\bY_i)^2}{\|\bu_j\|_{\bSigma_2}^2}-1\bigg\} \bigg].
\end{align*}
It follows that
\begin{align*}
&\E (  \cB_{ j}\cB_k | \, \bX_1,\ldots,\bX_{n},\bY_1,\ldots,\bY_{m} ) \\
=&\frac{m}{n+m} \frac{1}{n}\sum_{i=1}^{n}\bigg\{ \frac{(\bu_j^{\T}\bX_i)^2}{\|\bu_j\|_{\bSigma_2}^2}\!-\!1\bigg\}\bigg\{ \frac{(\bu_k^{\T}\bX_i)^2}{\|\bu_k\|_{\bSigma_2}^2}\!-\!1\bigg\}  \\
& + \frac{n}{n+m} \frac{1}{m}\sum_{i=1}^{m}\bigg\{ \frac{(\bu_j^{\T}\bY_i)^2}{\|\bu_j\|_{\bSigma_2}^2}\!-\!1\bigg\}\bigg\{ \frac{(\bu_k^{\T}\bY_i)^2}{\|\bu_k\|_{\bSigma_2}^2}\!-\!1\bigg\}.
\end{align*}
Define
\begin{align*}
\Delta_{\cG} =\max_{1\leq j\leq k\leq p_3} | \E ( \cG_j\cG_k )  -\E ( \cB_{ j}\cB_{k} | \, \bX_1,\ldots,\bX_{n},\bY_1,\ldots,\bY_{m}) |.
\end{align*}
Similar to the proof of Theorem \ref{thm:multiplier_bootstrap}, it can be shown that with probability greater than $1-O(p_3^{-M})$,
\begin{align*}
 \Delta_{\cG}^{1/3} \{ \log(2p_3)\}^{1/3} \{ \log(1/\Delta_{\cG})\}^{1/3}\leq\Delta_{p_3,m}, 
\end{align*}
where $\Delta_{p_3,m} \to 0$ as $p_3, m \to \infty$. By Lemma \ref{lem:comparison}, we have
\begin{align*}
\sup_{t\geq0}\bigg|\P\bigg(  \max_{1\leq j\leq p_3 }|\cG_{j}|  \leq t \bigg)-\P\bigg( \max_{1\leq j\leq p_3 } |\cB_{j}|\leq t \, \bigg|  \bX_1,\ldots,\bX_{n},\bY_1,\ldots,\bY_{m} \bigg)\bigg|=o_{\mathbb{P}}(1).
\end{align*}
Then, using Lemma \ref{lem:discret} and Lemma \ref{lem:testinglem2}, we deduce that
\begin{align*}
\sup_{t\geq0}\bigg|\P\bigg(\max_{1\leq j\leq p_3}|\cG_j| \leq t \bigg)-\P(\hat\cB_{\max}\leq t \, |  \bX_1,\ldots,\bX_{n},\bY_1,\ldots,\bY_{m})\bigg|=o_{\mathbb{P}}(1),
\end{align*}
as desired.
\end{proof}

\subsection{Auxillary lemmas}

In the sequel, we define $\reals^+$ and $\mathbb{Z}^+$ to be the sets of positive real values and integers. The following two lemmas are elementary, yet very useful, in the proofs of the above results.

\begin{lemma}\label{lem:sigma norm}
For any $\bv\in\mathbb{S}^{d-1}$, we have
\begin{align*}
\|\bv_{\bSigma}\|_{\bSigma}=1~~~{\rm and}~~~\bigg(\frac{\bv}{\|\bv\|_2}\bigg)_{\bSigma}=\bv_{\bSigma}.
\end{align*}
\end{lemma}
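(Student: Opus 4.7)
The plan is to verify both identities directly from the definitions
\[
\|\bv\|_{\bSigma} = (\bv^\T \bSigma \bv)^{1/2}, \qquad \bv_{\bSigma} = \bv / \|\bv\|_{\bSigma},
\]
stated in Section~\ref{sec:notation}. Both claims are purely algebraic; no probabilistic tools or prior results of the paper are needed.

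For the first identity, I would compute $\|\bv_{\bSigma}\|_{\bSigma}^2$ by substituting the definition of $\bv_{\bSigma}$:
\[
\|\bv_{\bSigma}\|_{\bSigma}^2 \,=\, \bv_{\bSigma}^\T \bSigma \bv_{\bSigma} \,=\, \frac{\bv^\T \bSigma \bv}{\|\bv\|_{\bSigma}^2} \,=\, \frac{\|\bv\|_{\bSigma}^2}{\|\bv\|_{\bSigma}^2} \,=\, 1,
\]
so that $\|\bv_{\bSigma}\|_{\bSigma}=1$. Note that this does not actually require $\|\bv\|_2=1$; the identity holds for any nonzero $\bv \in \reals^d$ as long as $\bSigma$ is positive definite so that $\|\bv\|_{\bSigma}>0$.

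For the second identity, writing $\bw := \bv/\|\bv\|_2$, I would first observe that $\bSigma$-seminorms are positively homogeneous, giving
\[
\|\bw\|_{\bSigma} \,=\, \frac{\|\bv\|_{\bSigma}}{\|\bv\|_2}.
\]
Then by the definition of the $\bSigma$-normalization,
\[
\bw_{\bSigma} \,=\, \frac{\bw}{\|\bw\|_{\bSigma}} \,=\, \frac{\bv/\|\bv\|_2}{\|\bv\|_{\bSigma}/\|\bv\|_2} \,=\, \frac{\bv}{\|\bv\|_{\bSigma}} \,=\, \bv_{\bSigma},
\]
as required. Since $\bv \in \mathbb{S}^{d-1}$ already has $\|\bv\|_2=1$, the second identity is in fact the trivial statement that $\bv_{\bSigma} = \bv_{\bSigma}$; the real content is that the construction $\bv \mapsto \bv_{\bSigma}$ is scale-invariant, which is exactly what the above display shows. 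There is no substantive obstacle here — the only care required is to keep the two norms $\|\cdot\|_2$ and $\|\cdot\|_{\bSigma}$ clearly separated throughout.
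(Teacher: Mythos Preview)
Your proposal is correct and follows essentially the same approach as the paper: both verify the two identities by direct substitution of the definitions $\|\bv\|_{\bSigma}=(\bv^\T\bSigma\bv)^{1/2}$ and $\bv_{\bSigma}=\bv/\|\bv\|_{\bSigma}$. Your write-up is in fact slightly cleaner, and your remark that for $\bv\in\mathbb{S}^{d-1}$ the second identity is trivially $\bv_{\bSigma}=\bv_{\bSigma}$, with the real content being scale-invariance of $\bv\mapsto\bv_{\bSigma}$, is a correct and useful observation.
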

\begin{proof}
By definition, it is straightforward that $\|\bv_{\bSigma}\|_{\bSigma}=1$, and
\begin{align*}
\bigg(\frac{\bv}{\|\bv\|_2} \bigg)_{\bSigma}=\frac{\bv}{\bv^{\T}\bv}\bigg/\bigg\{ \frac{\bv^{\T}\bSigma\bv}{(\bv^{\T}\bv)^2} \bigg\}^{1/2}=\frac{\bv}{(\bv^{\T}\bSigma\bv)^{1/2}}=\bv_{\bSigma},
\end{align*}
as desired.
\end{proof}

\begin{lemma}\label{lem:orlicz norm}
For $\alpha\in [1,\infty)$, define the function $\psi_{\alpha}(x)=\exp(x^{\alpha})-1$, $x>0$. The Orlicz norm for a random variable $X$ is given by
\begin{align*}
\|X\|_{\psi_{\alpha}}:=\inf\bigg\{C>0:\E \bigg\{ \psi_{\alpha}\bigg(\frac{|X|}{C}\bigg) \bigg\} \leq1\bigg\}.
\end{align*}
Also, define the $L^p$ ($p \geq 1$) norm of a random variable $X$ by $ \|X\|_p = (  \E |X|^p   )^{1/p}$. Then, for every $p\in\mathbb{Z^+}$ we have
\begin{align*}
\|X\|_p\leq(p!)^{1/p}\cdot \|X\|_{\psi_1}.
\end{align*}
\end{lemma}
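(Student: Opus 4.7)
The plan is to extract the $p$-th moment bound directly from the series expansion of the exponential. The bound $\|X\|_p\leq (p!)^{1/p}\|X\|_{\psi_1}$ is a classical moment-Orlicz comparison, and the proof requires only one substantive observation: since every term of the Taylor series of $\exp(|X|/C)-1$ is nonnegative, individual terms are controlled by the whole sum.

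First I would fix an arbitrary $C>\|X\|_{\psi_1}$. By the defining infimum of the $\psi_1$-Orlicz norm, such a $C$ satisfies $\E\{\psi_1(|X|/C)\}\leq 1$, i.e., $\E\{\exp(|X|/C)\}\leq 2$. Expanding the exponential pointwise and applying Tonelli's theorem (all summands are nonnegative), we obtain
\[
\sum_{k=1}^{\infty}\frac{\E(|X|^k)}{k!\,C^k}=\E\{\exp(|X|/C)-1\}\leq 1.
\]
Since each summand on the left-hand side is nonnegative, the single term with index $k=p$ is bounded by the whole sum, yielding $\E(|X|^p)\leq p!\,C^p$. Taking the $p$-th root, $\|X\|_p\leq (p!)^{1/p}C$, and letting $C\downarrow \|X\|_{\psi_1}$ gives the desired bound.

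The edge cases deserve one sentence of care but pose no real obstacle. If $\|X\|_{\psi_1}=\infty$, the inequality is vacuous; if $\|X\|_{\psi_1}=0$, then $X=0$ almost surely and both sides vanish. Finite $\|X\|_{\psi_1}$ is the only interesting case, and there the monotone convergence theorem legitimizes the passage $C\downarrow \|X\|_{\psi_1}$, since $\E\{\exp(|X|/C)\}$ is a nonincreasing function of $C>0$. Overall, I do not anticipate a genuine difficulty: the argument is a two-line manipulation of the exponential series, and the main step is simply recognizing that nonnegativity of the series permits bounding any single moment by the tail.
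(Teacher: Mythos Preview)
Your proof is correct and follows essentially the same approach as the paper. The paper compresses your Taylor-series argument into the single pointwise inequality $x^p\leq p!\,\psi_1(x)$ for $x>0$ (which is exactly your observation that the $p$-th term of the exponential series is dominated by the full sum), then takes expectations with $x=|X|/C$; your version spells out the series expansion and is slightly more careful about the limiting step $C\downarrow\|X\|_{\psi_1}$, but the substance is identical.
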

\begin{proof}
Note that for every $p\in\mathbb{Z}^+$ and $x\in\reals^+$, $x^p\leq p!\cdot\psi_1(x)$. Then, we have for any $C>0$,
\begin{align*}
\E \bigg( \frac{|X|^p}{C^p} \bigg) \leq p!\cdot\E\bigg\{ \psi_1\bigg(\frac{|X|}{C}\bigg) \bigg\} .
\end{align*}
The conclusion thus follows immediately.
\end{proof}

The following lemma is from \cite{vershynin2010introduction}.

\begin{lemma}\label{lem:covering numbers}
Let $(\Omega,\rho)$ be a metric space. For every $\epsilon>0$, a subset $\mathbb{N}_{\epsilon}(\Omega)$ of $\Omega$ is called an $\epsilon$-net of $\Omega$ if for every $\omega\in\Omega$, there is some $\xi\in \mathbb{N}_{\epsilon}(\Omega)$ such that $\rho(\omega,\xi)\leq\epsilon$. The minimal cardinality of an $\epsilon$-net $\Omega$, if finite, is called the covering number of $\Omega$ at scale $\epsilon$, and is denoted by $N(\Omega,\rho,\epsilon)$. The unit sphere $\mathbb{S}^{d-1}$ equipped with the Euclidean metric satisfies that for every $0<\epsilon\leq 1$, $N(\mathbb{S}^{d-1},\rho,\epsilon)\leq(1+2/\epsilon)^d$.
\end{lemma}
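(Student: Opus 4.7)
The plan is to establish this covering number bound via a classical volumetric packing argument, which bypasses any explicit construction of a net. First I would pass to a maximal $\epsilon$-separated subset $\mathcal{N} \subseteq \mathbb{S}^{d-1}$, i.e.\ a set whose pairwise Euclidean distances all exceed $\epsilon$ and which cannot be enlarged while preserving this property. Such a maximal set exists by a standard greedy procedure, using compactness of $\mathbb{S}^{d-1}$.

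The core observation is the packing/covering duality: any maximal $\epsilon$-separated set is automatically an $\epsilon$-net. Indeed, if some $\omega \in \mathbb{S}^{d-1}$ satisfied $\rho(\omega, x) > \epsilon$ for every $x \in \mathcal{N}$, then we could adjoin $\omega$ to $\mathcal{N}$ while preserving $\epsilon$-separation, contradicting maximality. It therefore suffices to bound $|\mathcal{N}|$ from above.

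For the cardinality estimate I would use a volume comparison in $\mathbb{R}^d$. The open Euclidean balls $B(x, \epsilon/2)$ for $x \in \mathcal{N}$ are pairwise disjoint by the separation property, and each is contained in the enlarged ball $(1 + \epsilon/2) B_2^d$, where $B_2^d$ denotes the closed Euclidean unit ball: indeed, $\|x\|_2 = 1$ implies $\|y\|_2 \leq 1 + \epsilon/2$ for every $y \in B(x, \epsilon/2)$. Comparing Lebesgue volumes via the scaling relation $\mathrm{vol}(r B_2^d) = r^d \,\mathrm{vol}(B_2^d)$ yields
\begin{align*}
|\mathcal{N}| \cdot (\epsilon/2)^d \, \mathrm{vol}(B_2^d) \;\leq\; (1 + \epsilon/2)^d \, \mathrm{vol}(B_2^d),
\end{align*}
so $|\mathcal{N}| \leq (1 + 2/\epsilon)^d$, which gives the stated bound on $N(\mathbb{S}^{d-1},\rho,\epsilon)$.

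There is no substantive obstacle here, as this is a classical result. The only step deserving care is the inclusion $B(x, \epsilon/2) \subseteq (1 + \epsilon/2) B_2^d$: it relies on $\mathcal{N}$ lying on the sphere itself rather than merely inside the ball, which pins the enlargement factor at exactly $1 + \epsilon/2$ and is precisely what produces the clean $(1 + 2/\epsilon)^d$ constant. A sharper argument on the sphere (using spherical caps and surface area) can trim this to a bound of the form $(C/\epsilon)^{d-1}$, but the stated bound suffices for all uses of this lemma in the paper.
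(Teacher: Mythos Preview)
Your argument is correct and is in fact the standard volumetric packing proof of this result. The paper does not supply its own proof of this lemma; it simply cites \cite{vershynin2010introduction}, where exactly this maximal-separated-set plus volume-comparison argument appears, so your proposal matches the intended source.
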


The following anti-concentration lemma is Theorem 3 in \cite{chernozhukov2014comparison} and is used in the proofs of Theorems \ref{thm:limiting_dist} and \ref{thm:testing}.
\begin{lemma}\label{lem:anticoncentration}
Let $(X_1,\ldots,X_d)^{\T}$ be a centered Gaussian random vector in $\reals^d$ with $\sigma_j^2:=\E ( X_j^2 ) >0$ for all $1\leq j\leq d$. Define $\underline{\sigma}   =\min_{1\leq j\leq d}\sigma_j$, $\overline{\sigma} =\max_{1\leq j\leq d}\sigma_j$, and $a_d =\E\{ \max_{1\leq j\leq d} (X_j/\!\sigma_j) \} $.\\
(i) If $\underline{\sigma}=\overline{\sigma}=\sigma$, then for every $\epsilon>0$,
\begin{align*}
\sup_{x\in \reals}\P\bigg( \bigg|\max_{1\leq j\leq d}X_j-x \bigg|\leq\epsilon \bigg)\leq \frac{4\epsilon}{\sigma}(a_d+1).
\end{align*}
(ii) If $\underline{\sigma}<\overline{\sigma}$, then for every $\epsilon>0$,
\begin{align*}
\sup_{x\in \reals}\P\bigg( \bigg|\max_{1\leq j\leq d}X_j-x \bigg|\leq\epsilon \bigg)\leq C\epsilon \big\{ a_d+\sqrt{1\vee\log(\underline{\sigma}/\epsilon)} \big\} ,
\end{align*}
where $C>0$ is a constant depending only on $\underline{\sigma}$ and $\overline{\sigma}$.
\end{lemma}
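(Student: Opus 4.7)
My plan is to bound the Lévy concentration function of $Z := \max_{1\leq j\leq d} X_j$ by first reducing it to a pointwise density bound, then analyzing that density via a smoothing argument combined with Gaussian integration by parts. The starting observation is that
\[
\sup_x \P(|Z - x| \leq \epsilon) = \sup_x \{ F_Z(x+\epsilon) - F_Z(x-\epsilon) \} \leq 2\epsilon \sup_y f_Z(y)
\]
whenever $Z$ admits a density $f_Z$, so everything reduces to bounding $\sup_y f_Z(y)$. Since $\bx \mapsto \max_j x_j$ is not differentiable, I would pass to the soft-max $Z_\beta := \beta^{-1} \log \sum_{j=1}^d \exp(\beta X_j)$, which satisfies $Z \leq Z_\beta \leq Z + \beta^{-1} \log d$, is smooth with $\nabla Z_\beta = \bw_\beta(\bX)$ a probability vector (hence $\sigma$-Lipschitz in the equal-variance case), and whose density converges to that of $Z$ as $\beta \to \infty$. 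It then suffices to bound $\sup_y f_{Z_\beta}(y)$ uniformly in $\beta$.

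For Part (i), the main step is to combine Stein's identity
\[
\E[X_j G(\bX)] = \sum_{k=1}^d \sigma_{jk}\, \E[\partial_k G(\bX)],
\]
applied to a mollified indicator $G \approx \ind(Z_\beta \leq y+h)$, with the constraint $\|\nabla Z_\beta\|_1 = 1$ and the Gaussian concentration bound $\Var(Z_\beta) \leq \sigma^2$. Properly balancing the Lipschitz scale $\sigma$ against the typical location $\E Z = \sigma a_d$ would yield $\sup_y f_{Z_\beta}(y) \leq C (a_d+1)/\sigma$; letting $\beta \to \infty$ then gives Part (i). For Part (ii), I would partition the indices into dyadic blocks $B_k := \{j : \sigma_j \in [2^k \underline{\sigma}, 2^{k+1}\underline{\sigma})\}$, producing $K \lesssim \log(\overline{\sigma}/\underline{\sigma})$ groups, apply Part (i) within each block, and union-bound only over those blocks with $\sigma_j \gtrsim \epsilon$; the extra factor $\sqrt{1 \vee \log(\underline{\sigma}/\epsilon)}$ arises precisely from this truncation and from the need to convert between the common block scale and $\underline{\sigma}$.

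The hard part is the density estimate in the second step: extracting precisely the constant $(a_d+1)/\sigma$ from the Gaussian integration-by-parts argument on the mollified maximum. A naive route through Borell--TIS concentration of $Z$ alone gives the wrong scaling (it brings in an unwanted $\sqrt{\log d}$ rather than $a_d$). Overcoming this requires a variational or co-area argument that exploits the fact that $\nabla Z_\beta$ is concentrated on the near-maximizing coordinates, so that the effective ``surface measure'' of the level set $\{Z_\beta = y\}$ is governed by $a_d$ rather than $\sqrt{\log d}$. In Part (ii) the delicate point is to avoid a factor of $d$ from naive union bounding over coordinates, which is exactly what the dyadic partition into $O(\log(\overline{\sigma}/\underline{\sigma}))$ blocks accomplishes.
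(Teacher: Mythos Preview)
The paper does not prove this lemma at all. It is listed among the auxiliary lemmas with the sentence ``The following anti-concentration lemma is Theorem~3 in \cite{chernozhukov2014comparison} and is used in the proofs of Theorems~\ref{thm:limiting_dist} and~\ref{thm:testing},'' and no argument is given. So there is nothing in the present paper to compare your proposal against.

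For what it is worth, your outline for Part~(i) --- reduce to a density bound, smooth the maximum via the log-sum-exp, and apply Gaussian integration by parts --- is essentially the strategy used in the original source \cite{chernozhukov2014comparison}, and you have correctly identified the delicate point: one must extract $(a_d+1)/\sigma$ rather than the cruder $\sqrt{\log d}/\sigma$ that a naive Borell--TIS bound would give. Your plan for Part~(ii), however, diverges from the original. The dyadic block decomposition you propose would introduce a multiplicative factor of order $\log(\overline{\sigma}/\underline{\sigma})$ from the union bound over blocks, which does not appear in the stated inequality; the constant $C$ is allowed to depend on $\underline{\sigma}$ and $\overline{\sigma}$, so this may be absorbable, but the original proof in \cite{chernozhukov2014comparison} treats the heterogeneous-variance case directly by a sharper density argument rather than by reducing to the equal-variance case block by block. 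If you intend to supply a self-contained proof, the cleaner route is to follow that source directly.
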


The following lemma from \cite{law2007tail} is used in the proof of Lemma \ref{lem:connecting}.
\begin{lemma}\label{lem:tail_ineq}
Let $\bX_1,\ldots,\bX_n$ be independent random variables taking values in a measurable space $(\mathbb{S},\cB)$, and let $\cF$ be a countable class of measurable functions $f:\mathbb{S}\rightarrow \reals$. Assume that for $i=1,\ldots, n$, $\E f(\bX_i)=0$ for every $f\in\cF$ and $\|\sup_{f\in\cF}|f(\bX_i)|\|_{\psi_1}<\infty$. Define
\begin{align*}
Z =\sup_{f\in\cF} \bigg| \sum_{i=1}^n f(\bX_i) \bigg|~~{\rm and}~~\sigma^2 =\sup_{f\in\cF}\sum_{i=1}^n \E f^2(\bX_i).
\end{align*}
Then, for every $0<\eta<1$ and $\delta>0$, there exists a constant $C=C(\eta,\delta)$ such that for all $t\geq 0$,
\begin{align*}
& \P\{ Z\geq(1+\eta)\E Z+t \}  \\
 \leq& \exp\bigg\{ -\frac{t^2}{2(1+\delta)\sigma^2}\bigg\} + 3\exp\bigg\{ -\frac{t}{C\|\max_{1\leq i\leq n}\sup_{f\in\cF}|f(\bX_i)|\|_{\psi_1}}\bigg\} ,
\end{align*}
and
\begin{align*}
& \P\{ Z\leq(1-\eta)\E Z-t \} \\
  \leq& \exp\bigg\{ -\frac{t^2}{2(1+\delta)\sigma^2}\bigg\} +3\exp\bigg\{ -\frac{t}{C\|\max_{1\leq i\leq n}\sup_{f\in\cF}|f(\bX_i)|\|_{\psi_1}}\bigg\}.
\end{align*}
\end{lemma}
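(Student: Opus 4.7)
The plan is to recognize Lemma \ref{lem:tail_ineq} as a two-regime (Bernstein plus sub-exponential) Talagrand-type concentration inequality for suprema of empirical processes, of the kind first established by Adamczak \emph{et al.}\ for unbounded summands. The statement interpolates between a Gaussian tail controlled by $\sigma^{2}$ on small scales and a sub-exponential tail controlled by the Orlicz norm $\|\max_{i}\sup_{f}|f(\bX_{i})|\|_{\psi_{1}}$ on large scales, so the natural strategy is a truncation followed by separate treatments of the bounded part (Talagrand/Bousquet) and the tail part (moment bounds via the $\psi_{1}$ norm).

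First, set $U := \|\max_{1\leq i\leq n}\sup_{f\in\cF}|f(\bX_{i})|\|_{\psi_{1}}$ and pick a truncation level $M$ proportional to $U \log n$. Decompose
\[
f(\bX_{i}) = f(\bX_{i})\mathds{1}\{|f(\bX_{i})|\leq M\} + f(\bX_{i})\mathds{1}\{|f(\bX_{i})|>M\} =: f^{\leq}(\bX_{i}) + f^{>}(\bX_{i}),
\]
and denote the corresponding empirical suprema by $Z^{\leq}$ and $Z^{>}$. Because $|f^{\leq}|\leq M$ uniformly, Talagrand's (or Bousquet's) inequality applied to the class $\{f^{\leq}:f\in\cF\}$ yields a concentration bound of the exact desired shape: with probability at least $1-\exp(-t^{2}/\{2(1+\delta)\sigma^{2}\})-\exp(-t/(CM))$ one has $Z^{\leq}\leq (1+\eta)\E Z^{\leq}+t$. (A standard re-centering step recenters $f^{\leq}$ to mean zero; the induced bias is absorbed by $\eta\E Z$ via the argument below.)

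Second, handle $Z^{>}$ by brute force: the definition of the $\psi_{1}$ Orlicz norm gives, for any $x>0$,
\[
\P\bigl(\max_{i}\sup_{f}|f(\bX_{i})|>x\bigr)\leq 2\exp(-x/U).
\]
Thus choosing $M = C' U \log n$ for large enough $C'$ makes $\max_{i}\sup_{f}|f(\bX_{i})| \leq M$ with probability $1-n^{-2}$, on which event $Z^{>}=0$ and $Z=Z^{\leq}$. On the complementary event, crude integration of the $\psi_{1}$ tail bounds $\E Z^{>}$ and hence $\E Z$ in terms of $U$; the usual contraction/symmetrization trick $\E Z^{\leq}\leq \E Z + CU$ transfers the concentration from $Z^{\leq}$ to $Z$, converting the constant $1+\eta$ in front of $\E Z^{\leq}$ into $1+\eta$ in front of $\E Z$ after absorbing lower-order terms into the additive $t$.

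Finally, combine the two pieces: on the high-probability event $\{\max_{i}\sup_{f}|f(\bX_{i})|\leq M\}$ one inherits the Talagrand bound with parameter $M = CU\log n$; off that event one loses only an $O(n^{-2})$ probability, which is dwarfed by the second exponential term in the conclusion as soon as $t\gtrsim U\log n$ (and for smaller $t$ the Gaussian term dominates anyway). The main obstacle is bookkeeping: managing the interplay between the multiplicative constants $1\pm\eta$ in front of $\E Z$, the truncation bias, and the re-centering of $f^{\leq}$, in a way that absorbs all lower-order corrections into the stated $\exp(-t/\{CU\})$ term so that the final inequality has the clean form claimed. The lower-tail bound is proved by exactly the same argument applied to $-f$, using that the class $\cF$ is closed under its own negation in the supremum sense.
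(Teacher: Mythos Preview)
The paper does not prove Lemma~\ref{lem:tail_ineq}; it is quoted as an auxiliary result from Adamczak (2007, \emph{Electron.\ J.\ Probab.}) and used as a black box in the proof of Lemma~\ref{lem:connecting}. So there is no in-paper argument to compare your proposal against.

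That said, your outline is in the spirit of Adamczak's original proof---truncate, apply a bounded-summand Talagrand/Bousquet (or Klein--Rio) inequality to the truncated part, and control the overshoot via the $\psi_1$ norm of the envelope---but the quantitative matching does not close as you have written it. If you truncate at $M = C'U\log n$, the sub-exponential term produced by Talagrand's inequality is $\exp\{-t/(CM)\} = \exp\{-t/(CC'U\log n)\}$, which is off by a factor $\log n$ from the claimed $\exp\{-t/(CU)\}$. The leftover probability $O(n^{-2})$ from the event $\{\max_i\sup_f|f(\bX_i)|>M\}$ is likewise not dominated by $3\exp\{-t/(CU)\}$ for $t$ of order $U$, so your ``dwarfed by the second exponential term'' claim fails precisely in the regime where it is needed. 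To obtain the stated bound one lets the truncation level scale with $t$ (not with $n$), and controls the bias $\E Z - \E Z_\rho$ through a Hoffmann--J{\o}rgensen-type comparison rather than the crude $\E Z^{\leq}\leq \E Z + CU$; this is where the constant $3$ and the dependence $C = C(\eta,\delta)$ actually come from. Finally, your remark that the lower tail follows ``by applying the same argument to $-f$'' is not correct: $Z = \sup_{f}|\sum_i f(\bX_i)|$ already contains an absolute value, so replacing $f$ by $-f$ changes nothing. The lower-tail statement requires a separate lower-deviation inequality for the bounded part (the Klein--Rio form), which Adamczak invokes explicitly.
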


The following lemma is Lemma 2.2.2 in \cite{van1996weak} and is used in the proofs of Lemma \ref{lem:connecting} and Lemma \ref{lem:coupling}.
\begin{lemma}\label{lem:maximal_ineq}
For any $\alpha\in[1,\infty)$, there exists a constant $K_0>0$ depending only on $\alpha$ such that
\begin{align*}
\bigg\|\max_{1\leq i\leq n}X_i \bigg\|_{\psi_{\alpha}}\leq K_0  \, \psi_{\alpha}^{-1}(n)\max_{1\leq i\leq n}\|X_i\|_{\psi_{\alpha}}.
\end{align*}
\end{lemma}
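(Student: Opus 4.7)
The plan is to control the Orlicz norm of the maximum by a tail-integration argument rather than by the naive subadditivity bound $\psi_\alpha(\max_i |X_i|/C) \leq \sum_i \psi_\alpha(|X_i|/C)$, which will not give the right scaling in $n$. Writing $M := \max_{1\leq i\leq n}\|X_i\|_{\psi_\alpha}$, Markov's inequality applied to $\psi_\alpha$ gives $\P(|X_i|>tM)\leq 1/\psi_\alpha(t)$, and a union bound yields
\begin{align*}
\P\bigl(\max_i|X_i| > tM\bigr) \leq \min\bigl\{1,\, n/\psi_\alpha(t)\bigr\}, \quad t \geq 0.
\end{align*}
The goal then reduces to showing $\E \psi_\alpha(\max_i |X_i|/C) \leq 1$ for $C := K_0 M \psi_\alpha^{-1}(n)$ and an appropriate constant $K_0 = K_0(\alpha)$, since this is exactly the defining inequality of the Orlicz norm.

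The main calculation I would carry out is the layer-cake identity
\begin{align*}
\E\psi_\alpha\bigl(\max_i|X_i|/C\bigr) = \int_0^\infty \psi_\alpha'(Mt/C)\,(M/C)\,\P\bigl(\max_i|X_i|>tM\bigr)\,dt,
\end{align*}
obtained by the change of variables $s = tM/C$ in $\int_0^\infty \P(\psi_\alpha(\max_i|X_i|/C)>u)\,du$. Plugging in $\psi_\alpha'(s) = \alpha s^{\alpha-1}e^{s^\alpha}$ and writing $\lambda := (M/C)^\alpha = 1/\{K_0^\alpha \log(n+1)\}$, the integrand simplifies to $\alpha\lambda\, t^{\alpha-1}e^{\lambda t^\alpha}$ multiplied by the tail bound above. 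I would split the integral at the natural threshold $t^* := \psi_\alpha^{-1}(n) = (\log(n+1))^{1/\alpha}$, where the two pieces of the tail bound meet. On $[0,t^*]$ the integral evaluates in closed form to $e^{\lambda (t^*)^\alpha} - 1 = e^{1/K_0^\alpha} - 1$. On $[t^*,\infty)$, using $\psi_\alpha(t)\geq e^{t^\alpha}/2$ for $t\geq t^*$, the decay of $e^{-t^\alpha}$ dominates the growth of $e^{\lambda t^\alpha}$ whenever $\lambda<1$, and direct computation yields a bound of order $\lambda\, e^{1/K_0^\alpha}$.

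The main obstacle will be book-keeping the constants so that a single $K_0 = K_0(\alpha)$ works for all $n \geq 1$. Both contributions above are of order $K_0^{-\alpha}$ times an $O(1)$ factor, so taking $K_0$ sufficiently large (depending only on $\alpha$) drives the sum below $1$; the only care needed is to ensure $\lambda = 1/\{K_0^\alpha\log(n+1)\}$ is bounded away from $1$ uniformly in $n\geq 1$, which is automatic for $n\geq 2$ and handled by enlarging $K_0$ in the single case $n=1$. Once this is set up, the inequality $\E\psi_\alpha(\max_i|X_i|/C)\leq 1$ gives $\|\max_i X_i\|_{\psi_\alpha}\leq K_0\, \psi_\alpha^{-1}(n)\, M$, completing the proof.
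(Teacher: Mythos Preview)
Your proof is correct. The paper does not supply its own proof of this lemma; it simply cites Lemma~2.2.2 of van der Vaart and Wellner (1996) and uses the result as a black box. So there is no ``paper's proof'' to compare against, and your tail-integration argument stands on its own.

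A brief comparison with the cited source may still be useful. The proof in van der Vaart and Wellner exploits a submultiplicativity-type property of $\psi_\alpha$ (roughly, $\psi_\alpha(x)\psi_\alpha(y)\leq \psi_\alpha(cxy)$ for $x,y\geq 1$ and a suitable constant $c$) to pass from the naive bound $\E\psi_\alpha(\max_i|X_i|/C)\leq n$ to the desired conclusion via a rescaling that absorbs the factor $n$ into $\psi_\alpha^{-1}(n)$. Your approach instead bypasses this algebraic identity entirely: you convert the Orlicz norm condition into a tail bound by Markov's inequality and a union bound, and then integrate against $\psi_\alpha'$. The split at $t^* = \psi_\alpha^{-1}(n)$ is the natural one, and your computation of the two pieces is accurate; in particular $\lambda(t^*)^\alpha = 1/K_0^\alpha$ is exactly the cancellation that makes the argument work uniformly in $n$. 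Both routes are short, but yours is arguably more transparent because it makes explicit that the $\psi_\alpha^{-1}(n)$ factor is precisely the scale at which the union bound becomes nontrivial.

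One small remark: you silently pass from $\max_i X_i$ to $\max_i |X_i|$ in the proof. This is harmless since $|\max_i X_i|\leq \max_i |X_i|$ and the Orlicz norm is monotone, but it is worth a sentence.
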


The following lemma is Theorem A in \cite{mendelson2010empirical} and is used in the proof of Lemma \ref{lem:connecting}.
\begin{lemma}\label{lem:gc1}
Let $\cF$ be a class of mean-zero functions on a probability space $(\reals^d, \mu, \P)$, and let $\bX_1,\ldots,\bX_n$ be independent random variables in $\reals^d$ distributed according to $\P$. Then, there exists an absolute constant $C>0$ such that
\begin{align*}
\E\bigg\{ \sup_{f\in\cF} \bigg|\frac{1}{n}\sum_{i=1}^n f^2(\bX_i)-\E f^2 \bigg| \bigg\} \leq C \bigg\{  \sup_{f\in\cF}\|f\|_{\psi_1} \frac{\gamma_2(\cF,\psi_2)}{\sqrt{n}}+\frac{\gamma_2^2(\cF,\psi_2)}{n} \bigg\} .
\end{align*}
The complexity parameter $\gamma_2(\cF, \psi_2)$ of $\cF$ is the $\gamma_2$ functional with respect to the $\psi_2$ norm. See \cite{talagrand2014upper} for its definition and properties.
\end{lemma}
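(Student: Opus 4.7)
The plan is to derive Lemma \ref{lem:gc1} by combining symmetrization with a two-scale Bernstein-type generic chaining argument tailored to the quadratic empirical process $f\mapsto n^{-1}\sum_i f^2(\bX_i)-\E f^2$. This route, due in essence to Mendelson and collaborators, analyzes such ``squared'' or ``product'' processes by chaining on $\cF$ in the $\psi_2$-norm rather than chaining on $\{f^2:f\in\cF\}$ in the $\psi_1$-norm.

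First, I would symmetrize. By the standard symmetrization inequality applied to the centered summands $f^2(\bX_i)-\E f^2$,
\begin{align*}
\E \sup_{f\in\cF} \bigg|\frac{1}{n}\sum_{i=1}^n f^2(\bX_i) - \E f^2 \bigg|
\;\leq\; 2\, \E \sup_{f\in\cF} \bigg|\frac{1}{n}\sum_{i=1}^n \epsilon_i f^2(\bX_i) \bigg|,
\end{align*}
where $\{\epsilon_i\}$ are independent Rademacher signs independent of $\{\bX_i\}$. Set $Z_f := n^{-1}\sum_i \epsilon_i f^2(\bX_i)$. The crucial observation is the product representation
\begin{align*}
Z_f - Z_g \;=\; \frac{1}{n}\sum_{i=1}^n \epsilon_i (f-g)(\bX_i)\,(f+g)(\bX_i),
\end{align*}
which exposes the quadratic structure and brings the $\psi_2$-geometry of $\cF$ into play rather than merely the $\psi_1$-geometry of $\{f^2\}$.

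Second, I would establish a Bernstein-type tail for these increments. Each summand $\epsilon_i (f-g)(\bX_i)(f+g)(\bX_i)$ is centered with $\psi_1$-norm bounded by $\|(f-g)(f+g)\|_{\psi_1}\leq \|f-g\|_{\psi_2}\|f+g\|_{\psi_2}$, and its variance can be refined via Cauchy--Schwarz together with the moment bounds $\|f\|_{L^q}\lesssim \sqrt{q}\,\|f\|_{\psi_2}$ and $\|f\|_{L^q}\lesssim q\,\|f\|_{\psi_1}$ to the form $\mathrm{Var}\lesssim \|f-g\|_{\psi_2}^2\,(\sup_f\|f\|_{\psi_1})^2$. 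Bernstein's inequality for sub-exponential sums then yields
\begin{align*}
\P\bigl(|Z_f - Z_g|\ge t\bigr) \;\le\; 2\exp\!\Bigl(-c\,n\min\!\bigl(t^2/\sigma(f,g)^2,\, t/b(f,g)\bigr)\Bigr),
\end{align*}
with $\sigma(f,g)\lesssim \|f-g\|_{\psi_2}\sup_f\|f\|_{\psi_1}$ and $b(f,g)\lesssim \|f-g\|_{\psi_2}\sup_f\|f\|_{\psi_2}$, so that $\{Z_f\}$ has Bernstein-type increments w.r.t.\ a scaled $\psi_2$-pseudometric.

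Third, I would invoke Talagrand's two-regime generic chaining theorem for processes with mixed sub-Gaussian/sub-exponential increments (Theorem 2.2.23 of \cite{talagrand2014upper}), which produces
\begin{align*}
\E \sup_{f\in\cF}|Z_f - Z_{f_0}| \;\lesssim\; \frac{\sup_f\|f\|_{\psi_1}\,\gamma_2(\cF,\|\cdot\|_{\psi_2})}{\sqrt{n}} \;+\;\frac{\sup_f\|f\|_{\psi_2}\,\gamma_1(\cF,\|\cdot\|_{\psi_2})}{n}.
\end{align*}
The $1/\sqrt n$-term already matches what is claimed. The main obstacle and most delicate step is the $1/n$-term: one must convert $\sup_f\|f\|_{\psi_2}\cdot\gamma_1(\cF,\|\cdot\|_{\psi_2})$ into $\gamma_2^2(\cF,\|\cdot\|_{\psi_2})$. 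This is Mendelson's product-process trick: chain directly on the product pseudo-metric $(f,g)\mapsto \|f-g\|_{\psi_2}\|f+g\|_{\psi_2}$ and apply a Talagrand-type contraction inequality for $\gamma$-functionals of products, obtaining $\sup_f\|f\|_{\psi_2}\cdot\gamma_1(\cF,\|\cdot\|_{\psi_2})\lesssim\gamma_2^2(\cF,\|\cdot\|_{\psi_2})$. Assembling the two pieces gives exactly the bound stated in the lemma. The most technical point, and the place where the precise form of the bound is locked in, is this last absorption; naively $\gamma_1$ can be substantially larger than $\gamma_2^2/\mathrm{diam}$, and the product-process contraction is what saves a factor of the $\psi_2$-diameter.
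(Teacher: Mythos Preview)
The paper does not prove this lemma; it cites it as Theorem~A of \cite{mendelson2010empirical}. Your steps 1--4 correctly reconstruct the skeleton: symmetrize, exhibit the product increment $(f-g)(f+g)$, read off Bernstein tails with sub-Gaussian scale $\|f-g\|_{\psi_2}\sup_h\|h\|_{\psi_1}$ and sub-exponential scale $\|f-g\|_{\psi_2}\|f+g\|_{\psi_2}$, and chain. This correctly produces the $\sup_f\|f\|_{\psi_1}\,\gamma_2(\cF,\psi_2)/\sqrt n$ term.

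The gap is step 5. The inequality you display, $\sup_f\|f\|_{\psi_2}\cdot\gamma_1(\cF,\psi_2)\lesssim\gamma_2^2(\cF,\psi_2)$, is simply false, and no contraction principle delivers it. Take $\cF=\{f_1,\dots,f_N\}$ with $\|f_i\|_{\psi_2}=1$ and $\|f_i-f_j\|_{\psi_2}=\epsilon$ for $i\ne j$: then $\gamma_2\asymp\epsilon\sqrt{\log N}$, $\gamma_1\asymp\epsilon\log N$, so the left side is $\asymp\epsilon\log N$ while the right side is $\asymp\epsilon^2\log N$. The same example defeats the variant with the product pseudo-metric $b(f,g)=\|f-g\|_{\psi_2}\|f+g\|_{\psi_2}$, since $b(f_i,f_j)\asymp\epsilon$ there as well. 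The $\gamma_2^2/n$ contribution in Mendelson's argument does not arise by landing on a $\gamma_1$ and then converting; it comes from chaining by hand along a near-optimal admissible sequence for $\gamma_2(\cF,\psi_2)$ while keeping the \emph{squared} $\psi_2$-distance in the sub-exponential scale. Concretely, via $f^2-g^2=(f-g)^2+2g(f-g)$ at each link, the pure-square piece $(\Delta_k f)^2$ has $\psi_1$-scale $\|\Delta_k f\|_{\psi_2}^2$, and its sub-exponential chaining sum is
\[
\frac{1}{n}\sum_k 2^k\|\Delta_k f\|_{\psi_2}^2\;\le\;\frac{1}{n}\Bigl(\sum_k 2^{k/2}\|\Delta_k f\|_{\psi_2}\Bigr)^2\;\lesssim\;\frac{\gamma_2^2(\cF,\psi_2)}{n},
\]
which is where $\gamma_2^2$ genuinely enters. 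The cross pieces $\pi_{k-1}f\cdot\Delta_k f$ require a separate and more delicate treatment (and this is also where one effectively needs $0\in\cF$, or the symmetry present in the paper's class $\{\bv_{\bSigma}^{\T}\bX:\bv\in\mathbb{V}(s,d)\}$, to anchor the chain). Feeding the crude bound $b(f,g)\le\sup_h\|h\|_{\psi_2}\,\|f-g\|_{\psi_2}$ into Talagrand's two-regime black box, as you do, discards precisely this squared-distance structure, which is why you end up with $\sup_h\|h\|_{\psi_2}\,\gamma_1$ and cannot get back to $\gamma_2^2$.
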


The following two lemmas are Theorem 2.7.5 and Theorem 2.4.1 in \cite{talagrand2014upper} on generic chaining, and are used in the proof of Lemma \ref{lem:connecting}.
\begin{lemma}\label{lem:gc2}
If $f:(T,\rho)\mapsto(U, \varrho)$ is surjective and there exists a constant $C>0$ such that
\begin{align*}
\varrho(f(x),f(y))\leq C \rho(x,y),
\end{align*}
for any $x,y\in T$. Then, we have
\begin{align*}
\gamma_{\alpha}(U, \varrho)\leq C  K(\alpha) \, \gamma_{\alpha}(T,\rho),
\end{align*}
where $K(\alpha)$ is an absolute constant depending only on $\alpha$.
\end{lemma}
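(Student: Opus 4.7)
The plan is to prove the lemma by direct manipulation of the generic chaining functional through its definition in terms of admissible partition sequences. Recall that for a metric space $(T, \rho)$, an admissible sequence is a nested family $\{\cA_n\}_{n \geq 0}$ of partitions of $T$ with $|\cA_0| = 1$ and $|\cA_n| \leq 2^{2^n}$ for $n \geq 1$, and
\[
\gamma_{\alpha}(T, \rho) = \inf \sup_{t \in T} \sum_{n \geq 0} 2^{n/\alpha} \, \Delta_\rho(\cA_n(t)),
\]
where $\cA_n(t)$ denotes the unique block of $\cA_n$ containing $t$ and $\Delta_\rho$ is the $\rho$-diameter; the infimum is taken over all admissible sequences.

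First I would fix an admissible sequence $\{\cA_n\}_{n \geq 0}$ of $(T, \rho)$ that is nearly optimal and push it forward through $f$ to obtain a candidate family on $U$: set $\cB_n := \{ f(A) : A \in \cA_n \}$. Because $f$ is surjective, the sets in $\cB_n$ cover $U$, and clearly $|\cB_n| \leq |\cA_n| \leq 2^{2^n}$. After a routine refinement — e.g., using a fixed selection rule to reassign overlapping points to a single block — one converts $\{\cB_n\}_{n \geq 0}$ into an admissible \emph{partition} sequence of $(U, \varrho)$ whose cardinality bounds are preserved. The $\alpha$-dependent constant $K(\alpha)$ in the stated bound is precisely the slack needed to absorb this refinement and the equivalent reformulations of $\gamma_\alpha$ used in Talagrand's book.

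Next, for each $u \in U$ choose $t_u \in f^{-1}(u)$, which is nonempty by surjectivity. Before refinement, the block of $\cB_n$ containing $u$ is exactly $f(\cA_n(t_u))$, and after the refinement it is a subset thereof, so in either case
\[
\Delta_\varrho(\cB_n(u)) \leq \Delta_\varrho\bigl(f(\cA_n(t_u))\bigr) \leq C \, \Delta_\rho(\cA_n(t_u)),
\]
by the $C$-Lipschitz hypothesis $\varrho(f(x), f(y)) \leq C \rho(x, y)$. Multiplying by $2^{n/\alpha}$, summing over $n \geq 0$, and using $\sup_{u \in U}(\cdot) \leq \sup_{t \in T}(\cdot)$ via the map $t \mapsto f(t)$, I obtain
\[
\sup_{u \in U} \sum_{n \geq 0} 2^{n/\alpha} \Delta_\varrho(\cB_n(u)) \leq C \sup_{t \in T} \sum_{n \geq 0} 2^{n/\alpha} \Delta_\rho(\cA_n(t)).
\]
Taking infimum over admissible sequences on both sides yields $\gamma_\alpha(U, \varrho) \leq C \, K(\alpha) \, \gamma_\alpha(T, \rho)$. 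The main technical obstacle is the bookkeeping in the refinement step: one must honestly turn the pushed-forward cover into an admissible partition sequence of $U$ while preserving both the cardinality constraint $2^{2^n}$ and nesting, and must check that the diameter bound on $\cB_n(u)$ survives the refinement. This is exactly where $K(\alpha)$ is absorbed, and beyond this point the argument is purely bookkeeping.
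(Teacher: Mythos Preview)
The paper does not give its own proof of this lemma; it is simply quoted as Theorem~2.7.5 from Talagrand's book and used as a black box. So there is no ``paper's proof'' to compare your attempt against.

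That said, your argument is the right one and is essentially how the result is proved. One remark: the refinement step is in fact cleaner than you make it sound, and does not itself require any slack constant. Fix the selection $u\mapsto t_u\in f^{-1}(u)$ once and for all, and define
\[
\cB_n(u):=\{u'\in U:\cA_n(t_{u'})=\cA_n(t_u)\}.
\]
This is automatically a partition of $U$ with $|\cB_n|\le|\cA_n|\le 2^{2^n}$; nesting is inherited because $\cA_{n+1}(t_{u'})=\cA_{n+1}(t_u)$ forces $\cA_n(t_{u'})=\cA_n(t_u)$; and $\cB_n(u)\subseteq f(\cA_n(t_u))$, so the diameter bound $\Delta_\varrho(\cB_n(u))\le C\,\Delta_\rho(\cA_n(t_u))$ holds exactly. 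Running your inequality chain then gives $\gamma_\alpha(U,\varrho)\le C\,\gamma_\alpha(T,\rho)$ with \emph{no} extra constant from the refinement. The factor $K(\alpha)$ in Talagrand's statement is there to accommodate the several equivalent definitions of $\gamma_\alpha$ (via partitions, via sequences of finite subsets, etc.), not to patch the push-forward construction. So your instinct to attribute $K(\alpha)$ to the refinement bookkeeping is slightly off, but the overall proof is sound.
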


\begin{lemma}\label{lem:gc3}
For any metric space $(T,\rho)$ and centered Gaussian process $\{X_t\}_{t\in T}$, there exist universal constants $C>c>0$ such that
\begin{align*}
c \, \gamma_2(T,\rho)\leq \E\bigg( \sup_{t\in T} X_t \bigg) \leq C\,\gamma_2(T,\rho).
\end{align*}
\end{lemma}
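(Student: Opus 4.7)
The plan is to establish the two directions of the equivalence separately. Write $\rho$ for the canonical metric $\rho(s,t) = \{\E(X_s-X_t)^2\}^{1/2}$, and assume (by monotone approximation) that $T$ is finite so that $\sup_{t \in T} X_t$ is an honest random variable.

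For the upper bound $\E(\sup_{t\in T}X_t) \leq C\gamma_2(T,\rho)$, I would carry out the classical generic chaining argument. Fix an admissible sequence $(T_n)_{n\geq 0}$, with $|T_0|=1$ and $|T_n|\leq 2^{2^n}$, that nearly attains the infimum defining $\gamma_2(T,\rho)$. For each $t\in T$ let $\pi_n(t)$ denote a closest point in $T_n$ to $t$, and write the telescoping identity
\[
X_t - X_{\pi_0(t)} = \sum_{n\geq 1}\{X_{\pi_n(t)} - X_{\pi_{n-1}(t)}\}.
\]
Each increment $X_{\pi_n(t)}-X_{\pi_{n-1}(t)}$ is Gaussian with standard deviation at most $\rho(t,\pi_n(t))+\rho(t,\pi_{n-1}(t))$. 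A union bound over all pairs in $T_n\times T_{n-1}$ (cardinality at most $2^{2^{n+1}}$) with threshold proportional to $2^{n/2}\rho(\pi_n(t),\pi_{n-1}(t))$ yields, after summing over $n$ and integrating the Gaussian tail, the bound $\E(\sup_t X_t) \lesssim \sup_t\sum_{n\geq 0} 2^{n/2}\rho(t,T_n)$; by definition the right-hand side is controlled by $\gamma_2(T,\rho)$ up to an absolute constant.

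For the lower bound $c\,\gamma_2(T,\rho)\leq\E(\sup_t X_t)$, I would invoke Talagrand's majorizing measure theorem. The key driver is Sudakov minoration: if $s_1,\ldots,s_N$ are $\delta$-separated points in $T$, then $\E\max_j X_{s_j}\geq c\delta\sqrt{\log N}$. The admissible sequence is not chosen a priori but is constructed directly from the process by induction on the scale $n$. At stage $n$, each existing cell is subpartitioned by selecting a maximal $\delta_n$-separated subset (with $\delta_n$ tuned to enforce a \emph{growth condition} that guarantees each of the $2^{2^n}$ new cells inherits a local copy of the expected supremum large enough to pay for its $2^{n/2}$ weight in $\gamma_2$). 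Working backwards from the leaves of the resulting tree shows that the $\gamma_2$-mass of each branch is dominated by the expected supremum of the process restricted to that branch, and the branches can be amalgamated via the Gaussian comparison/independence structure to bound the full $\gamma_2(T,\rho)$.

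The main obstacle is the lower bound. The upper bound reduces to Gaussian tail estimates, union bounds, and a geometric series argument, all of which are routine. In contrast, the lower bound hinges on the deep majorizing measure theorem: the partitioning construction must be inductive in scale, and at each scale one must balance Sudakov-based local oscillation against the combinatorial size of the partition so that local progress on $\E\sup X_t$ is not spent wastefully. Because this argument is executed in full in Theorem 2.4.1 of \cite{talagrand2014upper}, the cleanest route in the present context is to cite that result directly; reproducing it here would require introducing Talagrand's full functional apparatus.
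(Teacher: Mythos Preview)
Your proposal is correct and aligns with the paper's treatment: the paper does not give its own proof of this lemma but simply cites it as Theorem~2.4.1 in \cite{talagrand2014upper}. Your sketch of the generic chaining upper bound and the majorizing measure lower bound is accurate, and your conclusion to cite Talagrand directly is exactly what the paper does.
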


The following two lemmas are Lemma 1 and Lemma 9 in \cite{chernozhukov2014gaussian} and are used in the proof of Lemma \ref{lem:coupling}.
\begin{lemma}\label{lem:cher1}
Let $\bX_1,\ldots,\bX_n$ be independent centered random vectors in $\reals^d$ with $d\geq2$. Then, there exists a absolute constant $C>0$ such that
\begin{align*}
&\E\bigg[\max_{1\leq j, k\leq d}\bigg|\frac{1}{n}\sum_{i=1}^n \{ X_{ij}X_{ik}\!-\!\E( X_{ij}X_{ik}) \} \bigg| \bigg]\\
\leq& C\bigg[ \sqrt{\frac{\log d}{n}} \max_{1\leq j\leq d} \bigg\{ \frac{1}{n}\sum_{i=1}^n\E(X_{ij}^4)\bigg\}^{1/2}\!+\!\frac{\log d}{n}  \bigg\{ \E \bigg( \max_{1\leq i\leq n}\max_{1\leq j\leq d}X_{ij}^4 \bigg) \bigg\}^{1/2}\bigg].
\end{align*}
\end{lemma}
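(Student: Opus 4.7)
The plan is to recognize the left-hand side as the expected maximum of $d^2$ centered sums $Z_{jk} := \sum_{i=1}^n \xi_{i,jk}$ with $\xi_{i,jk} := X_{ij}X_{ik}-\E(X_{ij}X_{ik})$, and to apply a Bernstein-type maximal inequality with a random envelope. The preliminary step is to compute the two inputs such a bound requires. By Cauchy--Schwarz, the variance proxy satisfies
\[
\max_{1\leq j,k\leq d}\sum_{i=1}^n \E(\xi_{i,jk}^2) \leq \max_{1\leq j,k\leq d}\sum_{i=1}^n\E(X_{ij}^2X_{ik}^2) \leq \max_{1\leq j\leq d}\sum_{i=1}^n \E(X_{ij}^4),
\]
since each cross-term $(\sum_i\E X_{ij}^4)^{1/2}(\sum_i\E X_{ik}^4)^{1/2}$ is dominated by the larger of its two factors; the pointwise envelope is $\max_{i,j,k}|\xi_{i,jk}|\leq 2\max_{i,j}X_{ij}^2$, whose $L^2$-norm equals $2\{\E\max_{i,j}X_{ij}^4\}^{1/2}$.

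Next I would invoke a Bernstein-type maximal inequality that accommodates a random envelope, e.g.\ the one derived from Adamczak's concentration inequality for sums of unbounded independent variables, which yields
\[
\E\max_{j,k}\bigg|\sum_{i=1}^n \xi_{i,jk}\bigg| \lesssim \sqrt{V\log(d^2)} + \log(d^2)\cdot\big\|\max_{i,j,k}|\xi_{i,jk}|\big\|_{L^2},
\]
where $V$ is the variance proxy above. Substituting the two estimates from the first step and dividing by $n$ produces exactly the claimed bound. Should one prefer to avoid Adamczak's inequality, the same conclusion is reachable via Rademacher symmetrization followed by Lemma~\ref{lem:maximal_ineq}, which conditionally on $\{\bX_i\}$ reduces the problem to $C\sqrt{\log d}\cdot\E\{\max_j\sum_i X_{ij}^4\}^{1/2}$ after the AM--GM reduction $X_{ij}^2X_{ik}^2 \leq (X_{ij}^4+X_{ik}^4)/2$; Jensen's inequality and a Hoffmann--J\o rgensen splitting of $\E\max_j\sum_i X_{ij}^4$ into $\max_j\sum_i\E X_{ij}^4$ plus a fluctuation term of order $\log d\cdot\E\max_{i,j}X_{ij}^4$ would then close the argument.

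The main obstacle is handling the \emph{random} envelope: classical Bernstein assumes an a.s.\ uniform bound, so the $L^2$-envelope variant requires the sharper Talagrand/Adamczak concentration inequality, or equivalently a careful truncation at a threshold calibrated to $\|\max_{i,j}X_{ij}^2\|_{L^2}/\log d$ so that the truncated-body Bernstein term and the truncated-tail contribution balance exactly. Ensuring the logarithmic prefactor is $\log d$ rather than $(\log d)^{3/2}$ or higher demands this precise calibration; once it is in hand, the remainder is routine bookkeeping.
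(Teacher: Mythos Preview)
The paper does not prove this lemma at all; it is quoted verbatim as Lemma~1 of \cite{chernozhukov2014gaussian} and used as a black box. So there is no ``paper's own proof'' to compare against beyond the original CCK argument, which proceeds exactly as you outline: symmetrize, bound the conditional sub-Gaussian maximum by $\sqrt{\log d}\,(\max_{j,k}\sum_i X_{ij}^2X_{ik}^2)^{1/2}$, reduce the random variance via $X_{ij}^2X_{ik}^2\le \tfrac12(X_{ij}^4+X_{ik}^4)$, and then control $\E\max_j\sum_i X_{ij}^4$ by the nonnegative-summand maximal inequality (your Lemma~\ref{lem:cher2}/CCK's Lemma~9). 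Your sketch is therefore essentially the CCK proof, and the Adamczak route you mention is an equivalent packaging of the same two ingredients.

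One small correction: the claimed \emph{pointwise} envelope $\max_{i,j,k}|\xi_{i,jk}|\le 2\max_{i,j}X_{ij}^2$ is false as stated, because the centering term $\E(X_{ij}X_{ik})$ is deterministic and need not be dominated almost surely by the random maximum (take a realization where all $X_{ij}$ are near zero). What you actually need---and what is true---is the $L^2$ bound
\[
\big\|\max_{i,j,k}|\xi_{i,jk}|\big\|_{L^2}^2 \le 2\,\E\max_{i,j}X_{ij}^4 + 2\max_{i,j}\E X_{ij}^4 \le 4\,\E\max_{i,j}X_{ij}^4,
\]
the last step using $\max_{i,j}\E X_{ij}^4 \le \E\max_{i,j}X_{ij}^4$. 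With this fix the rest of your argument goes through unchanged.
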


\begin{lemma}\label{lem:cher2}
Let $\bX_1,\ldots,\bX_n$ be independent random vectors in $\reals^d$ with $d\geq2$ such that $X_{ij}\geq0$ for all $1\leq i\leq n$ and $1\leq j\leq d$. Then
\begin{align*}
\E\bigg( \max_{1\leq j\leq d}\sum_{i=1}^n X_{ij} \bigg) \lesssim\max_{1\leq j\leq d} \sum_{i=1}^n \E( X_{ij} ) + (\log d) \cdot \E\bigg( \max_{1\leq i\leq n}\max_{1\leq j\leq d}X_{ij} \bigg).
\end{align*}
\end{lemma}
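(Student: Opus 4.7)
The plan is to prove this maximal inequality by combining symmetrization with a conditional sub-Gaussian bound on the resulting Rademacher averages, and then closing the estimate via a self-absorbing AM--GM step. Write $Z := \max_j \sum_i X_{ij}$ and $M := \max_{i,j} X_{ij}$. Because $\max_j(A_j+B_j) \le \max_j A_j + \max_j |B_j|$, we have $Z \le \max_j \sum_i \E X_{ij} + \max_j |\sum_i (X_{ij}-\E X_{ij})|$. Taking expectations and applying the standard desymmetrization inequality for independent (not necessarily i.i.d.) random vectors yields
\begin{equation*}
\E Z \le \max_j \sum_i \E X_{ij} + 2\,\E\max_j\Bigl|\sum_i \epsilon_i X_{ij}\Bigr|,
\end{equation*}
where $(\epsilon_i)_{i=1}^n$ are i.i.d.\ Rademacher signs independent of $(\bX_i)$.

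Next, I would condition on $(\bX_i)$: each $\sum_i \epsilon_i X_{ij}$ is sub-Gaussian with variance proxy $\sum_i X_{ij}^2$, so a standard $\psi_2$-maximal inequality over the $d$ coordinates (Lemma~\ref{lem:maximal_ineq} with $\alpha=2$) delivers
\begin{equation*}
\E_\epsilon \max_j \Bigl|\sum_i \epsilon_i X_{ij}\Bigr| \lesssim \sqrt{\log d}\cdot\sqrt{\max_j \sum_i X_{ij}^2}.
\end{equation*}
The critical step---and the place where nonnegativity of the $X_{ij}$ enters---is the elementary bound $\sum_i X_{ij}^2 \le (\max_i X_{ij})\sum_i X_{ij} \le M\sum_i X_{ij}$, which after maximizing over $j$ gives $\max_j \sum_i X_{ij}^2 \le M\,Z$. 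Taking the full expectation and applying Cauchy--Schwarz to $\sqrt{M}\cdot\sqrt{Z}$ then produces
\begin{equation*}
\E \max_j\Bigl|\sum_i \epsilon_i X_{ij}\Bigr| \lesssim \sqrt{(\log d)\,\E M\cdot \E Z}.
\end{equation*}

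Substituting back into the symmetrized inequality gives $\E Z \le \max_j\sum_i \E X_{ij} + C\sqrt{(\log d)\,\E M\cdot \E Z}$ for some absolute constant $C$. The final step is AM--GM with $a=(\log d)\,\E M$ and $b=\E Z$: from $C\sqrt{ab}\le \tfrac12 b+\tfrac{C^2}{2}a$ we absorb $\tfrac12 \E Z$ onto the left-hand side, yielding $\E Z \lesssim \max_j \sum_i \E X_{ij} + (\log d)\,\E M$. The main subtlety is this self-absorbing closure, which presupposes a priori finiteness of $\E Z$. This is ensured under the implicit finiteness hypotheses on the right-hand side via the crude bound $Z \le \sum_{i,j} X_{ij}$, or, more rigorously, by first running the whole argument on the truncated variables $X_{ij}\wedge K$ (for which $\E Z_K <\infty$ is automatic) and passing $K\to\infty$ via monotone convergence.
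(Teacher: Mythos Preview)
Your argument is correct. The paper does not actually prove this lemma: it is stated as an auxiliary result quoted verbatim as Lemma~9 of \cite{chernozhukov2014gaussian}, so there is no in-paper proof to compare against. Your proof is in fact the standard one (and essentially the argument given in the cited reference): symmetrize, use Hoeffding's conditional sub-Gaussian bound for the Rademacher sum together with the $\psi_2$-maximal inequality, exploit nonnegativity via $X_{ij}^2 \le M\,X_{ij}$ to get $\max_j\sum_i X_{ij}^2 \le M Z$, apply Cauchy--Schwarz to $\E[\sqrt{M}\sqrt{Z}]$, and close by absorbing $\tfrac12\E Z$. One terminological slip: the inequality you invoke is the \emph{symmetrization} inequality (passing from $\sum_i(X_{ij}-\E X_{ij})$ to $\sum_i \epsilon_i X_{ij}$), not desymmetrization; the mathematics is unaffected. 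Your remark about handling the a priori finiteness of $\E Z$ by truncation and monotone convergence is the right way to make the self-absorption rigorous.
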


The following lemma is Theorem 2 in  \cite{chernozhukov2014comparison} and is used in the proofs of Theorem \ref{thm:multiplier_bootstrap} and Lemma \ref{lem:testinglem3}.
\begin{lemma}\label{lem:comparison}
Let $\bX=(X_1,\ldots,X_d)^{\T}$ and $\bY=(Y_1,\ldots,Y_d)^{\T}$ be centered Gaussian random vectors in $\reals^d$ with covariance matrices $\bSigma^{\bX}=(\sigma_{jk}^{\bX})_{1\leq j,k\leq d}$ and $\bSigma^{\bY}=(\sigma_{jk}^{\bY})_{1\leq j,k\leq d}$, respectively. Suppose that $d\geq2$ and $\sigma_{jj}^{\bY}>0$ for all $1\leq j\leq d$. Define
\begin{align*}
a_d =\E\bigg\{ \max_{1\leq j\leq d}(Y_j/\sigma_{jj}^{\bY}) \bigg\} ~ \mbox{ and }~ \Delta =\max_{1\leq j,k\leq d}|\sigma_{jk}^{\bX}-\sigma_{jk}^{\bY}|.
\end{align*}
Then
\begin{align*}
\sup_{x\in\reals}\bigg|\P\bigg(\max_{1\leq j\leq d}X_j\leq x\bigg)-\P\bigg(\max_{1\leq j\leq d}Y_j\leq x \bigg) \bigg|\leq C\Delta^{1/3}(\log d)^{1/3} \{1\vee a_d^2\vee\log(1/\Delta) \}^{1/3},
\end{align*}
where $C>0$ is an absolute constant depending only on $\min_{1\leq j\leq d}\sigma_{jj}^{\bY}$ and $\max_{1\leq j\leq d}\sigma_{jj}^{\bY}$. In particular, we have $a_d\leq(2\log d)^{1/2}$ and
\begin{align*}
\sup_{x\in\reals}\bigg|\P\bigg(\max_{1\leq j\leq d}X_j\leq x \bigg)-\P\bigg(\max_{1\leq j\leq d}Y_j\leq x \bigg)\bigg|\leq C'\Delta^{1/3} \{ 1\vee\log(d/\Delta) \}^{2/3},
\end{align*}
where $C'>0$ is an absolute constant depending only on $\min_{1\leq j\leq d}\sigma_{jj}^{\bY}$ and $\max_{1\leq j\leq d}\sigma_{jj}^{\bY}$.
\end{lemma}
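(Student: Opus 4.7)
The plan is to establish this Gaussian comparison inequality via the three-step smoothing-and-interpolation argument that is now standard in the CCK framework.

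First, I would replace the non-smooth functional $M_{\bX}:=\max_j X_j$ by its log-sum-exp (``soft-max'') approximation $F_\beta(\bx):=\beta^{-1}\log\sum_{j=1}^d e^{\beta x_j}$, which satisfies $0\leq F_\beta(\bx)-\max_j x_j\leq\beta^{-1}\log d$. To compare the two distribution functions, I would further smooth the step function $\mathbf{1}\{\cdot\leq t\}$ by a thrice continuously-differentiable cutoff $g_{t,\delta}$ equal to $1$ on $(-\infty,t-\delta]$ and $0$ on $[t,\infty)$, with $\|g_{t,\delta}^{(k)}\|_\infty\lesssim\delta^{-k}$ for $k=1,2,3$. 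Setting $h:=g_{t,\delta}\circ F_\beta$, the target probability difference will be reduced to $\E h(\bX)-\E h(\bY)$, with the two ``sandwich'' errors (from max--to--softmax and from indicator--to--cutoff) absorbed by an anti-concentration estimate for $M_{\bY}$ via Lemma~\ref{lem:anticoncentration}.

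Next, I would run a Slepian-type Gaussian interpolation along $\bZ(s):=\sqrt{s}\,\bX+\sqrt{1-s}\,\bY$ and compute
\[
\E h(\bX)-\E h(\bY)=\int_0^1\frac{d}{ds}\E h(\bZ(s))\,ds=\frac12\sum_{j,k}\bigl(\sigma_{jk}^{\bX}-\sigma_{jk}^{\bY}\bigr)\int_0^1\E\,\partial_j\partial_k h(\bZ(s))\,ds,
\]
where the second identity follows from componentwise Gaussian integration by parts (Stein's identity). Thus the comparison is controlled by $\tfrac12\Delta\cdot\sup_s\E\sum_{j,k}|\partial_j\partial_k h(\bZ(s))|$. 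Writing $\partial_j\partial_k h=g''(F_\beta)\pi_j\pi_k+g'(F_\beta)\partial_j\pi_k$ with $\pi_j(\bx):=e^{\beta x_j}/\sum_\ell e^{\beta x_\ell}$, and using $\sum_j\pi_j=1$ together with $\partial_j\pi_k=\beta\pi_k(\delta_{jk}-\pi_j)$ which yields $\sum_{j,k}|\partial_j\pi_k|\leq 2\beta$, I obtain
\[
\sum_{j,k}|\partial_j\partial_k h|\leq \|g''\|_\infty+2\beta\|g'\|_\infty \lesssim \delta^{-2}+\beta\delta^{-1}.
\]

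Combining with the sandwich bound gives
\[
\bigl|\P(M_{\bX}\leq t)-\P(M_{\bY}\leq t)\bigr|\lesssim \Delta(\delta^{-2}+\beta\delta^{-1})+\P\bigl(M_{\bY}\in[t-\delta-\beta^{-1}\log d,\,t+\delta]\bigr),
\]
and the last term is bounded by a constant times $(\delta+\beta^{-1}\log d)\bigl(a_d+\sqrt{1\vee\log(1/\delta)}\bigr)$ via Lemma~\ref{lem:anticoncentration}. Choosing $\beta\asymp\delta^{-1}\log d$ collapses $\beta\delta^{-1}$ into $\delta^{-2}\log d$ and makes the anti-concentration window of order $\delta$; then optimizing $\delta$ to balance $\Delta\delta^{-2}\log d$ against $\delta\cdot(a_d\vee\sqrt{\log(1/\Delta)})$ gives $\delta\asymp\{\Delta\log d/(a_d^2\vee\log(1/\Delta))^{1/2}\}^{1/3}$ and, after routine algebra, the stated rate $\Delta^{1/3}(\log d)^{1/3}\{1\vee a_d^2\vee\log(1/\Delta)\}^{1/3}$. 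The ``in particular'' form follows from the Gaussian maximal bound $a_d\leq\sqrt{2\log d}$.

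The main obstacle is the three-way parameter balancing: three independent error sources---the $\beta^{-1}\log d$ soft-max bias, the Stein term $\Delta(\delta^{-2}+\beta\delta^{-1})$, and the anti-concentration term $(\delta+\beta^{-1}\log d)(a_d+\sqrt{\log(1/\delta)})$---must be made simultaneously optimal in order to produce the characteristic $\Delta^{1/3}$ rate rather than a naive $\Delta^{1/2}$; the mild circularity arising from $\log(1/\delta)$ is then resolved by a standard monotonicity/fixed-point argument, and the dependence of the constant only on $\min_j\sigma_{jj}^{\bY}$ and $\max_j\sigma_{jj}^{\bY}$ is tracked through the variance-normalized anti-concentration bound.
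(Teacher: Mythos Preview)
Your proposal is correct and is essentially the proof given by Chernozhukov, Chetverikov and Kato themselves; in the present paper, however, this lemma is not proved at all but is simply quoted verbatim as Theorem~2 of \cite{chernozhukov2014comparison}. So relative to the paper there is nothing to compare: you have supplied (an accurate sketch of) the original CCK argument where the paper merely cites it.
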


\section*{Acknowledgements}
The authors sincerely thank the Editor, Associate Editor, and an anonymous referee for their valuable comments and suggestions. A part of this work was carried out when Fang Han was visiting Department of Biostatistics at Johns Hopkins University.

\end{document}